\documentclass[11pt]{amsart}

\usepackage[margin=1.1in]{geometry}
\usepackage{amsmath,amssymb,amsthm,amsfonts,mathtools}
\usepackage{enumitem}
\usepackage{mathrsfs}
\usepackage{stmaryrd}
\usepackage{hyperref}
\usepackage[capitalise,nameinlink]{cleveref}
\usepackage{verbatim}

\hypersetup{
  colorlinks=true,
  linkcolor=blue,
  citecolor=blue,
  urlcolor=blue,
  pdftitle={Elliptic matroids and modular curves},
  pdfauthor={Matthew Baker}
}

\theoremstyle{plain}
\newtheorem{theorem}{Theorem}[section]
\newtheorem{proposition}[theorem]{Proposition}
\newtheorem{lemma}[theorem]{Lemma}
\newtheorem{corollary}[theorem]{Corollary}

\theoremstyle{definition}
\newtheorem{definition}[theorem]{Definition}
\newtheorem{remark}[theorem]{Remark}

\newcommand{\C}{\mathbb{C}}
\newcommand{\F}{\mathbb{F}}
\newcommand{\G}{\mathbb{G}}
\newcommand{\N}{\mathbb{N}}
\newcommand{\PP}{\mathbb{P}}
\newcommand{\Q}{\mathbb{Q}}
\newcommand{\T}{\mathbb{T}}
\newcommand{\Z}{\mathbb{Z}}
\newcommand{\sm}{\mathrm{sm}}

\newcommand{\rk}{\mathrm{rk}}

\newcommand{\Pic}{\operatorname{Pic}}
\newcommand{\Spec}{\operatorname{Spec}}
\newcommand{\Hom}{\operatorname{Hom}}

\newcommand{\GL}{\operatorname{GL}}
\newcommand{\PGL}{\operatorname{PGL}}

\newcommand{\cO}{\mathcal O}
\newcommand{\cC}{\mathcal C}

\newcommand{\cR}{\mathcal R}
\newcommand{\bR}{\mathfrak R}
\newcommand{\cT}{\mathcal T}
\newcommand{\Fpm}{\F_1^{\pm}}
\newcommand{\Band}{\mathbf{Band}}
\newcommand{\Past}{\mathbf{Past}}
\newcommand{\etale}{\'etale}

\DeclareMathOperator{\mdeg}{mdeg}

\title[Elliptic matroids and modular curves]{Elliptic matroids and modular curves}
\author{Matthew Baker}
\date{\today}

\thanks{We thank Oliver Lorscheid for helpful discussions related to the Appendix. 
This work was partially supported by NSF grant DMS2154224 and a Simons Fellowship in Mathematics (1037306, Baker).}

\begin{document}

\begin{abstract}
For $n\ge4$, let $\cT_n$ be the rank-$3$ matroid on $\Z/n\Z$ whose
bases are the three-element non-zero-sum subsets.  Let
$X_1(n)^\circ$ denote the open subscheme of the modular curve $X_1(n)$
obtained by removing the cusps corresponding to reducible N\'eron
polygons.  For $n\ge10$, we give a purely algebraic and
incidence-theoretic proof that, for every field $k$ with
$\operatorname{char}(k)\nmid n$, there is a natural bijection between
$X_1(n)^\circ(k)$ and rescaling classes of $k$-realizations of $\cT_n$.
For $k=\C$, this recovers a theorem of Borisov and Roulleau.

We then upgrade the field-valued correspondence to an isomorphism of
schemes over $\Z[1/n]$. 
The main new ingredient is a deformation-theoretic argument
which allows us to verify the isomorphism on points valued in Artinian
local rings.  As consequences, the modular curve $X_1(n)^\circ$ acquires
a natural model over $\Z[1/n]$ as a matroid realization space, and, for
primes $p\ge11$, the non-representability of $\cT_p$ over $\Q$ is
equivalent to the prime-order case of Mazur's celebrated theorem on rational
torsion points of elliptic curves.

In an appendix, we explain how to upgrade the realization space of a matroid
from an affine scheme over $\Z$ to an affine band scheme (in the sense of Baker--Jin--Lorscheid)
over $\F_1^\pm$.
\end{abstract}

\maketitle
\setcounter{tocdepth}{1}
\tableofcontents

\section{Introduction}

\subsection{Statement of the main theorems}

Fix an integer $n\ge4$.  The \emph{elliptic matroid} $\cT_n$ is the
rank-$3$ matroid on the ground set $\Z/n\Z$ whose non-bases are the
three-element subsets $\{a,b,c\}\subset\Z/n\Z$
of distinct elements satisfying $a+b+c=0$ in $\Z/n\Z$.
The terminology comes from a natural construction using torsion points on
elliptic curves.

Let $E$ be an elliptic curve over a field $k$, with identity element $O$, and
let $P\in E(k)$ be a point of exact order $n$.  The complete linear system
$|3O|$ gives a closed immersion $\iota:E\hookrightarrow\PP^2_k$
as a smooth plane cubic, with $O$ mapping to a flex point.  For
$i\in\Z/n\Z$, set $P_i:=\iota(iP)$.

The chord--tangent law says that, for distinct $a,b,c\in\Z/n\Z$,
\[
\begin{aligned}
P_a,P_b,P_c\text{ are collinear}
&\quad\Longleftrightarrow\quad
        aP+bP+cP=O \\
&\quad\Longleftrightarrow\quad
        a+b+c=0\quad\text{in }\Z/n\Z.
\end{aligned}
\]
Thus the labelled configuration $(P_i)_{i\in\Z/n\Z}$ is a realization of
$\cT_n$ over $k$.

Choose nonzero homogeneous lifts $v_i\in k^3$ of the points $P_i$.  The
resulting $3\times n$ matrix represents the same realization if it is changed
by left multiplication by an element of $\GL_3(k)$ or if its columns are
independently multiplied by nonzero scalars.  More precisely, two matrices
$(v_i)$ and $(v_i')$ are \emph{rescaling equivalent} if
\[
        v_i'=\lambda_i g(v_i)
        \qquad(i\in\Z/n\Z)
\]
for some $g\in\GL_3(k)$ and some $\lambda_i\in k^\times$.  Changing the basis
of $H^0(E,\cO_E(3O))$, and hence the projective coordinates on the plane,
changes the matrix by left multiplication; changing the chosen homogeneous
lifts changes its columns by independent scalars.  Consequently the
rescaling class of the realization depends only on the pair $(E,P)$, not on
these choices.  Equivalently, any two plane cubic embeddings for which $O$
is a flex differ, after identifying their complete linear systems with
$|3O|$, by a projective linear transformation.

The same construction applies to an irreducible nodal plane cubic $C$.  If
$O,P\in C^{\sm}(k)$, if $O$ is chosen as the identity of the group
$C^{\sm}$, if $P$ has exact order $n$, and if $\cO_C(1)\cong\cO_C(3O)$,
then the points $P_i=iP$ again realize $\cT_n$.  Let $X_1(n)^\circ$ denote
the open locus of the compactified modular curve $X_1(n)$ on which the
universal generalized elliptic curve is irreducible.  Its geometric fibers
are smooth elliptic curves and N\'eron $1$-gons, the latter admitting an
embedding by $|3O|$ as irreducible nodal plane cubics.  If
$R_{\cT_n}(k)$ denotes the set of rescaling classes of $k$-realizations of
$\cT_n$, the preceding construction therefore gives a natural map
\[
        \beta_k:X_1(n)^\circ(k)\longrightarrow R_{\cT_n}(k).
\]

A remarkable theorem of Borisov and Roulleau asserts that, when $n\ge10$ and
$k=\C$, every rescaling class of realizations of $\cT_n$ arises in this way
\cite{BR}.  More precisely, the map $\beta_\C$ is bijective for all $n \geq 10$.
Their proof makes essential use of complex-analytic methods (e.g. weight-one modular forms).  
Our first main result extends this set-theoretic bijection to every field of characteristic prime to $n$.

\begin{theorem}[Field-valued Borisov--Roulleau correspondence]
\label{thm:intro-field}
Let $n\ge10$, and let $k$ be a field with
$\operatorname{char}(k)\nmid n$.  Then
\[
        \beta_k:X_1(n)^\circ(k)\xrightarrow{\sim}R_{\cT_n}(k)
\]
is a bijection.
\end{theorem}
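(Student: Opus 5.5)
We prove injectivity and surjectivity separately. The key input is the classical fact that nine points on a plane cubic in general position, together with enough collinearities, force the cubic.

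\textbf{Injectivity.} Suppose $(E,P)$ and $(E',P')$ give rescaling-equivalent realizations. After a projective transformation we may assume $P_i=P'_i$ for all $i\in\Z/n\Z$. Since $n\ge10$, these are at least ten distinct points lying on both cubics $\iota(E)$ and $\iota'(E')$. We will show that the cubic through them is unique. Concretely, $n\ge 10$ points of a realization of $\cT_n$ impose independent conditions on cubics: the space of cubics is $10$-dimensional, and we exhibit $9$ of the $P_i$ imposing independent conditions. Two distinct cubics would share at least $10$ points, so by B\'ezout they have a common component. The cubic $\iota(E)$ is irreducible (a smooth cubic or a nodal $1$-gon), hence the two cubics coincide. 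The identity $O$ equals $P_0$ and $P=P_1$, so $(E,P)\cong(E',P')$. The group law determines the labelling, so the isomorphism class in $X_1(n)^\circ(k)$ agrees. Over non-closed $k$ the point is a $k$-isomorphism class, and the argument works over $k$ directly.

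\textbf{Surjectivity.} This is the main content. Given a realization $(v_i)$ of $\cT_n$ over $k$, first normalize by $\PGL_3$ so that $P_0,P_1,P_{-1}$ and one further point are in standard position. We then want to produce a cubic $C$ through all $P_i$ such that $P_i$ corresponds to $iP$. The plan is as follows.

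\begin{enumerate}[label=(\roman*)]
\item Pick nine of the points, say $P_0,\dots,P_8$ (suitably chosen), and let $C$ be a cubic through them. The space of such cubics is at least one-dimensional, and we will show it is exactly one-dimensional using the non-collinearities of $\cT_n$.
\item Use the collinearity relations $a+b+c=0$ and the Cayley--Bacharach theorem (the $9$-point theorem for pencils of cubics) inductively to show that every $P_i$ lies on $C$. Apply Cayley--Bacharach to the union of three lines $\{P_a,P_b,P_{-a-b}\}$ versus another triple of lines: eight known points on $C$ force the ninth.
\item Show that $C$ is irreducible, with the $P_i$ smooth points and $P_0$ a flex. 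For reducibility, a line component would contain many $P_i$, contradicting the fact that in $\cT_n$ a triple is collinear only if it sums to zero; a conic component is handled similarly, by counting the incidences forced among points on it. A cuspidal cubic is excluded because its smooth group is $\mathbb{G}_a$, and the points $P_i$ would then give $n$-torsion, impossible when $\operatorname{char}k\nmid n$. For the flex: $P_0$ is a flex because the line $P_aP_{-a}$ meets $C$ a third time at $P_0$ for every $a$; taking $C$ with the group law based at $P_0$, the chord--tangent rule gives $P_a+P_b=P_{a+b}$.
\item Conclude that $i\mapsto P_i$ is a homomorphism $\Z/n\Z\to C^{\sm}(k)$ that is injective, since the points are distinct. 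Hence $P_1$ has exact order $n$, and $(C,P_0,P_1)$ defines a point of $X_1(n)^\circ(k)$ mapping to the given class.
\end{enumerate}

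\textbf{Main obstacle.} We expect step (ii)/(iii) to be the hardest: showing that the cubic through nine well-chosen points contains all the others, with no degenerate configurations. This is where $n\ge10$ enters, since small $n$ admit extra realizations such as the Hesse configuration. We will first try an explicit coordinate computation. Normalize $P_0,P_1,P_2,P_{-1}$ appropriately; then the collinearities determine $P_i$ recursively as rational functions in two parameters, and we check that a Tate normal form $y^2+(1-c)xy-by=x^3-bx^2$ passes through them. The final relation $P_n=P_0$ then becomes the modular equation. This computation should be characteristic-free away from $n$.
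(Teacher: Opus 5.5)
Your injectivity argument is the paper's (two distinct irreducible cubics share at most nine points by B\'ezout), and your outline of surjectivity follows the same overall route. However, the step that makes Chasles propagation possible is missing.

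In (i), uniqueness of $C$ cannot come from non-collinearities, and an arbitrary nine-point choice fails. A Chasles step needs eight points of a zero-sum grid already on $C$, hence two complete collinear rows. But $P_0,\dots,P_8$ contain no zero-sum triple once $n\ge22$, since every sum lies in $[3,21]$. So $\cT_n$ restricted to them is uniform, nothing can propagate, and the matroid does not even prevent them from lying on a conic. If instead you take a whole grid such as $P_{-4},\dots,P_4$, it is the complete intersection of two triangles $D_1,D_2$. It imposes only eight conditions, so the cubics through it form the pencil $\langle D_1,D_2\rangle$ with no distinguished member.

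The missing idea is this. First prove, by B\'ezout and the fact that no line carries four marked points, that $D_1\cap D_2$ is exactly these nine reduced points, so the cubics through them are exactly $\langle D_1,D_2\rangle$. Then use a tenth point $P_5\notin D_1\cap D_2$ to cut the pencil down to a unique cubic; this is where $n\ge10$ enters. You then still need explicit grids with nine distinct entries that reach every residue. The paper uses $\Gamma(-5,0,1,3)$, then $\pm\Gamma(0,1,2,r-2)$, and a separate grid for the half-period when $n$ is even.

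Your flex argument in (iii) is also invalid. Through any point $Q$ of a cubic, lines pair up the residual intersection points, so the collinearity of $P_0,P_a,P_{-a}$ does not make $P_0$ a flex. With origin $P_0$, collinear marked triples satisfy $x_a+x_b+x_c=\lambda$, where $\lambda=[H-3P_0]$ and $H$ is a line section. The triples $(0,a,-a)$ give only $x_{-a}=\lambda-x_a$, which is consistent with any $\lambda$.

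Similarly, the collinearities of $\cT_n$ yield $x_{a+b}=x_a+x_b$ only when $\{a,b,-a-b\}$ and $\{0,a+b,-a-b\}$ both consist of distinct residues. The pair $(a,-a)$ is exactly the flex condition, and a pair like $(1,1)$ involves a tangent line about which $\cT_n$ says nothing. So ``$i\mapsto P_i$ is a homomorphism'' is unjustified. These relations have to be extracted by consistency:
the triples $(0,i,-i)$ for $i=1,2,3$, together with $(-1,-2,3)$ and $(1,2,-3)$, give $x_3=x_1+x_2-\lambda$ and then $2\lambda=\lambda$, i.e.\ $\mathcal O_C(1)\cong\mathcal O_C(3P_0)$;
computing $x_5$ in two ways (through $(-1,-3,4)$, $(0,4,-4)$, $(-1,-4,5)$, and through $(-2,-3,5)$) gives $x_2=2x_1$;
an induction, with separate treatment of the half-period and of $nx_1=0$, finishes.

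Smaller points:
you never show the $P_i$ are smooth points of $C$ (use a line through $P_i$ and two other marked points);
in the reducibility step it is the conic, not the line, that carries many marked points, and you need a zero-sum triple avoiding the at most three indices on the line, with separate counting for $n=10,11$;
$P_0,P_1,P_{-1}$ are collinear, so they cannot be part of a standard frame;
the Tate-normal-form fallback is a hope rather than an argument.
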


The hypothesis $n\ge10$ is partly an artifact of our method. For $n\le6$, 
the realization space of $\cT_n$ over $k$ is a point, while $X_1(n)^\circ$ is a curve, so the conclusion of \Cref{thm:intro-field} 
genuinely fails in this case.
For $7\le n\le9$, it turns out that $\beta_k$ is bijective even though our reconstruction method breaks down in that case.
See \Cref{sec:small-n} for details.

In contrast with the Borisov--Roulleau proof over $\C$, our proof of
Theorem~\ref{thm:intro-field} is entirely algebraic and
incidence-theoretic and does not involve modular forms or computer-assisted calculations.  
It uses only elementary projective geometry,
B\'ezout's theorem, Chasles' theorem, and the group law
on the smooth locus of a nonsingular or nodal plane cubic curve.  The use of
Chasles propagation is inspired by the role of such configurations
in Green and Tao's work on point sets with few ordinary lines \cite{GT}.

Over $\C$, Borisov and Roulleau prove more than a bijection on closed points:
they endow the realization space with its natural affine algebraic-curve
structure and show that the modular map is an isomorphism of affine
algebraic curves \cite{BR}.  We generalize this result as follows.

Both matroid realization spaces and modular curves have natural models over $\Z$.
For a matroid $M$, its Pl\"ucker ring\footnote{We note that the Pl\"ucker ring of a matroid is a multigraded refinement of White's \emph{bracket ring} \cite{White}, localized at the basis brackets.} 
$S_M$ has a natural multigrading, and we define
\[
        A_M:=(S_M)_0,
        \qquad
        \cR_M:=\Spec A_M.
\]
For $M=\cT_n$, we put
\[
        \cR_n
        :=
        \cR_{\cT_n}\times_{\Spec\Z}\Spec\Z[1/n].
\]
On the modular side, the standard integral model of $X_1(n)$ is smooth over
$\Z[1/n]$; see \cite{DR,KM,Conrad-KM}.  The construction above globalizes to
a morphism of $\Z[1/n]$-schemes $\beta:X_1(n)^\circ\longrightarrow\cR_n$.
Our second main result is the corresponding scheme-theoretic extension of
the Borisov--Roulleau theorem.

\begin{theorem}[Scheme-theoretic Borisov--Roulleau correspondence]
\label{thm:main}
Let $n\ge10$.  The modular morphism 
\[
\beta:X_1(n)^\circ\longrightarrow\cR_n
\]
is an isomorphism of schemes over $\Z[1/n]$.
\end{theorem}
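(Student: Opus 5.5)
We will deduce \Cref{thm:main} from \Cref{thm:intro-field} together with a deformation-theoretic comparison. The guiding principle is the standard criterion that a morphism of schemes locally of finite type over a noetherian base is an isomorphism if it is bijective on points with values in every field and étale. A slightly stronger and more convenient form is the following: $\beta$ is an isomorphism as soon as it is a monomorphism that is bijective on $k$-points and formally étale. We will verify these conditions in a functorial way, using points valued in Artinian local rings.

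\textbf{Step 1: finiteness and reduction.} Both $X_1(n)^\circ$ and $\cR_n$ are of finite type over $\Z[1/n]$. The source is smooth of relative dimension $1$, since the standard integral model of $X_1(n)$ is smooth over $\Z[1/n]$ and $X_1(n)^\circ$ is open in it. The target is affine, since $\cR_n=\Spec(A_{\cT_n}\otimes\Z[1/n])$. By \Cref{thm:intro-field}, $\beta$ is bijective on $k$-points for every field $k$ with $\operatorname{char}(k)\nmid n$, so in particular $\beta$ is universally bijective. It therefore suffices to show that for every Artinian local $\Z[1/n]$-algebra $A$ with residue field $k$, the map
\[
X_1(n)^\circ(A)\longrightarrow \cR_n(A)
\]
is bijective. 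Granting this, $\beta$ is formally étale and radicial, hence étale and injective on geometric points. An étale, universally injective, surjective morphism is an open immersion that is surjective, hence an isomorphism.

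\textbf{Step 2: Artinian points via deformations.} We argue by induction along small extensions $0\to I\to A'\to A\to 0$ with $\mathfrak m_{A'}I=0$. The aim is to show that, over a fixed $A$-point, $\beta$ induces a bijection between the sets of lifts to $A'$. On the modular side, the lifts form a torsor under $T_{X}\otimes I$ at the closed point, because $X_1(n)^\circ$ is smooth; this space is $1$-dimensional over $k$. On the realization side, we need two things. First, the tangent space of $\cR_n$ at a realization coming from $(E,P)$ must be $1$-dimensional. Second, every $A$-realization must lift. To obtain both, we redo the incidence-theoretic reconstruction of \Cref{thm:intro-field} with coefficients in $A$, as follows.
\begin{enumerate}[label=(\roman*)]
\item Normalize the frame by applying $\GL_3(A)$ and rescaling, so that four suitable points, such as $P_0,P_1,P_2,P_{-3}$, become standard. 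This is possible because the corresponding brackets are units.
\item Use the collinearities $a+b+c=0$, together with B\'ezout's and Chasles' theorems, to show that all $P_i$ lie on a unique plane cubic over $A$. A cubic through $9$ points of a complete-intersection configuration is unique, and uniqueness persists over $A$ because the relevant linear systems have constant rank, detectable on the residue field.
\item Show that this cubic is a flat family of irreducible cubics that are smooth or nodal at the chosen points, with $P_0$ a flex. It then defines a generalized elliptic curve over $A$ with a point $P_1$ of exact order $n$, and this is an $A$-point of $X_1(n)^\circ$ inverting $\beta$.
\end{enumerate}
This construction gives $\beta^{-1}$ functorially on Artinian rings, and more generally on any ring over which the key brackets are units.

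\textbf{Main obstacle.} The difficult step is (ii) over non-reduced $A$. Chasles' theorem and the rank statements are proved over fields via B\'ezout's theorem, so we must show that the linear algebra deforms. Our first attempt will be to express each step of the reconstruction as the vanishing or invertibility of explicit minors, which are brackets in $A_{\cT_n}$. Invertibility of such a minor is checked on the residue field, where \Cref{thm:intro-field} already supplies it. Vanishing follows because the defining equations are Pl\"ucker relations holding in $A_M$. If this fails, the fallback is a tangent-space computation: show directly that $\dim_k T\cR_n=1$ at every $k$-point. Since $X_1(n)^\circ$ is smooth of relative dimension $1$ and $\beta$ is a bijection on points, this forces $\cR_n$ to be smooth of relative dimension $1$ and $\beta$ to be étale.
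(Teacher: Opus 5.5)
Your overall architecture is the paper's: the Artinian-point criterion (bijectivity on local Artinian points gives a formally \'etale, radicial, surjective morphism, hence an isomorphism), followed by a relative version of the incidence-theoretic reconstruction after normalizing a projective frame. The gap is in the step you yourself flag, namely the relative seed pencil and Chasles propagation over non-reduced $A$, which the proposal does not establish.

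First, the claim that a cubic through the nine points of a complete-intersection configuration is unique is false. Those nine points impose only eight conditions, so the cubics through them form the pencil $\langle D_1,D_2\rangle$, and uniqueness requires the tenth section $\widetilde P_5$.

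Second, your ``first attempt'' does not work as stated. That the reconstructed cubic $F$ contains a further section $\widetilde P_j$ means $F(\widetilde P_j)=0$, which is not a bracket. The field-valued Chasles theorem only shows that this quantity vanishes at every field-valued point, i.e.\ lies in the nilradical, which is exactly what is useless over non-reduced $A$.

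The missing argument is a rank comparison plus Nakayama. For a zero-sum grid, evaluation $H^0(\PP^2_A,\mathcal O(3))\to A^8$ at eight of the nine sections is surjective modulo $\mathfrak m_A$, because eight points of a reduced $3\times3$ grid impose independent conditions on cubics. Hence it is surjective, so the cubics through those eight sections form a free rank-two $A$-module compatible with base change. This module contains the row and column cubics $D_1,D_2$ (this is where the vanishing brackets enter), and these are independent modulo $\mathfrak m_A$. So the module equals $AD_1\oplus AD_2$, and every member vanishes on the ninth section.

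For the seed, the paper obtains the free module $AD_1\oplus AD_2$ of cubics through $\widetilde P_{-4},\dots,\widetilde P_4$ from the Koszul resolution of the relative complete intersection $D_1\cap D_2$. Evaluation at $\widetilde P_5$ is nonzero modulo $\mathfrak m_A$, hence surjective, and its kernel is free of rank one.

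Your fallback only relocates the problem. Computing $T\cR_n$ at every point is the dual-number case of the same deformation question, and you give no method for it. Moreover, equal tangent dimensions plus bijectivity on geometric points would not by themselves give \'etaleness in characteristic $p$, as relative Frobenius shows.

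Step (iii) is also too thin exactly where invertibility of $\beta$ is decided. You need $\widetilde P_i=i\widetilde P_1$ as $A$-sections and $\mathcal O_{\mathcal C}(1)\cong\mathcal O_{\mathcal C}(3\widetilde P_0)$, not just a flat smooth-or-nodal cubic with ``$P_0$ a flex'', a condition with no direct meaning over non-reduced $A$. The paper gets these in three steps:
\begin{itemize}
\item It uses Deligne--Rapoport to make $\mathcal C$ a generalized elliptic curve with $\mathcal C^{\sm}\cong\Pic^0_{\mathcal C/A}$.
\item Each zero-sum triple lies on the relative line given by the unimodular vector $v_a\times v_b$, and hence satisfies $x_a+x_b+x_c=\lambda:=[H-3\widetilde P_0]$ in $\Pic^0_{\mathcal C/A}(A)$.
\item Rerunning the purely group-theoretic window computation then gives $\lambda=0$ and $x_i=ix_1$.
\end{itemize}
Exact order $n$ and the $\Gamma_1(n)$-structure follow because $\mathcal C^{\sm}[n]$ is finite \'etale and the special fibre is already known.

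Finally, step (i) tacitly identifies an $A$-point of $\cR_n=\Spec A_{\cT_n}[1/n]$ with a configuration of $A$-sections. That identification is the normalized-frame-slice isomorphism between $A_{\cT_n}$ and the coordinate ring of the normalized chart, and it needs proof.
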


We do not work with $\Z[1/n]$-schemes, rather than $\Z$-schemes, merely for the sake of convenience: the situation over $\Z$ is genuinely more complicated, cf.~\Cref{rem:why-invert-n}.

The ordinary scheme $\cR_M$ is itself the shadow of a more general object.  
Matroid realization spaces are naturally defined as band
schemes over the initial band $\Fpm$.  In Appendix~\ref{sec:bands} we
construct a canonical realization band $R_M$ and a corresponding band scheme $\bR_M:=\Spec R_M$
whose associated scheme is $\cR_M$.  

\subsection{Overview of the proofs}

We next describe the main ideas of the proofs.  For
Theorem~\ref{thm:intro-field}, start with an arbitrary realization
$(P_i)_{i\in\Z/n\Z}\subset\PP^2(k)$
of $\cT_n$.  Six of the prescribed three-point lines form two reducible
cubics whose scheme-theoretic intersection consists of nine marked points.
The cubics through these points form a pencil, and the tenth marked point
selects a unique member $C$ of this pencil.  The central step is then a
Chasles propagation argument.  A $3\times3$ array of distinct
indices whose row sums and column sums are zero gives two reducible cubics,
one formed by its row-lines and the other by its column-lines.  The
classical Chasles theorem implies that any cubic through eight of
the nine associated marked points also contains the ninth.  Applying this
observation to a carefully chosen sequence of zero-sum grids propagates $C$
through the entire realization.

Once all marked points lie on $C$, elementary combinatorial and projective geometry
arguments show that $C$ is irreducible and that every marked point is smooth.
We then equip the smooth locus $C^{\sm}$ with the group law having identity $P_0$.  The
collinearity relations give a finite system of equations in
$\Pic^0(C)$; solving this system shows that
\[
        P_i=iP_1,
        \qquad
        \cO_C(1)\cong\cO_C(3P_0),
\]
and that $P_1$ has exact order $n$.  The assumption
$\operatorname{char}(k)\nmid n$ rules out a cuspidal cubic, since the smooth
locus of a cuspidal cubic becomes $\G_a$ after base change and therefore has
no nonzero $n$-torsion.  This reconstructs the required smooth or nodal
marked cubic.  

The proof of Theorem~\ref{thm:main} extends these reconstruction arguments
from fields to local Artinian rings.  The motivating principle is the
following criterion: a finitely presented morphism between Noetherian schemes over an affine base
$\Spec R$ is an isomorphism if and only if it induces a
bijection on $A$-valued points for every local Artinian $R$-algebra $A$.

Accordingly, let $A$ be an arbitrary local Artinian $\Z[1/n]$-algebra.
Using a distinguished projective frame, an $A$-valued point of $\cR_n$ has
a unique normalized representative.  
The seed-pencil construction extends
to this relative setting, and a relative Chasles argument, using
base change and Nakayama's lemma, propagates the relative cubic through all
of the marked sections.  
We conclude that
$\beta(A):X_1(n)^\circ(A)\xrightarrow{\sim}\cR_n(A)$
is an isomorphism for every such $A$, and the Artinian-point criterion gives
Theorem~\ref{thm:main}.  

\subsection{Structure of the paper}

The paper is divided into two parts with deliberately different prerequisites.  Part~\ref{part:field} contains the elementary field-valued
reconstruction and can be read by a reader with only a rudimentary working knowledge of matroids and algebraic geometry.  
Section~\ref{sec:setup} defines $\cT_n$ and its realization
classes, while Section~\ref{sec:well-defined} introduces marked plane cubics
and the map $\beta_k$.  Section~\ref{sec:seed} constructs the seed pencil and
the zero-sum-grid form of Chasles completion.  Sections
\ref{sec:odd-propagation} and~\ref{sec:even-propagation} carry out the
propagation, Section~\ref{sec:irreducible-smooth} proves irreducibility and
smoothness at the marked points, and Section~\ref{sec:grouplaw} recovers the
group law.  Section~\ref{sec:field-valued-bijection-proof} completes the
proof for $n\ge12$, and Section~\ref{sec:small-levels} treats the special cases $n=10,11$. 

Part~\ref{part:scheme} establishes the scheme-theoretic correspondence, and here the algebraic geometry prerequisites are notably higher.
Section~\ref{sec:schemes} constructs the ordinary realization scheme of a
matroid, and Section~\ref{sec:canonical-Tn} identifies the elliptic
realization scheme with a normalized frame slice.  Section
\ref{sec:modular-morphism} defines $X_1(n)^\circ$ scheme-theoretically and
constructs the modular morphism.  Section~\ref{sec:normalization} states the
Artinian-point criterion and compares points of $\cR_n$ with normalized
configuration deformations.  Section~\ref{sec:artinian-reconstruction}
proves the relative seed-pencil, propagation, and group-law results and then
deduces the global isomorphism.
Section~\ref{sec:complement} discusses a connection with
Mazur's theorem on rational torsion points on elliptic curves.  

Finally, Appendix~\ref{sec:bands} constructs the canonical
band-scheme model for the realization space of a matroid $M$, compares it with the foundation of $M$, 
and identifies its tropical points with the reduced Dressian of $M$. This perspective will be utilized in the subsequent work \cite{BES}.

\subsection{Statement on AI usage}
\label{sec:AI-usage} 

The author acknowledges the use of AI during the preparation of this manuscript. In fact, it would be fair to characterize this entire project as a collaboration with ChatGPT 5.5 Pro.
I came up with the basic algebraic and combinatorial strategy behind \Cref{thm:intro-field} myself after meditating on the papers by Borisov--Roulleau \cite{BR} and Green--Tao \cite{GT}, but GPT-5.5 came up with the precise $3\times 3$ zero-sum grid formalism which powers the combinatorial propagation argument as described below. Inspired by Borisov and Roulleau's proof that the complex realization space of $\cT_n$ is smooth, I also had the idea to enhance \Cref{thm:intro-field} to a scheme-theoretic isomorphism over $\Z[1/n]$ by working with Artinian deformations, but the technical scheme-theoretic arguments in Part 2 were worked out almost entirely by GPT-5.5.
Portions of this manuscript, including the majority of Part 2, were initially drafted by GPT-5.5 and then proofread, revised, and checked line-by-line by the author. Claude Fable 5 and GPT-5.6 Pro provided additional proofreading, as well as helpful references for the scheme-theoretic arguments and background material on modular curves.
The author has reviewed and takes full responsibility for all content in this paper.

\part{The field-valued correspondence}\label{part:field}

\section{The elliptic matroid and its realization space}\label{sec:setup}

In this section we define the family of matroids $\cT_n$ and the corresponding realization spaces $R_{\cT_n}(k)$.

\subsection{The elliptic matroid \texorpdfstring{$\cT_n$}{Tn}}

Fix an integer $n\ge 4$.

\begin{definition}
The \emph{elliptic matroid} $\cT_n$ is the rank-$3$ matroid on the ground set $\Z/n\Z = \{ 0,1,\ldots, n-1 \}$ whose non-bases of size 3 are the $3$-element subsets
$\{a,b,c\}\subset \Z/n\Z$ such that
\[
a+b+c = 0 \text{ in } \Z/n\Z.
\]
\end{definition}

The definition does indeed give a matroid: two distinct zero-sum triples cannot
have a pair of elements in common, and hence the zero-sum triples are the
circuit-hyperplanes of a rank-$3$ sparse paving matroid.

\subsection{Realizations and rescaling classes}

Let $M$ be a simple rank-$3$ matroid on a finite ground set $E$, and let $k$ be a field.

\begin{definition}
A \emph{realization} of $M$ over $k$ is a collection of nonzero vectors
\[
(v_e)_{e\in E}\in (k^3\setminus\{0\})^E
\]
such that, for every $3$-element subset $\{e_1,e_2,e_3\}\subset E$, the vectors $v_{e_1},v_{e_2},v_{e_3}$
are linearly dependent if and only if $\{e_1,e_2,e_3\}$ is a non-basis of $M$.
\end{definition}

\begin{definition}
Two realizations
\[
(v_e)_{e\in E}
\qquad\text{and}\qquad
(v'_e)_{e\in E}
\]
of $M$ over $k$ are \emph{rescaling-equivalent} if there exist
\[
g\in \operatorname{GL}_3(k)
\qquad\text{and}\qquad
(\lambda_e)_{e\in E}\in (k^\times)^E
\]
such that
\[
v'_e=\lambda_e\, g(v_e)
\qquad\text{for all } e\in E.
\]
We write $R_M(k)$ for the set of rescaling-equivalence classes of realizations of $M$ over $k$.
\end{definition}

Equivalently, a realization of $M$ over $k$ may be represented by a
$3\times|E|$ matrix with entries in $k$; rescaling equivalence is generated by left
multiplication by $\GL_3(k)$ and independent rescaling of its columns.
We use the terms \emph{realization} and \emph{representation} synonymously throughout the paper.

\begin{remark}
Geometrically, a realization of $\cT_n$ over $k$ is the same thing as a collection of points
\[
(P_i)_{i\in \Z/n\Z}\subset \PP^2(k)
\]
such that three distinct points $P_a,P_b,P_c$ are collinear if and only if
\[
a+b+c=0 \qquad\text{in }\Z/n\Z.
\]
The set $R_{\cT_n}(k)$ records such configurations up to projective linear automorphisms of $\PP^2$ and independent rescaling of the chosen homogeneous coordinates.
\end{remark}

\begin{lemma}\label{lem:marked-points-distinct}
Let $n\ge 4$, and let
\[
(P_i)_{i\in \Z/n\Z}\subset \PP^2(k)
\]
be a realization of $\cT_n$. Then the points $P_i$ are pairwise distinct.
\end{lemma}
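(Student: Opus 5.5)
Suppose for contradiction that $P_a=P_b$ for some distinct $a,b\in\Z/n\Z$, so that $v_a$ and $v_b$ are proportional. Then for every $c\notin\{a,b\}$ the vectors $v_a,v_b,v_c$ are linearly dependent. By the definition of a realization, every such $\{a,b,c\}$ must then be a non-basis of $\cT_n$, that is, $a+b+c=0$ in $\Z/n\Z$ for every $c\neq a,b$.

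The condition $a+b+c=0$ determines $c$ uniquely, namely $c=-(a+b)$. So at most one $c\in\Z/n\Z\setminus\{a,b\}$ can satisfy it. Since $n\ge4$, the set $\Z/n\Z\setminus\{a,b\}$ has at least two elements, so some $c$ fails the condition. The set $\{a,b,c\}$ is then a basis of $\cT_n$, yet $v_a,v_b,v_c$ are dependent. This contradicts the definition of a realization, so the points $P_i$ are pairwise distinct.

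There is no real obstacle here. The only point to check is that the counting uses $n\ge4$: we need two elements outside $\{a,b\}$, which is exactly the bound $n-2\ge2$. This is the same observation that makes $\cT_n$ simple.
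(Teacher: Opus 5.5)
Your proof is correct and essentially matches the paper's argument. The paper directly chooses $c\notin\{a,b,-a-b\}$, which is possible since $n\ge 4$. The dependence of the columns $v_a,v_b,v_c$ then forces $a+b+c=0$, a contradiction; this is just your counting step phrased constructively.
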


\begin{proof}
Suppose $P_a=P_b$ with $a\ne b$. Choose
$c\in \Z/n\Z$ distinct from $a$, $b$, and $-a-b$; this is possible because $n\ge 4$. In homogeneous coordinates, the two columns representing $P_a$ and $P_b$ are proportional, so the three columns indexed by $a,b,c$ are linearly dependent. Hence $\{a,b,c\}$ is a non-basis of $\cT_n$, so $a+b+c=0$, contradicting the choice of $c$.
\end{proof}

\section{Marked plane cubics and the map \texorpdfstring{$\beta$}{beta}}
\label{sec:well-defined}

We now define the source of the map $\beta$ concretely in terms of plane cubics.

\subsection{Smooth and nodal marked cubics}

Let $k$ be a field, and let $E\subset \PP^2_k$ be an irreducible plane cubic.

If $E$ is smooth, then $E(k)$ carries the usual elliptic curve group law after choosing an identity element. 
If $E$ is nodal, then $E^\sm$ is a one-dimensional algebraic
torus over $k$ once an identity point is chosen; it becomes
isomorphic to $\G_m$ after base change to an algebraic
closure (see e.g. \cite{ART}).  We write the group law additively in both cases.

We first define the relevant triples; $X_1(n)^\circ(k)$ will then be the set of equivalence classes of such triples under the isomorphisms of Definition~\ref{def:marked-cubic-isomorphism}.

\begin{definition}
Let $n\ge 3$. A \emph{marked plane cubic of level $n$ over $k$} is a triple $(E,O,P)$ such that:
\begin{enumerate}[label=(\roman*)]
\item $E\subset \PP^2_k$ is an irreducible plane cubic which is either smooth or nodal;
\item $O\in E^\sm(k)$ satisfies
\[
\mathcal O_E(1)\cong \mathcal O_E(3O);
\]
\item $P\in E^\sm(k)$ has exact order $n$ in the group law on $E^\sm$ with identity $O$.
\end{enumerate}
\end{definition}

\begin{remark}
If $E$ is smooth, the condition $\mathcal O_E(1)\cong \mathcal O_E(3O)$
is equivalent in every characteristic to saying that $O$ is a flex of the given plane embedding, meaning that the tangent line at $O$ meets $E$ with multiplicity $3$.  
We use the line-bundle formulation because it also makes sense in the nodal case.
\end{remark}

\begin{definition}\label{def:marked-cubic-isomorphism}
An \emph{isomorphism}
\[
(E,O,P)\xrightarrow{\sim}(E',O',P')
\]
of marked plane cubics is a projective linear automorphism
$\varphi\in \operatorname{PGL}_3(k)$ such that
\[
\varphi(E)=E',\qquad \varphi(O)=O',\qquad \varphi(P)=P'.
\]
We define $X_1(n)^\circ(k)$ to be the set of isomorphism classes of marked plane cubics of level $n$ over $k$.
\end{definition}

\begin{remark}\label{rem:X1-open}
The notation $X_1(n)^\circ(k)$ reflects a modular interpretation that will be
made scheme-theoretic in Part~\ref{part:scheme}.  There we define an open
subscheme $X_1(n)^\circ\subset X_1(n)$ whose geometric fibers are smooth
elliptic curves and irreducible nodal generalized elliptic curves, and we show
that its field-valued points agree with the concrete projective isomorphism
classes defined above.
\end{remark}

\subsection{The realization attached to a marked cubic}

Let $(E,O,P)\in X_1(n)^\circ(k)$. For each $i\in \Z/n\Z$, define
\[
P_i:=iP\in E^\sm(k)\subset \PP^2(k).
\]
Choose nonzero homogeneous coordinates for each $P_i$, and let $A(E,O,P)$ be the resulting $3\times n$ matrix.
The following proposition shows that this produces a well-defined element of $R_{\cT_n}(k)$.

\begin{proposition}\label{prop:beta-welldefined}
Let $(E,O,P)\in X_1(n)^\circ(k)$. Then the points $(P_i)_{i\in \Z/n\Z}$ realize the matroid $\cT_n$; equivalently, for distinct $a,b,c\in \Z/n\Z$ one has
\[
P_a,\ P_b,\ P_c \text{ collinear}
\qquad\Longleftrightarrow\qquad
a+b+c=0.
\]
In particular, the rescaling class of $A(E,O,P)$ gives a well-defined element
\[
\beta(E,O,P)\in R_{\cT_n}(k).
\]
\end{proposition}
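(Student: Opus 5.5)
The plan is to prove the chord--tangent criterion for the points $P_i=iP$ directly from the line-bundle condition $\mathcal O_E(1)\cong\mathcal O_E(3O)$, using divisors on the smooth locus, and then deduce well-definedness of the rescaling class.

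First I will check that the $P_i$ are pairwise distinct points of $E^\sm(k)$. This holds because $P$ has exact order $n$, so $iP\neq jP$ for $i\neq j$ in $\Z/n\Z$.

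Next, let $a,b,c$ be distinct. Suppose $P_a,P_b,P_c$ lie on a line $L$. The line $L$ cannot be a component of $E$, since $E$ is irreducible. Hence $L\cap E$ is a degree-$3$ effective Cartier divisor $D$ on $E$ with $\mathcal O_E(D)\cong\mathcal O_E(1)$. Since $D$ contains the three distinct smooth points, degree considerations force $D=P_a+P_b+P_c$. This divisor is supported on $E^\sm$, so it is Cartier. Then $\mathcal O_E(P_a+P_b+P_c)\cong\mathcal O_E(3O)$. Under the standard isomorphism $E^\sm(k)\cong\Pic^0(E)$, $Q\mapsto[\mathcal O_E(Q-O)]$, this says $aP+bP+cP=O$, so $a+b+c=0$ since $P$ has exact order $n$.

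The justification of this isomorphism is the main obstacle. For smooth $E$ it is Abel--Jacobi. For nodal $E$ I would cite the standard fact that the group law on $E^\sm$ with identity $O$ is the one transported from $\Pic^0(E)$ (e.g. \cite{ART}), and that it is injective on $k$-points.

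Conversely, suppose $a+b+c=0$. Take the line $L$ through the distinct points $P_a,P_b$, and write $L\cap E=P_a+P_b+R$. The residual point $R$ is a smooth $k$-point, because a line through two smooth points cannot meet the node with multiplicity, and a degree count handles the remaining case. Then $\mathcal O_E(P_a+P_b+R)\cong\mathcal O_E(3O)$ gives $R=-(a+b)P=cP=P_c$, so the three points are collinear. Dependence of the homogeneous vectors is exactly collinearity, so the matrix $A(E,O,P)$ realizes $\cT_n$.

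Finally, for well-definedness, changing the homogeneous lifts rescales columns. Replacing $(E,O,P)$ by an isomorphic triple via $\varphi\in\PGL_3(k)$ sends $iP$ to $i\varphi(P)$, since $\varphi$ preserves lines and hence the group law. Lifting $\varphi$ to $\GL_3(k)$, the new matrix is rescaling-equivalent to the old one.
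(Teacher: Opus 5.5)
Your proposal is correct and follows essentially the same route as the paper: both directions rest on $\mathcal O_E(H)\cong\mathcal O_E(3O)$ together with the identification $E^\sm(k)\cong\Pic^0(E)(k)$, $Q\mapsto[Q-O]$. Your residual-point argument for ``$a+b+c=0\Rightarrow$ collinear'' makes explicit a step the paper treats as a standard fact; the paper could also read it off from the isomorphism $H^0(\PP^2,\mathcal O(1))\cong H^0(E,\mathcal O_E(1))$ that it records. Your check that $\varphi\in\PGL_3(k)$ commutes with multiplication by $i$ is a harmless extra detail for well-definedness on isomorphism classes.
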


\begin{proof}
Let $H$ denote the divisor on $E$ cut out by a line in $\PP^2$. By assumption,
\[
\mathcal O_E(H)\cong \mathcal O_E(3O).
\]

Since $E$ is a plane cubic, the restriction map
\[
H^0(\PP^2,\mathcal O(1))\longrightarrow H^0(E,\mathcal O_E(1))
\]
is an isomorphism and the given plane embedding is the complete linear
system associated to $\mathcal O_E(1)\cong\mathcal O_E(3O)$.

Let $a,b,c\in \Z/n\Z$ be distinct, and suppose first that $a+b+c=0$. Then in the group law on $E^\sm$ with identity $O$ we have
\[
P_a+P_b+P_c=(a+b+c)P=0.
\]
For an irreducible plane cubic embedded by the complete linear system $|3O|$, three distinct smooth points are collinear if and only if their sum is zero in the group law. Hence $P_a,P_b,P_c$ are collinear.

Conversely, suppose $P_a,P_b,P_c$ are collinear. Then the line through them cuts out the divisor
$P_a+P_b+P_c$ on $E$, and this divisor is linearly equivalent to $H$. Since $H\sim 3O$, it follows that
$P_a+P_b+P_c\sim 3O$, or equivalently, $P_a+P_b+P_c=0$ in the group law on $E^\sm$. Since $P_i=iP$, this gives $(a+b+c)P=0$.

As $P$ has exact order $n$, we conclude that
\[
a+b+c=0 \qquad\text{in }\Z/n\Z.
\]

This proves the equivalence, and hence shows that the rescaling class of $A(E,O,P)$ defines an element of $R_{\cT_n}(k)$. It is clearly independent of the choice of homogeneous coordinates for the $P_i$, and changing the projective coordinates on $\PP^2$ only changes the representing matrix by left multiplication by an element of $\operatorname{GL}_3(k)$. Thus $\beta$ is well-defined.
\end{proof}

\subsection{The field-valued correspondence}\label{sec:mainresults}

We now state the field-valued form of the correspondence.
For every field $k$, recall that $R_{\cT_n}(k)$ denotes the set of rescaling classes of
$k$-realizations of the elliptic matroid $\cT_n$.

\begin{theorem}[Field-valued Borisov--Roulleau correspondence]\label{thm:field-valued-bijection}
Let $n\ge 10$, and let $k$ be a field with
$\operatorname{char}(k)\nmid n$.
Then the map
\[
\beta_k:X_1(n)^\circ(k)\longrightarrow R_{\cT_n}(k)
\]
is bijective.
\end{theorem}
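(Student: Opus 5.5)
We prove injectivity and surjectivity separately; surjectivity is the substantial part.

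\emph{Injectivity.} Suppose $\beta_k(E,O,P)=\beta_k(E',O',P')$. After rescaling, there is $\varphi\in\PGL_3(k)$ with $\varphi(iP)=iP'$ for all $i\in\Z/n\Z$. The cubic $\varphi(E)$ and the cubic $E'$ both pass through the $n\ge10$ points $iP'$. These points impose independent conditions on cubics: nine of them already cut out a unique cubic, because two distinct irreducible cubics meet in at most $9$ points by B\'ezout, and a tenth point lying on one of them but not the other would violate this. Hence $\varphi(E)=E'$, and $\varphi$ carries $O=0\cdot P$ to $O'$ and $P$ to $P'$. So the two triples are isomorphic.

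\emph{Surjectivity.} Start from a realization $(P_i)$. The plan has four steps.

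\begin{enumerate}[label=(\arabic*)]
\item Choose six zero-sum lines forming two reducible cubics $D_1=L_1L_2L_3$ and $D_2=M_1M_2M_3$. We want $L_j\cap M_l$ to be nine distinct marked points, which means the indices form a $3\times3$ zero-sum grid. The pencil $\langle D_1,D_2\rangle$ contains every cubic through these nine points: by Cayley--Bacharach, together with the fact that no four of the points are collinear (the matroid forbids it), the nine points impose only eight conditions. A tenth marked point $P_m$, lying on neither $D_1$ nor $D_2$, then picks out a unique member $C$ of the pencil.
\item Propagate $C$ through all the marked points by Chasles. Given any zero-sum grid with eight entries already known to lie on $C$, the ninth entry also lies on $C$, because $C$ contains eight of the nine base points of the pencil spanned by the row cubic and the column cubic. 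We apply this along an explicit sequence of grids, with separate combinatorial schemes for $n$ odd and $n$ even, until every index is covered. This is where $n\ge10$ (really $n\ge12$, with $n=10,11$ done by hand) is needed, since distinctness of the nine grid entries requires room in $\Z/n\Z$.
\item Show that $C$ is irreducible and that each $P_i$ is a smooth point. If $C$ contained a line, then by the matroid at most two marked points could lie on it outside the zero-sum triples, and counting points against the lines and conics that would be left over gives a contradiction. Similarly, a marked singular point would force too many collinearities: a line through a singular point and two other marked points meets $C$ with multiplicity at least $4$, so it is a component of $C$.
\item Recover the group structure. Use $P_0$ as identity on $C^{\sm}$ and work in $\Pic^0$. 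Each collinear triple gives $P_a+P_b+P_c\sim H$. Comparing the triples $\{0,i,-i\}$, $\{1,i,-1-i\}$, and so on, we solve to get $P_i=iP_1$ and $3P_0\sim H$. The map $i\mapsto P_i$ is then an injective homomorphism from $\Z/n\Z$, so $P_1$ has exact order $n$. Finally, a cuspidal $C$ is excluded: after base change its smooth locus $C^{\sm}$ is $\G_a$, which has no nonzero $n$-torsion when $\operatorname{char}k\nmid n$. So $C$ is smooth or nodal, and $\beta_k(C,P_0,P_1)$ is the given class.
\end{enumerate}

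The main obstacle is step (2). We need an explicit, verified sequence of zero-sum grids with distinct entries whose Chasles closure covers all of $\Z/n\Z$, uniformly in $n$ and respecting parity. Our first attempt would be grids whose rows are translates of the form $(a,\,b,\,-a-b)$, with each new grid introducing exactly one new index $i+1$ or $-i$, advancing inductively.
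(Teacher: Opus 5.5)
Your outline follows the paper's own strategy: a seed pencil from two triangles meeting in a $3\times3$ zero-sum grid, a tenth point selecting $C$, Chasles propagation along zero-sum grids, irreducibility and smoothness, the group law in $\Pic^0$, and exclusion of the cusp. But there is a genuine gap exactly where you flag it: step (2) is a plan, not a proof. The whole content of surjectivity is that a chain of grids exists, each with nine entries pairwise distinct in $\Z/n\Z$ and eight of them already on $C$, whose closure is all of $\Z/n\Z$. Until such grids are exhibited, nothing forces $C$ to contain more than the ten seed points.

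The chain does exist, with the shape you guess:
\begin{itemize}
\item Take the seed grid with rows $(0,-4,4)$, $(2,1,-3)$, $(-2,3,-1)$, and tenth point $P_5$.
\item The grid with rows $(-5,0,5)$, $(1,3,-4)$, $(4,-3,-1)$ adds $P_{-5}$ (valid for $n\ge11$).
\item For each $r\ge5$ with $2r+1<n$, the grid with rows $(0,1,-1)$, $(2,r-2,-r)$, $(-2,1-r,r+1)$ and its negative add $P_{\pm(r+1)}$. Their entries are pairwise distinct integers in $[-r,r+1]$, hence distinct mod $n$.
\end{itemize}
This exhausts $\Z/n\Z$. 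For $n=10$ the seed window is already everything, for $n=11$ the bootstrap finishes, and for $n=2m\ge12$ the last step $r=m-1$ supplies the half-period $m=-m$. You should also justify that Chasles applies to each grid. Since no line carries four marked points, the row and column lines are six distinct lines (a row line equal to a column line would contain five marked points). So the two triangles meet transversally in exactly the nine grid points.

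Step (3) has a smaller hole of the same kind. Counting cannot by itself exclude $C=L\cup Q$ with $Q$ an irreducible conic, since a conic can carry arbitrarily many points. The actual contradiction is this: among the at least $n-3$ marked points of $Q$ off $L$, three have distinct indices summing to zero, giving three collinear points on $Q$. That needs a combinatorial lemma:
\begin{itemize}
\item For $n\ge12$, any $S\subset\Z/n\Z$ with $|S|\le3$ misses some zero-sum triple. There are $(n^2-3n+2\gcd(n,3))/6$ such triples, and at most $3n/2$ meet $S$.
\item For $n=10$ and $n=11$ one must check separately that every $7$-subset of $\Z/10\Z$, respectively every $8$-subset of $\Z/11\Z$, contains one.
\end{itemize}
These checks, together with a truncated group-law computation for $n=10$ (where $(0,5,-5)$ is not a triple of distinct residues), are what your phrase ``done by hand'' has to supply.

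Two smaller points. In step (4), writing $x_i=[P_i-P_0]$, the families $\{0,i+1,-i-1\}$ and $\{1,i,-1-i\}$ only give $x_{i+1}=x_i+x_1$ for $i\ge2$. You need further triples such as $(-1,-2,3)$ and $(-2,-3,5)$ to force $[H-3P_0]=0$ and $x_2=2x_1$. In injectivity, the claim that nine of the points cut out a unique cubic is false ($P_{-4},\dots,P_4$ lie on a pencil). It is also unnecessary: two distinct irreducible cubics share at most nine points by B\'ezout, which is exactly the paper's argument.
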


We first prove the easier half of Theorem~\ref{thm:field-valued-bijection}, the assertion that the map $\beta$ is injective:

\begin{proposition}[Injectivity in Theorem~\ref{thm:field-valued-bijection}]
\label{prop:field-valued-injection}
Let $n\ge 10$, and let $k$ be any field.  Then the map
\[
\beta_k:X_1(n)^\circ(k)\longrightarrow R_{\cT_n}(k)
\]
is injective.
\end{proposition}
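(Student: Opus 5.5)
Suppose $(E,O,P)$ and $(E',O',P')$ are marked plane cubics of level $n$ with $\beta_k(E,O,P)=\beta_k(E',O',P')$. We will produce an element $\varphi\in\PGL_3(k)$ carrying $E$ to $E'$, $O$ to $O'$ and $P$ to $P'$, as required by Definition~\ref{def:marked-cubic-isomorphism}. Write $P_i=iP$ and $P'_i=iP'$.

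\emph{Step 1: align the configurations.} Equality of rescaling classes gives $g\in\GL_3(k)$ and scalars $\lambda_i$ with $v'_i=\lambda_i g(v_i)$. Passing to the induced projective transformation $\varphi=[g]$, we get $\varphi(P_i)=P'_i$ for every $i\in\Z/n\Z$. In particular $\varphi(O)=O'$ (taking $i=0$) and $\varphi(P)=P'$ (taking $i=1$). Replacing $(E',O',P')$ by $\varphi^{-1}(E',O',P')$, we may therefore assume $P'_i=P_i$ for all $i$. It then remains only to show $E=E'$.

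\emph{Step 2: the two cubics coincide.} Both $E$ and $E'$ are cubics containing the $n\ge10$ distinct points $P_0,\dots,P_{n-1}$; the points are distinct by Lemma~\ref{lem:marked-points-distinct}, or directly from exact order $n$. If $E\neq E'$, then since both are irreducible they share no common component. B\'ezout's theorem then gives
\[
\#(E\cap E')\le 9<10\le n,
\]
a contradiction. Hence $E=E'$, and $\varphi$ is the required isomorphism of marked cubics.

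The only point needing care is the B\'ezout step. Irreducibility of both cubics is built into the definition of a marked plane cubic, so no common component can occur. B\'ezout also requires counting intersections over $\bar k$, but this only increases the count, so the argument is unaffected. This is also exactly where the hypothesis $n\ge10$ is used: with nine points there would be a pencil of cubics through them, and the argument would fail. I expect no serious obstacle here. In particular, the argument needs no restriction on $\operatorname{char}(k)$, which matches the statement of the proposition.
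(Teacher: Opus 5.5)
Your proof is correct and takes essentially the same approach as the paper. You use the rescaling equivalence to get $\varphi\in\PGL_3(k)$ with $\varphi(P_i)=P'_i$ for all $i$, which matches $O=P_0$ and $P=P_1$. You then apply B\'ezout to the $n\ge 10$ common points on the two irreducible cubics to force $E=E'$.
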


\begin{proof}
Let $(E,O,P)$ and $(E',O',P')$ be two elements of $X_1(n)^\circ(k)$ with
\[
\beta(E,O,P)=\beta(E',O',P').
\]
We must show that the two triples are isomorphic.

Let
\[
Q_i=iP\in E^\sm(k),\qquad Q'_i=iP'\in (E')^\sm(k).
\]
Choose homogeneous lifts $v_i$ and $v'_i$ for $Q_i$ and $Q'_i$, respectively. Since $\beta(E,O,P)=\beta(E',O',P')$
in $R_{\cT_n}(k)$, the two associated realizations are rescaling-equivalent. Hence there exist
$g\in \operatorname{GL}_3(k)$ and scalars $\lambda_i\in k^\times$ such that
$v'_i=\lambda_i\,g(v_i)$ for all $i\in \Z/n\Z$.
Thus the projective automorphism $\varphi$ associated to $g$ satisfies
$\varphi(Q_i)=Q'_i$ for all $i$.
Replacing $(E',O',P')$ by the isomorphic marked plane cubic
$(\varphi^{-1}E',\varphi^{-1}O',\varphi^{-1}P')$,
we may therefore assume that
\[
Q_i=Q'_i
\qquad\text{for all }i\in \Z/n\Z.
\]

We claim first that $E=E'$ as plane cubics in $\PP^2_k$.  Indeed, both $E$ and $E'$ are irreducible cubics.  If they were distinct, then they would have no common irreducible component.  But they contain the same $n\ge 10$ marked points.  By B\'ezout's theorem, two distinct plane cubics without a common component meet in at most $9$ points, a contradiction.  Hence $E=E'$.

Since
\[
Q_0=0\cdot P=O
\qquad\text{and}\qquad
Q'_0=0\cdot P'=O',
\]
and $Q_0 = Q'_0$, we have $O = O'$.
Similarly,
\[
Q_1=1\cdot P=P
\qquad\text{and}\qquad
Q'_1=1\cdot P'=P',
\]
and since $Q_1 = Q'_1$ we have $P=P'$.
\end{proof}

The proof that $\beta$ is surjective will take a good bit of additional work, which we will undertake now.

\section{A seed pencil through nine points}\label{sec:seed}

Throughout this section and the next two, let $k$ be a field, and let
\[
(P_i)_{i\in \Z/n\Z}\subset \PP^2(k)
\]
be a realization of the elliptic matroid $\cT_n$ over $k$. Thus, for distinct indices $a,b,c\in \Z/n\Z$, one has
\[
P_a,\ P_b,\ P_c \text{ collinear}
\qquad\Longleftrightarrow\qquad
a+b+c=0 \quad\text{in }\Z/n\Z.
\]

We begin with a simple combinatorial observation.

\begin{lemma}\label{lem:no-four-collinear}
Let $n\ge 4$, let $k$ be a field, and let
\[
(P_i)_{i\in \Z/n\Z}\subset \PP^2(k)
\]
be a realization of the elliptic matroid $\cT_n$. Then no line in $\PP^2_k$ contains four distinct points of the configuration.
\end{lemma}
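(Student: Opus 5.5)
We argue by contradiction, using only the defining property of a realization together with \Cref{lem:marked-points-distinct}. Suppose some line $L\subset\PP^2_k$ contains four distinct points of the configuration, say $P_a,P_b,P_c,P_d$. By \Cref{lem:marked-points-distinct} the indices $a,b,c,d$ are pairwise distinct elements of $\Z/n\Z$. Any three of these four points are collinear, since they lie on $L$. Hence each of the four $3$-element subsets of $\{a,b,c,d\}$ is a non-basis of $\cT_n$, and so has zero sum in $\Z/n\Z$.

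The contradiction is then purely combinatorial. We only need two of these relations:
\[
a+b+c=0 \qquad\text{and}\qquad a+b+d=0 .
\]
Subtracting them gives $c=d$, which contradicts the distinctness of the indices. Equivalently, this is the observation made right after the definition of $\cT_n$: two distinct zero-sum triples cannot share a pair of elements. Here $\{a,b,c\}$ and $\{a,b,d\}$ would be distinct zero-sum triples sharing the pair $\{a,b\}$.

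There is no real obstacle in this argument. The only point requiring care is that ``four distinct points'' must translate into four distinct indices, which \Cref{lem:marked-points-distinct} supplies (and which uses $n\ge4$). It also matters that the realization condition applies exactly to $3$-element subsets of distinct indices, so that collinearity of $P_a,P_b,P_c$ forces $a+b+c=0$.
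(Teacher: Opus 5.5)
Your argument is correct and is essentially the paper's own proof: collinearity of $P_a,P_b,P_c$ and of $P_a,P_b,P_d$ forces $a+b+c=0=a+b+d$, hence $c=d$, a contradiction. One small remark: the indices are automatically distinct because the four points are distinct and $i\mapsto P_i$ is a function. So the appeal to \Cref{lem:marked-points-distinct}, which proves the converse implication (distinct indices give distinct points), is unnecessary.
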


\begin{proof}
Suppose a line $\ell$ contained four distinct points
\[
P_a,\ P_b,\ P_c,\ P_d.
\]
Then both triples $\{a,b,c\}$ and $\{a,b,d\}$ would be dependent in the matroid $\cT_n$, so
\[
a+b+c=0
\qquad\text{and}\qquad
a+b+d=0
\]
in $\Z/n\Z$. Hence $c=d$, contradiction.
\end{proof}

We now construct a special pencil of cubics whose properties we will systematically exploit.

\begin{proposition}\label{prop:seed-pencil}
Suppose $n\ge 9$, and let
\[
L_1=\overline{P_0P_4P_{-4}},\qquad
L_2=\overline{P_1P_2P_{-3}},\qquad
L_3=\overline{P_{-1}P_{-2}P_3},
\]
and
\[
M_1=\overline{P_0P_2P_{-2}},\qquad
M_2=\overline{P_1P_3P_{-4}},\qquad
M_3=\overline{P_{-1}P_{-3}P_4}.
\]
Define reducible cubics
\[
D_1=L_1\cup L_2\cup L_3,
\qquad
D_2=M_1\cup M_2\cup M_3.
\]
Then:
\begin{enumerate}[label=\textup{(\roman*)}]
\item the six lines $L_1,L_2,L_3,M_1,M_2,M_3$ are pairwise distinct;
\item the scheme-theoretic intersection $D_1\cap D_2$ consists of the nine reduced points
\[
P_{-4},P_{-3},\dots,P_4;
\]
\item the space of cubic forms vanishing on these nine points is the pencil
$\langle D_1,D_2\rangle$.
\end{enumerate}
\end{proposition}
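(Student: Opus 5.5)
The plan is to first verify combinatorially that the six triples are zero-sum triples of distinct elements. This needs $n\ge 9$, so that $0,\pm1,\dots,\pm4$ are nine distinct residues. Each listed triple sums to zero: $0+4-4$, $1+2-3$, $-1-2+3$, $0+2-2$, $1+3-4$, $-1-3+4$. Hence by the matroid condition each triple spans a line, and the six lines are well defined.

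For (i), observe that any two of the six triples share at most one index; in fact each $L_i$ meets each $M_j$ in exactly one index, and two $L$'s or two $M$'s share none. If two of the lines coincided, that line would contain at least four distinct marked points (the union of the two triples, which by \Cref{lem:marked-points-distinct} are distinct points), contradicting \Cref{lem:no-four-collinear}.

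For (ii), the key combinatorial fact is that the $3\times3$ array with rows $(0,4,-4)$, $(1,2,-3)$, $(-1,-2,3)$ has the triples of $M_1,M_2,M_3$ as a transversal structure: each $L_i\cap M_j$ contains exactly one marked point, and the nine indices $\{0,\pm1,\dots,\pm4\}$ are each used once. The steps are as follows.
\begin{enumerate}[label=(\alph*)]
\item Since $L_i\neq M_j$ are distinct lines, $L_i\cap M_j$ is a single reduced point. It equals the marked point with the common index, so the nine pairs give exactly $P_{-4},\dots,P_4$.
\item These nine points are pairwise distinct, again by \Cref{lem:marked-points-distinct}.
\item Scheme-theoretically, $D_1\cap D_2$ is defined by $\ell_1\ell_2\ell_3$ and $m_1m_2m_3$. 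Since $D_1$ and $D_2$ share no component, the intersection is zero-dimensional of total length $9$ by B\'ezout. Its support consists of the nine distinct points $L_i\cap M_j$; the points $L_i\cap L_{i'}$ are not on $D_2$ unless they coincide with one of these nine.
\item To get reducedness, check locally at $P=L_i\cap M_j$ that no other $L_{i'}$ or $M_{j'}$ passes through $P$. Otherwise that line would contain $P$ and its own triple, giving four collinear marked points, unless $P$ belongs to that triple, which is impossible because each index occurs once in the $L$-array and once in the $M$-array. So locally the ideal is $(\ell_i,m_j)\cdot$units, which is a transverse intersection of two distinct lines, hence reduced of length $1$.
\end{enumerate}
Alternatively, nine distinct points with total length $9$ already forces each to be reduced.

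For (iii), cubic forms form a $10$-dimensional space. The main obstacle is showing the nine points impose independent conditions, i.e.\ that the space of cubics through them is exactly $2$-dimensional rather than larger. The space contains the span of $D_1$ and $D_2$, which are non-proportional since they have no common component. For the upper bound, I would use the standard Cayley--Bacharach argument for a complete intersection of two cubics: the ideal of the complete intersection $Z=D_1\cap D_2$ is generated by the two cubic equations, so $I_Z$ in degree $3$ is spanned by them. This holds because $F,G$ is a regular sequence in $k[x,y,z]$, and the Koszul resolution gives $(I_Z)_3=k F\oplus kG$. Since $Z$ is reduced by (ii), its homogeneous ideal is the ideal of the nine points, and the saturation issue is handled by the complete-intersection ideal being saturated (Cohen--Macaulay). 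This proves that the degree-$3$ part is the pencil $\langle D_1,D_2\rangle$.
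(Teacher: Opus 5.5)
Your proof is correct and follows the paper's argument essentially step for step. You use Lemma~\ref{lem:no-four-collinear} for (i), the B\'ezout length count for reducedness in (ii), and the saturated complete-intersection ideal $(D_1,D_2)$ for (iii); your local check that no third line passes through $L_i\cap M_j$ is a harmless extra, and your saturation remark fills in a point the paper leaves implicit. One wording slip: the nine points do \emph{not} impose independent conditions on cubics---they impose only eight, which is exactly why the space of cubics through them is $2$-dimensional rather than $1$-dimensional.
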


\begin{proof}
Each displayed triple sums to $0$ in $\Z/n\Z$, so the six lines are well-defined.

To prove \textup{(i)}, it suffices to note that if two of these six lines were equal, then that line would contain at least four distinct marked points, contradicting Lemma~\ref{lem:no-four-collinear}. For example, if $L_1=L_2$, then the common line contains
\[
P_0,\ P_4,\ P_{-4},\ P_1,\ P_2,\ P_{-3},
\]
which are six distinct points because $n\ge 9$.  In every other pair among the six displayed triples, the union of the two index triples has cardinality at least four in $\Z/n\Z$ for $n\ge 9$, so the same argument applies.

For \textup{(ii)}, consider the $3\times 3$ array of pairwise intersections:
\[
\begin{array}{c|ccc}
 & M_1 & M_2 & M_3 \\ \hline
L_1 & P_0 & P_{-4} & P_4 \\
L_2 & P_2 & P_1 & P_{-3} \\
L_3 & P_{-2} & P_3 & P_{-1}
\end{array}
\]

For each entry, the corresponding point lies on both the given row-line and the given column-line, and the displayed nine indices are pairwise distinct.  Since $D_1$ and $D_2$ have no common irreducible component, B\'ezout's theorem gives
\[
\operatorname{length}(D_1\cap D_2)=3\cdot3=9
\]
after base change to an algebraic closure.  The nine displayed points are distinct and each contributes intersection multiplicity at least $1$.  Hence each has multiplicity exactly $1$, and there are no further intersection points.  Thus
\[
D_1\cap D_2=\{P_{-4},P_{-3},\dots,P_4\}
\]
as a reduced scheme.

For \textup{(iii)}, let
\[
Z:=\{P_{-4},P_{-3},\dots,P_4\}\subset \PP^2_k.
\]
By \textup{(ii)}, the scheme $Z$ is the complete intersection of the two cubics $D_1$ and $D_2$. Hence its homogeneous ideal is
$I_Z=(D_1,D_2)$,
and therefore the degree-$3$ part of the ideal is 
$(I_Z)_3=\langle D_1,D_2\rangle$.
Equivalently, the cubics passing through the nine points of $Z$ form the pencil spanned by $D_1$ and $D_2$.
\end{proof}

\begin{corollary}\label{cor:unique-window-cubic}
If $n \ge 10$, there is a unique cubic $C\subset \PP^2_k$
passing through the ten points
\[
P_{-4},P_{-3},\dots,P_5.
\]
\end{corollary}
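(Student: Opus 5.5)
The plan is to reduce to the pencil of Proposition~\ref{prop:seed-pencil}. Since $n\ge10\ge9$, that proposition applies. It says the cubics through the nine points $P_{-4},\dots,P_4$ are exactly the members $\lambda D_1+\mu D_2$ of the pencil $\langle D_1,D_2\rangle$, where we write $D_1,D_2$ also for their defining cubic forms. A cubic through all ten points $P_{-4},\dots,P_5$ is therefore a member of this pencil which additionally vanishes at $P_5$. Because $n\ge10$, the index $5$ is distinct from $-4,\dots,4$ in $\Z/n\Z$ (note $5\ne-5$ would not matter, but $5\equiv -4$ would require $n=9$), so $P_5$ is a genuinely tenth point. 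Choose a homogeneous lift $v_5$; the condition $\lambda D_1(v_5)+\mu D_2(v_5)=0$ is a single linear condition on $(\lambda:\mu)\in\PP^1$. It cuts out a unique member of the pencil provided the condition is nontrivial, i.e.\ provided $D_1(v_5)$ and $D_2(v_5)$ are not both zero. Existence and uniqueness of $C$ then follow immediately, with $C$ given by $D_2(v_5)D_1-D_1(v_5)D_2$.

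So the only real step is to show that $P_5$ does not lie on both $D_1$ and $D_2$, and this is where I would concentrate. In fact it suffices that $P_5\notin D_1\cap D_2$, and Proposition~\ref{prop:seed-pencil}(ii) says that $D_1\cap D_2$ is exactly the nine points $P_{-4},\dots,P_4$. Since the marked points are pairwise distinct by Lemma~\ref{lem:marked-points-distinct}, and $5\notin\{-4,\dots,4\}$ in $\Z/n\Z$ for $n\ge10$, we get $P_5\notin D_1\cap D_2$. Hence at least one of $D_1(v_5)$, $D_2(v_5)$ is nonzero.

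As a sanity check (not needed, given (ii)), I could also verify directly that $P_5$ lies on no line $L_j$. For instance, $P_5\in L_1$ would force $\{0,4,-4,5\}$ to be four collinear marked points, contradicting Lemma~\ref{lem:no-four-collinear}. Either way, the conclusion is that the pencil restricts to a nonzero linear form at $P_5$, giving exactly one cubic. I expect no serious obstacle: the substantive work is already in part (iii) of the proposition, and the only care needed is the index bookkeeping ensuring $P_5$ is distinct from the nine base points when $n\ge10$.
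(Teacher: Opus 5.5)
Your proposal is correct and follows essentially the same route as the paper. Both arguments evaluate the pencil $\langle D_1,D_2\rangle$ of Proposition~\ref{prop:seed-pencil}(iii) at $P_5$, and use part (ii) together with $n\ge 10$ to see that this evaluation is a nonzero linear functional, whose one-dimensional kernel gives the unique cubic. Your explicit appeal to Lemma~\ref{lem:marked-points-distinct} and your check that $5\notin\{-4,\dots,4\}$ modulo $n$ simply spell out what the paper compresses into the phrase ``since $n\ge 10$.''
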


\begin{proof}
By Proposition~\ref{prop:seed-pencil}, the cubics through
$P_{-4},P_{-3},\dots,P_4$ form the $2$-dimensional vector space $\langle D_1,D_2\rangle$.
Since
\[
D_1\cap D_2=\{P_{-4},P_{-3},\dots,P_4\}
\]
scheme-theoretically and $n \ge 10$, the point $P_5$ does not lie on both $D_1$ and $D_2$. Equivalently, evaluation at $P_5$ defines a nonzero linear functional on the vector space $\langle D_1,D_2\rangle$. Its kernel is therefore $1$-dimensional, and this kernel corresponds to a unique cubic (up to scalar multiples) passing through the ten points $P_{-4},P_{-3},\dots,P_5$.
\end{proof}

The rest of the argument is based on a family of $3\times 3$ ``zero-sum grids.''
We will make use of the classical Chasles theorem\footnote{This result is due to Chasles; it is Theorem~CB3 of \cite{EGH}, where
the history is traced in detail.  It is commonly called the Cayley--Bacharach
theorem, but that result concerns the extension of Chasles' theorem to curves of higher degree.
As discussed in \cite{EGH}, Cayley's 1843 extension to intersections of curves of higher
degree was, as stated, false, and the missing term was supplied by Bacharach in
1886.} (see, for example, \cite{EGH}):

\begin{theorem}[Chasles]\label{thm:CBC}
Let $C_1,C_2\subset \PP^2_k$ be cubic curves with no common irreducible component. Suppose that
\[
C_1\cap C_2=\{Q_1,\dots,Q_9\}
\]
consists of nine reduced $k$-rational points. Then any cubic curve passing through eight of the points $Q_1,\dots,Q_9$ necessarily passes through the ninth.
\end{theorem}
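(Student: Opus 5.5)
The plan is to reduce Chasles' theorem to the dimension count already used in the proof of Proposition~\ref{prop:seed-pencil}(iii). Let $F_1,F_2$ be cubic forms defining $C_1,C_2$ and let $Z=C_1\cap C_2$. Since $C_1,C_2$ have no common component, $F_1,F_2$ form a regular sequence in $S=k[x,y,z]$, so $I=(F_1,F_2)$ is a complete intersection. By hypothesis $Z$ is reduced and consists of nine $k$-rational points; hence $I$ is saturated and equals the ideal $I_Z$ of the nine points. The Koszul resolution
\[
0\to S(-6)\to S(-3)^2\to S\to S/I\to 0
\]
then gives the Hilbert function of $S/I_Z$ in each degree. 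The argument will proceed in the following three steps.

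\emph{Step 1.} Compute the Hilbert function of $S/I_Z$ in degree $3$. From the resolution, $\dim (S/I_Z)_3=10-2=8$. In particular, the nine points impose only $8$ conditions on cubics, and $(I_Z)_3=\langle F_1,F_2\rangle$.

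\emph{Step 2.} Show that any eight of the points impose independent conditions on cubics. Write $Z'=Z\setminus\{Q_9\}$; by symmetry it suffices to treat this case. We have $\dim (S/I_{Z'})_3\le 8$ trivially. For the reverse inequality, use the Koszul resolution to get $\dim(S/I_Z)_2=6-0=6$, while $Z$ has $9$ points. The key fact needed is then that $h_{Z}(3)=8<9$ while $h_Z(d)=9$ for $d\ge4$ (again from the resolution: $\dim(S/I)_4=15-2\cdot3=9$).

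\emph{Step 3.} Derive the conclusion. The cleanest route is Cayley--Bacharach duality for the Gorenstein zero-dimensional scheme $Z$, whose socle degree is $3+3-3=3$. I would instead argue directly that the cubics through $Z'$ are exactly $\langle F_1,F_2\rangle$, that is, $\dim(I_{Z'})_3=2$. Suppose a cubic $G$ through $Z'$ with $G(Q_9)\ne0$ existed. Then $Q_9$ would be separated from $Z'$ by cubics, and equivalently $h_{Z'}(3)=8=h_Z(3)$ would force $(I_{Z'})_3=(I_Z)_3$, since $I_Z\subseteq I_{Z'}$ and both quotients have dimension $8$. The whole argument therefore reduces to showing $h_{Z'}(3)=8$, i.e.\ that eight points of a complete intersection of two cubics impose independent conditions on cubics.

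I expect this last claim to be the main obstacle. My first attempt would be the standard geometric case analysis: eight points fail to impose independent conditions on cubics only if $5$ of them are collinear or all $8$ lie on a conic. If $5$ points of $Z$ lay on a line $\ell$, then by B\'ezout $\ell$ would be a component of both $C_1$ and $C_2$, contradicting the hypothesis. If $8$ lay on a conic $Q$, then $Q\cdot C_i\ge 8$ would make $Q\cap C_i$ have length $6$ unless $Q$ and $C_i$ share a component. Hence some component of $Q$ (a line or $Q$ itself) would be common to $C_1$ and $C_2$, again impossible; tracking points on a line component reduces this to the collinear case. Failing that, I would fall back on the algebraic duality argument via the Koszul complex, which gives the statement uniformly over any field.
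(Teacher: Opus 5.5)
Your argument is correct, and it is essentially the standard proof of Theorem~CB3 in Eisenbud--Green--Harris, which is all the paper relies on: the paper gives no proof of its own, citing that theorem over an algebraically closed field and passing to general $k$ by base change. You instead work directly over $k$. Your reduction to showing that any eight of the nine points impose independent conditions on cubics, i.e.\ $h_{Z'}(3)=8$, is right, and it needs only that $F_1,F_2$ are linearly independent, not the full Koszul computation. The criterion you quote (eight points fail to impose independent conditions on cubics only if five are collinear or all eight lie on a conic), together with B\'ezout, then finishes.

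Two small repairs are needed. First, that criterion is proved over an algebraically closed field, so you still need the paper's base-change step. This is easy: $\dim_k(I_{Z'})_3$ is invariant under extension of scalars, since it is the kernel of a $k$-linear evaluation map. Also, $F_1,F_2$ remain coprime over $\bar k$, because a common geometric component would descend. Second, your conic case contains a slip. The inference ``$Q$ shares a component with each $C_i$, hence $C_1$ and $C_2$ share a component'' fails when $Q=L\cup L'$ with $L\subset C_1$ and $L'\subset C_2$. Instead, note that one of the two lines contains at least four of the eight points. Your B\'ezout argument then shows that four collinear points of $Z$ already force that line to be a component of both cubics; five is more than you need.

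Finally, Step~2 as written proves nothing about $Z'$. The values $h_Z(2)$ and $h_Z(4)$ matter only on the duality route, where Cayley--Bacharach with $s=3+3-3=3$ gives $h_Z(3)-h_{Z'}(3)=1-h_{\{Q_9\}}(0)=0$ directly.
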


\begin{remark}
We will only apply Theorem~\ref{thm:CBC} in the case where $C_1$ and $C_2$ are unions of three distinct lines.  The stated form of the theorem over an arbitrary field follows from the usual algebraically closed version by base change: a common geometric component would descend, and the nine reduced $k$-rational intersection points remain reduced after scalar extension.
\end{remark}

\begin{definition}\label{def:gamma-grid}
For elements $a,b,c,d\in \Z/n\Z$, define the associated \emph{zero-sum grid}
\[
\Gamma(a,b,c,d):=
\begin{pmatrix}
a & b & -a-b \\
c & d & -c-d \\
-a-c & -b-d & a+b+c+d
\end{pmatrix}.
\]
\end{definition}

Every row sum and every column sum of $\Gamma(a,b,c,d)$ is equal to $0$ in $\Z/n\Z$. Thus, whenever the nine entries are pairwise distinct, each row and each column of the grid gives a collinear triple of marked points.

\begin{lemma}[Chasles completion for a zero-sum grid]\label{lem:grid-completion}
Let $X\subset \PP^2_k$ be a cubic curve. Suppose that the nine entries of $\Gamma(a,b,c,d)$
are pairwise distinct in $\Z/n\Z$. If $X$ contains the eight marked points corresponding to any eight of these nine entries, then $X$ contains the ninth marked point as well.
\end{lemma}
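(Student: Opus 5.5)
The plan is to reduce the lemma to Theorem~\ref{thm:CBC}. We take $C_1$ to be the union of the three row-lines of $\Gamma(a,b,c,d)$ and $C_2$ the union of the three column-lines. Write the entries as $g_{ij}$ for $1\le i,j\le 3$. Each row $\{g_{i1},g_{i2},g_{i3}\}$ consists of three distinct indices summing to $0$, so the marked points $P_{g_{i1}},P_{g_{i2}},P_{g_{i3}}$ are collinear. Since $n\ge 4$, Lemma~\ref{lem:marked-points-distinct} shows these points are distinct, so they span a well-defined line $R_i$. The same reasoning gives column-lines $K_1,K_2,K_3$. We then set $C_1=R_1\cup R_2\cup R_3$ and $C_2=K_1\cup K_2\cup K_3$.

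\textbf{Step 1: the six lines are pairwise distinct.} This is the analogue of Proposition~\ref{prop:seed-pencil}(i). Any two distinct rows, or any two distinct columns, together contain six distinct indices. A row and a column together contain five distinct indices, since they share exactly one entry. So if two of the six lines coincided, that line would contain at least four distinct marked points, contradicting Lemma~\ref{lem:no-four-collinear}. In particular $C_1$ and $C_2$ have no common irreducible component.

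\textbf{Step 2: the intersection is exactly the nine grid points, each reduced.} The point $P_{g_{ij}}$ lies on both $R_i$ and $K_j$, and by hypothesis the nine points $P_{g_{ij}}$ are pairwise distinct. Over $\bar k$, B\'ezout gives total intersection length $9$. Each of the nine distinct points contributes multiplicity at least $1$, so each contributes exactly $1$ and there are no further intersection points. This is the same argument as in Proposition~\ref{prop:seed-pencil}(ii). Hence $C_1\cap C_2=\{P_{g_{ij}}\}$ consists of nine reduced $k$-rational points.

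\textbf{Step 3: conclusion.} Theorem~\ref{thm:CBC} now applies directly to $C_1,C_2$: any cubic $X$ through eight of these nine points passes through the ninth.

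The only step needing care is Step 1. Its key input is that distinct zero-sum triples share at most one element, which is exactly what lets Lemma~\ref{lem:no-four-collinear} apply. The distinctness hypothesis on the nine entries is what makes both the line definitions and the counts in Steps 1 and 2 work. I expect no real obstacle beyond this bookkeeping.
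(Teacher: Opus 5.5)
Your proposal is correct and matches the paper's proof essentially step for step. You use row-lines and column-lines, whose pairwise distinctness comes from Lemma~\ref{lem:no-four-collinear} (a coinciding row and column line would carry five distinct grid points). B\'ezout then forces the nine grid points to be the entire reduced intersection, and Theorem~\ref{thm:CBC} finishes the argument.
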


\begin{proof}
Let
\[
\Gamma(a,b,c,d)=
\begin{pmatrix}
u_{11} & u_{12} & u_{13} \\
u_{21} & u_{22} & u_{23} \\
u_{31} & u_{32} & u_{33}
\end{pmatrix}.
\]
For $i=1,2,3$, let $R_i$ be the line through the three points in row $i$, and for $j=1,2,3$, let $C_j$ be the line through the three points in column $j$.

Because the nine entries are pairwise distinct, each $R_i$ contains three distinct marked points, and each $C_j$ contains three distinct marked points. By Lemma~\ref{lem:no-four-collinear}, the three row-lines are pairwise distinct, the three column-lines are pairwise distinct, and no row-line can coincide with a column-line: indeed, if $R_i=C_j$, then that common line would contain the five distinct grid points in row $i$ and column $j$.

Now set
\[
R:=R_1\cup R_2\cup R_3,
\qquad
C:=C_1\cup C_2\cup C_3.
\]
These are cubics with no common irreducible component, and the nine grid points are distinct points of $R\cap C$.  By B\'ezout's theorem, the total intersection length is $9$ after base change to an algebraic closure.  It follows that the nine grid points are the entire intersection, each with multiplicity $1$.  Thus $R\cap C$ is a reduced zero-dimensional scheme of degree $9$, and Theorem~\ref{thm:CBC} shows that any cubic through eight of the nine grid points must also pass through the ninth.
\end{proof}

\section{Propagation in the odd case}\label{sec:odd-propagation}

In this section we assume that
\[
n=2m+1\ge 13
\]
is odd. We identify $\Z/n\Z$ with the symmetric set of representatives
\[
\{-m,-m+1,\dots,-1,0,1,\dots,m\}.
\]
For integers $r\le s$ in this range, we write
\[
[r,s]:=\{r,r+1,\dots,s\}\subset \Z/n\Z.
\]

Let $C\subset \PP^2_k$ be the unique cubic of Corollary~\ref{cor:unique-window-cubic}. Thus
\[
P_i\in C \qquad\text{for all } i\in [-4,5].
\]

Our first goal is to enlarge this to the symmetric block $[-5,5]$.

\begin{lemma}\label{lem:odd-first-bootstrap}
The cubic $C$ contains the point $P_{-5}$. Equivalently,
\[
[-5,5]\subseteq \{\,i\in \Z/n\Z : P_i\in C\,\}.
\]
\end{lemma}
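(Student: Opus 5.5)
To prove the lemma I would make a single application of the Chasles completion lemma for zero-sum grids (Lemma~\ref{lem:grid-completion}). I need a grid $\Gamma(a,b,c,d)$ with three properties. Its nine entries must be pairwise distinct in $\Z/n\Z$. Exactly one entry must equal $-5$. The other eight entries must lie in the window $[-4,5]$, which we already know lies on $C$ by Corollary~\ref{cor:unique-window-cubic}.

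To find such a grid, I put the $-5$ in the lower-right corner, so $a+b+c+d=-5$. For the entry $-c-d=5+a+b$ to stay in the window we need $a+b\le 0$. A short search then gives the choice $(a,b,c,d)=(3,-4,-3,-1)$:
\[
\Gamma(3,-4,-3,-1)=
\begin{pmatrix}
3 & -4 & 1\\
-3 & -1 & 4\\
0 & 5 & -5
\end{pmatrix}.
\]
The row sums are $3-4+1$, $-3-1+4$ and $0+5-5$, and the column sums are $3-3+0$, $-4-1+5$ and $1+4-5$; all six are $0$ already in $\Z$. The nine entries are the integers $\{-5,-4,-3,-1,0,1,3,4,5\}$. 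These are distinct integers of absolute value at most $5$. Since $n\ge 13>10$, they remain pairwise distinct in $\Z/n\Z$.

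I would then conclude as follows. The eight entries other than $-5$ all lie in $[-4,5]$, so the corresponding marked points lie on $C$. Lemma~\ref{lem:grid-completion} then forces $P_{-5}\in C$. Combined with $[-4,5]\subseteq\{i: P_i\in C\}$, this gives $[-5,5]\subseteq\{i:P_i\in C\}$.

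The only real obstacle is finding the grid, and the explicit choice above settles it. Everything else (distinctness of the six lines, reducedness of the nine-point intersection) is already packaged into Lemma~\ref{lem:grid-completion}. The only hypothesis that lemma needs is that the nine entries are pairwise distinct modulo $n$, which was checked above.
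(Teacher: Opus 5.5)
Your proposal is correct and is essentially the paper's proof: a single application of Lemma~\ref{lem:grid-completion} to a zero-sum grid whose eight other entries lie in the window $[-4,5]$. In fact your grid $\Gamma(3,-4,-3,-1)$ is the paper's grid $\Gamma(-5,0,1,3)$ with its rows and columns permuted, so both arguments use the same six lines and the same two reducible cubics.
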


\begin{proof}
Consider the grid
\[
\Gamma(-5,0,1,3)=
\begin{pmatrix}
-5 & 0 & 5 \\
1 & 3 & -4 \\
4 & -3 & -1
\end{pmatrix}.
\]
Its nine entries are distinct integers in the interval $[-5,5]$, hence are distinct in $\Z/n\Z$ because $n\ge 11$.
The eight entries
\[
0,\ 5,\ 1,\ 3,\ -4,\ 4,\ -3,\ -1
\]
all lie in the window $[-4,5]$, so the corresponding eight marked points lie on $C$ by construction. By Lemma~\ref{lem:grid-completion}, the ninth point $P_{-5}$ also lies on $C$.
\end{proof}

We now establish the basic propagation step.

\begin{lemma}\label{lem:odd-propagation-step}
Let $r$ be an integer with $5\le r<m$.
Suppose that
\[
P_i\in C \qquad\text{for all } i\in [-r,r].
\]
Then
\[
P_{r+1}\in C
\qquad\text{and}\qquad
P_{-(r+1)}\in C.
\]
In other words,
\[
[-r,r]\subseteq \{\,i : P_i\in C\,\}
\qquad\Longrightarrow\qquad
[-(r+1),r+1]\subseteq \{\,i : P_i\in C\,\}.
\]
\end{lemma}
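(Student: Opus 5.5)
The plan is to find a single zero-sum grid $\Gamma(a,b,c,d)$ whose nine entries are pairwise distinct in $\Z/n\Z$, whose bottom-right entry $a+b+c+d$ equals $r+1$, and whose other eight entries all lie in $[-r,r]$. Lemma~\ref{lem:grid-completion} then gives $P_{r+1}\in C$, exactly as in the proof of Lemma~\ref{lem:odd-first-bootstrap}. For $P_{-(r+1)}$ I will use the negated grid $\Gamma(-a,-b,-c,-d)$. Its entries are the negatives of the original ones, so they are still pairwise distinct; its eight non-corner entries lie in $-[-r,r]=[-r,r]$; and its corner is $-(r+1)$. Thus the same completion step yields $P_{-(r+1)}\in C$.

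The point about distinctness modulo $n$ is handled as in Lemma~\ref{lem:odd-first-bootstrap}. All nine entries will be integers in $[-(r+1),r+1]\subseteq[-m,m]$, because $r<m$. Since these form a complete set of representatives for $\Z/n\Z$, distinctness as integers implies distinctness in $\Z/n\Z$. So it suffices to check distinctness over $\Z$.

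For the grid itself I would take $b=0$ and $a=r$, so that the top row is $r,0,-r$. It remains to choose $c,d$ with $c+d=1$ so that the corner equals $r+1$. For $r\ge 6$ I propose
\[
\Gamma(r,0,-2,3)=
\begin{pmatrix}
r & 0 & -r\\
-2 & 3 & -1\\
2-r & -3 & r+1
\end{pmatrix}.
\]
Here $2-r\le -4$ for $r\ge6$ and $2-r\ne -r$, so the nine integers $r,0,-r,-2,3,-1,2-r,-3,r+1$ are distinct. All entries except $r+1$ lie in $[-r,r]$.

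The only small case where this choice fails is $r=5$, where $2-r=-3$ collides with the entry $-3$. For $r=5$ I will instead use
\[
\Gamma(5,0,-3,4)=
\begin{pmatrix}
5&0&-5\\
-3&4&-1\\
-2&-4&6
\end{pmatrix},
\]
whose entries are distinct, with the eight non-corner entries lying in $[-5,5]$.

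The main (mild) obstacle is precisely this bookkeeping: choosing a uniform grid in which no accidental coincidence occurs for small $r$. Once the grid is fixed, the argument is a direct application of Lemma~\ref{lem:grid-completion} together with the symmetry $\Gamma(-a,-b,-c,-d)=-\Gamma(a,b,c,d)$.
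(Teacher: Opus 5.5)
Your proof is correct and takes the same approach as the paper: one zero-sum grid with eight entries in $[-r,r]$ and corner $r+1$, completed via Lemma~\ref{lem:grid-completion}, then the negated grid for $-(r+1)$. The only difference is the choice of grid. The paper uses $\Gamma(0,1,2,r-2)$, which has distinct entries for every $r\ge 5$, so it avoids your separate $r=5$ case.
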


\begin{proof}
First consider the grid
\[
\Gamma(0,1,2,r-2)=
\begin{pmatrix}
0 & 1 & -1 \\
2 & r-2 & -r \\
-2 & 1-r & r+1
\end{pmatrix}.
\]
Since $r\ge 5$, the entries of this grid are pairwise distinct. Moreover, they all lie in the interval $[-r,r+1]$, whose length is
\[
(r+1)-(-r)=2r+1<n=2m+1
\]
because $r<m$. Therefore the entries are also pairwise distinct in $\Z/n\Z$.

All entries except $r+1$ lie in $[-r,r]$, so the corresponding marked points lie on $C$ by hypothesis. Lemma~\ref{lem:grid-completion} therefore implies that
\[
P_{r+1}\in C.
\]

Now apply the same argument to the negated grid
\[
-\Gamma(0,1,2,r-2)=
\begin{pmatrix}
0 & -1 & 1 \\
-2 & 2-r & r \\
2 & r-1 & -(r+1)
\end{pmatrix}.
\]
Again the entries are pairwise distinct in $\Z/n\Z$, and all entries except $-(r+1)$ lie in $[-r,r]$. 
Lemma~\ref{lem:grid-completion} gives
\[
P_{-(r+1)}\in C,
\]
completing the argument.
\end{proof}

We can now propagate to the whole configuration.

\begin{proposition}\label{prop:odd-all-on-cubic}
Assume that $n=2m+1\ge 13$ is odd. Then the cubic $C$ of Corollary~\ref{cor:unique-window-cubic} contains every marked point $P_i$.
\end{proposition}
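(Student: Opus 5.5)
The plan is a finite induction, using Lemma~\ref{lem:odd-first-bootstrap} as the base case and Lemma~\ref{lem:odd-propagation-step} as the inductive step. In the odd case we identify $\Z/n\Z$ with $[-m,m]$, so it is enough to show that $P_i\in C$ for every $i\in[-m,m]$. Since $n=2m+1\ge 13$, we have $m\ge 6$. In particular the starting block $[-5,5]$ is a proper sub-block of $[-m,m]$, and $r=5$ satisfies the hypothesis $5\le r<m$ of the propagation lemma.

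The steps run as follows. First, Corollary~\ref{cor:unique-window-cubic} places $P_i$ on $C$ for $i\in[-4,5]$. Lemma~\ref{lem:odd-first-bootstrap} then extends this to the whole block $[-5,5]$. Next we argue by induction on $r$ for $5\le r\le m$, with inductive claim $[-r,r]\subseteq\{i:P_i\in C\}$. If the claim holds for some $r<m$, Lemma~\ref{lem:odd-propagation-step} gives $P_{\pm(r+1)}\in C$, so the claim holds for $r+1$. After finitely many steps we reach $r=m$, and therefore $[-m,m]=\Z/n\Z$ lies in the set of indices of marked points on $C$.

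The only point that needs care is the endpoint of the induction. Lemma~\ref{lem:odd-propagation-step} requires $r<m$ because its grid entries must stay inside an interval of length $2r+1<n$, which keeps them distinct modulo $n$. The last application therefore uses $r=m-1$ and produces $P_{\pm m}$, and that already exhausts $\Z/n\Z$. No wraparound step is needed. The proof is essentially bookkeeping: all the geometric content is in the bootstrap lemma and the propagation lemma, and those in turn rest on Lemma~\ref{lem:grid-completion}.
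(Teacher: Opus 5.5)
Your proposal is correct and follows the paper's proof exactly: you use Lemma~\ref{lem:odd-first-bootstrap} to get $[-5,5]$, then apply Lemma~\ref{lem:odd-propagation-step} for $r=5,\dots,m-1$ (which is valid since $m\ge 6$) to reach $[-m,m]=\Z/n\Z$. Your remark that the constraint $r<m$ is exactly what keeps the grid entries distinct modulo $n$, and that no wraparound step is needed, matches the paper's bookkeeping.
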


\begin{proof}
By Lemma~\ref{lem:odd-first-bootstrap}, the cubic $C$ contains the full block $[-5,5]$.
By hypothesis, $m\ge 6$, and Lemma~\ref{lem:odd-propagation-step} applies successively for $r=5,6,\dots,m-1$.
Starting from $[-5,5]$, we obtain
\[
[-6,6],\ [-7,7],\ \dots,\ [-m,m].
\]
But $[-m,m]$ is the whole of $\Z/n\Z$ in our chosen system of representatives. Therefore
\[
P_i\in C \qquad\text{for all } i\in \Z/n\Z.
\]
\end{proof}

\section{Propagation in the even case}\label{sec:even-propagation}

In this section we assume that
\[
n=2m\ge 12
\]
is even. We identify $\Z/n\Z$ with the set of representatives
\[
\{-(m-1),-(m-2),\dots,-1,0,1,\dots,m\}.
\]
Thus the only residue class not represented symmetrically is the half-period class $m=-m$.

Let $C\subset \PP^2_k$ again denote the unique cubic of Corollary~\ref{cor:unique-window-cubic}. We first prove the same initial bootstrap as in the odd case.

\begin{lemma}\label{lem:even-first-bootstrap}
The cubic $C$ contains the point $P_{-5}$. Equivalently,
\[
[-5,5]\subseteq \{\,i\in \Z/n\Z : P_i\in C\,\}.
\]
\end{lemma}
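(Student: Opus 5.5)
The plan is to reuse the grid from the odd case, Lemma~\ref{lem:odd-first-bootstrap}, and apply Lemma~\ref{lem:grid-completion} to
\[
\Gamma(-5,0,1,3)=
\begin{pmatrix}
-5 & 0 & 5 \\
1 & 3 & -4 \\
4 & -3 & -1
\end{pmatrix}.
\]
Its nine entries are the integers $-5,-4,-3,-1,0,1,3,4,5$. These are pairwise distinct as integers and all lie in the interval $[-5,5]$, which has length $10$.

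The only thing to check is that they remain distinct in $\Z/n\Z$. Two of them can coincide modulo $n$ only if their difference is a nonzero multiple of $n$. Every such difference has absolute value at most $10$. Since $n=2m\ge 12>10$, no collision occurs. In the even representative system $\{-(m-1),\dots,m\}$ with $m\ge 6$, all nine entries are honest representatives. In particular $-5\ge -(m-1)$, so $P_{-5}$ is a point distinct from $P_5$ and from the other entries.

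Next, the eight entries other than $-5$ are $0,5,1,3,-4,4,-3,-1$. All of these lie in the window $[-4,5]$, so Corollary~\ref{cor:unique-window-cubic} places the corresponding marked points on $C$. Lemma~\ref{lem:grid-completion} then forces $P_{-5}\in C$, which together with $[-4,5]$ gives the block $[-5,5]$.

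There is essentially no obstacle here. The only point needing care is the distinctness check modulo an even $n$. One must make sure the half-period class $m$ does not interfere, i.e.\ that $\pm5\ne m$. This holds because $m\ge 6$, which is exactly why this section assumes $n\ge12$. The cases $n=10,11$ are deferred to Section~\ref{sec:small-levels}.
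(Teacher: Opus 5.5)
Your proposal is correct and is essentially the paper's own proof: reuse the grid $\Gamma(-5,0,1,3)$, note that its entries are pairwise distinct modulo $n\ge 12$ and that eight of them lie in the window $[-4,5]$, and apply Lemma~\ref{lem:grid-completion}. Your remark about the half-period class is harmless but not needed; the difference bound $10<n$ already gives the required distinctness.
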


\begin{proof}
The same grid as before works:
\[
\Gamma(-5,0,1,3)=
\begin{pmatrix}
-5 & 0 & 5 \\
1 & 3 & -4 \\
4 & -3 & -1
\end{pmatrix}.
\]
Since $n\ge 12$, the entries of this grid are pairwise distinct in $\Z/n\Z$
Eight of the nine entries lie in the window $[-4,5]$, so the corresponding eight marked points lie on $C$. Lemma~\ref{lem:grid-completion} forces the remaining point $P_{-5}$ to lie on $C$ as well.
\end{proof}

The next lemma is the same propagation step as in the odd case.

\begin{lemma}\label{lem:even-propagation-step}
Let $r$ be an integer with
\[
5\le r\le m-2.
\]
Suppose that
\[
P_i\in C \qquad\text{for all } i\in [-r,r].
\]
Then
\[
P_{r+1}\in C
\qquad\text{and}\qquad
P_{-(r+1)}\in C.
\]
\end{lemma}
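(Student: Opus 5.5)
I will reuse the grid from Lemma~\ref{lem:odd-propagation-step} word for word. The only new point is to check that its entries are still pairwise distinct in $\Z/n\Z$ when $n=2m$ and $r\le m-2$.

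For $P_{r+1}$, consider
\[
\Gamma(0,1,2,r-2)=
\begin{pmatrix}
0 & 1 & -1 \\
2 & r-2 & -r \\
-2 & 1-r & r+1
\end{pmatrix}.
\]
Because $r\ge 5$, the nine entries are pairwise distinct as integers. Here $r-2\ge 3$ differs from $0,\pm1,\pm2$, and $1-r\le -4$ differs from $-r$. All entries lie in the interval $[-r,r+1]$, which contains $2r+2\le 2m-2<n$ integers. Two distinct integers in this interval therefore differ by less than $n$, so the entries remain distinct in $\Z/n\Z$.

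Every entry except $r+1$ lies in $[-r,r]$, so by hypothesis the corresponding eight marked points lie on $C$. Lemma~\ref{lem:grid-completion} then gives $P_{r+1}\in C$.

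For $P_{-(r+1)}$, apply the same reasoning to the negated grid $-\Gamma(0,1,2,r-2)=\Gamma(0,-1,-2,2-r)$, whose entries lie in $[-(r+1),r]$. Distinctness in $\Z/n\Z$ follows from the same length count. Eight entries lie in $[-r,r]$, and Lemma~\ref{lem:grid-completion} gives $P_{-(r+1)}\in C$.

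The one place to be careful is the bound $r\le m-2$, which is exactly where the even case differs from the odd case. Since $r+1\le m-1$, the newly added indices $\pm(r+1)$ are never the half-period class $m=-m$. Moreover, $r+1$ and $-(r+1)$ are distinct residues, since $2(r+1)\le 2m-2<n$.

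I expect no real obstacle here. The half-period point $P_m$ (together with, possibly, $P_{\pm(m-1)}$) cannot be reached by this symmetric step and must be handled by a separate grid in a later lemma. That difficulty belongs to the next stage, not to this one.
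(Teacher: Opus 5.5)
Your proof is correct and is exactly the paper's argument: the same grid $\Gamma(0,1,2,r-2)$ and its negative, with distinctness modulo $n=2m$ coming from the interval-length bound $2r+1\le 2m-3<n$, followed by Lemma~\ref{lem:grid-completion}. One small correction to your closing aside: $P_{\pm(m-1)}$ are reached by this very step at $r=m-2$ (or already lie in $[-5,5]$ when $m=6$), so only $P_m$ needs the separate grid.
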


\begin{proof}
Exactly the same grids as in Lemma~\ref{lem:odd-propagation-step} apply:
\[
\Gamma(0,1,2,r-2)=
\begin{pmatrix}
0 & 1 & -1 \\
2 & r-2 & -r \\
-2 & 1-r & r+1
\end{pmatrix}
\]
and its negative. The entries are pairwise distinct integers lying in $[-r,r+1]$. Since
\[
(r+1)-(-r)=2r+1\le 2(m-2)+1 = 2m-3 < 2m=n,
\]
they remain pairwise distinct modulo $n$. All entries except the desired new one lie in the known block $[-r,r]$, so Lemma~\ref{lem:grid-completion} gives both
\[
P_{r+1}\in C
\qquad\text{and}\qquad
P_{-(r+1)}\in C.
\]
\end{proof}

Starting from $[-5,5]$, this propagates up to the block $[-(m-1),m-1]$. One residue class remains: the half-period $m$.

\begin{lemma}\label{lem:half-period-grid}
Assume that
\[
P_i\in C \qquad\text{for all } i\in [-(m-1),m-1].
\]
Then
\[
P_m\in C.
\]
\end{lemma}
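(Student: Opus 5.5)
The plan is to produce one more zero-sum grid in which $m$ is the only entry outside the known block $[-(m-1),m-1]$, and then apply Lemma~\ref{lem:grid-completion}. This is exactly the grid of Lemma~\ref{lem:even-propagation-step} pushed one step further, to $r=m-1$. I take
\[
\Gamma(0,1,2,m-3)=
\begin{pmatrix}
0 & 1 & -1 \\
2 & m-3 & -(m-1) \\
-2 & -(m-2) & m
\end{pmatrix}.
\]
Its row and column sums vanish by construction; the bottom-right entry is $0+1+2+(m-3)=m+0=m$.

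The key step is checking that the nine entries are pairwise distinct in $\Z/n\Z$. As integers they all lie in the interval $[-(m-1),m]$, which has length $2m-1<n=2m$. Two distinct integers in this interval therefore cannot be congruent modulo $n$, so it suffices to check distinctness as integers.

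Since $n\ge12$ we have $m\ge6$. Then $m-3\ge3$ is distinct from $0,\pm1,\pm2$ and from $m$. The entries $-(m-1)$ and $-(m-2)$ are at most $-4$, so they are distinct from each other and from $0,\pm1,\pm2$. Together with the remaining obvious comparisons, this gives nine distinct entries.

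Next I check where the entries lie. Every entry other than $m$ lies in $[-(m-1),m-1]$, so by hypothesis the corresponding eight marked points lie on $C$. Lemma~\ref{lem:grid-completion} then forces $P_m\in C$.

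The only point needing care is the distinctness modulo $n$, since the interval used has length $2m-1$, just under $n$. I expect this to be the main obstacle, but it is a routine check rather than a real difficulty. It also explains why the propagation of Lemma~\ref{lem:even-propagation-step} had to stop at $r=m-2$: only at $r=m-1$ does the new entry $r+1$ become the half-period $m$, and the companion negated grid would produce $-m\equiv m$, which adds nothing.
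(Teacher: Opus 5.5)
Your proposal is correct and follows the paper's proof: you use the same grid $\Gamma_m=\Gamma(0,1,2,m-3)$, check distinctness as integers in $[-(m-1),m]$ using $m\ge 6$, and apply Lemma~\ref{lem:grid-completion} with the eight known entries. Your closing observation that this is the $r=m-1$ case of the propagation step, where the negated grid only returns $P_{-m}=P_m$, is also accurate.
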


\begin{proof}
Consider the grid
\[
\Gamma_m :=
\begin{pmatrix}
0 & 1 & -1 \\
2 & m-3 & -(m-1) \\
-2 & -(m-2) & m
\end{pmatrix}.
\]
Each row sum and each column sum is $0$ in $\Z/2m\Z$, so this is indeed a zero-sum grid.

Since $m\ge 6$, the entries of $\Gamma_m$ are pairwise distinct as integers. Moreover, they all lie in the interval $[-(m-1),m]$, whose length is
\[
m-(-(m-1)) = 2m-1 < 2m=n.
\]
Hence they are pairwise distinct in $\Z/n\Z$.
All entries except $m$ belong to the known block $[-(m-1),m-1]$. Therefore Lemma~\ref{lem:grid-completion} implies that $P_m\in C$ as well.
\end{proof}

We may now conclude the even case.

\begin{proposition}\label{prop:even-all-on-cubic}
Assume that $n=2m\ge 12$ is even. Then the cubic $C$ of Corollary~\ref{cor:unique-window-cubic} contains every marked point $P_i$.
\end{proposition}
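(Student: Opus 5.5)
The plan mirrors the proof of Proposition~\ref{prop:odd-all-on-cubic}, with the extra half-period step supplied by Lemma~\ref{lem:half-period-grid}. Since $n=2m\ge 12$, we have $m\ge 6$, which makes all three earlier even-case lemmas available.

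First, Lemma~\ref{lem:even-first-bootstrap} shows that $C$ contains $P_i$ for every $i\in[-5,5]$. Corollary~\ref{cor:unique-window-cubic} already gives $[-4,5]$, and the bootstrap adds $P_{-5}$.

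Next, I would apply Lemma~\ref{lem:even-propagation-step} successively for $r=5,6,\dots,m-2$. Each application enlarges the known block from $[-r,r]$ to $[-(r+1),r+1]$, so after the last step $C$ contains every $P_i$ with $i\in[-(m-1),m-1]$. The one point to check is the boundary case $m=6$ (that is, $n=12$). There the range $5\le r\le m-2=4$ is empty, but the starting block $[-5,5]$ already equals $[-(m-1),m-1]$, so nothing needs to be propagated. For $m\ge 7$ the range is nonempty and the induction runs as in the odd case.

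Finally, Lemma~\ref{lem:half-period-grid} applies to the block $[-(m-1),m-1]$ and gives $P_m\in C$. Since $\{-(m-1),\dots,m\}$ is our chosen system of representatives for $\Z/n\Z$, this shows $P_i\in C$ for all $i$. No step is a real obstacle: the distinctness checks modulo $n$ were all carried out inside the cited lemmas, and the only thing to verify here is that their index ranges fit together, including the degenerate case $n=12$.
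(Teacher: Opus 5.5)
Your proposal is correct and follows the paper's proof essentially verbatim. The paper likewise bootstraps to $[-5,5]$, treats $m=6$ separately as the case where this block already equals $[-(m-1),m-1]$, propagates via Lemma~\ref{lem:even-propagation-step} for $r=5,\dots,m-2$ when $m\ge7$, and finishes with Lemma~\ref{lem:half-period-grid}.
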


\begin{proof}
By Lemma~\ref{lem:even-first-bootstrap}, the cubic $C$ contains $[-5,5]$.

If $m=6$, then this already gives the block $[-(m-1),m-1]=[-5,5]$, so Lemma~\ref{lem:half-period-grid} immediately yields $P_m\in C$.
If $m\ge 7$, apply Lemma~\ref{lem:even-propagation-step} successively for
\[
r=5,6,\dots,m-2.
\]
This shows that
\[
P_i\in C \qquad\text{for all } i\in [-(m-1),m-1]
\]
and Lemma~\ref{lem:half-period-grid} gives $P_m\in C$.
Thus $C$ contains every residue class in our chosen system of representatives for $\Z/n\Z$, hence
\[
P_i\in C \qquad\text{for all } i\in \Z/n\Z.
\]
\end{proof}

Combining the odd and even cases, we obtain the main conclusion of this part of the argument.

\begin{theorem}\label{thm:all-points-on-cubic}
Let $n\ge 12$, let $k$ be any field, and let
\[
(P_i)_{i\in \Z/n\Z}\subset \PP^2(k)
\]
be a realization of $\cT_n$. Then there exists a cubic $C\subset \PP^2_k$
containing all the points $P_i$.
More precisely, $C$ is the unique cubic passing through the ten-point window
\[
P_{-4},P_{-3},\dots,P_5.
\]
\end{theorem}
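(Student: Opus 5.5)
The plan is to split by the parity of $n$ and cite the two propagation results already proved. Since $n\ge 12\ge 10$, Corollary~\ref{cor:unique-window-cubic} gives a unique cubic $C$ through the ten-point window $P_{-4},\dots,P_5$. This $C$ is the cubic that appears in the last sentence of the statement. Uniqueness holds because Proposition~\ref{prop:seed-pencil} identifies the cubics through $P_{-4},\dots,P_4$ with the pencil $\langle D_1,D_2\rangle$, and evaluation at $P_5$ is a nonzero functional on that pencil. So it remains only to show that this particular $C$ contains every $P_i$.

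If $n$ is odd, then $n=2m+1\ge 13$, since $n\ge 12$ and $n$ odd. Proposition~\ref{prop:odd-all-on-cubic} then applies directly and gives $P_i\in C$ for all $i$.

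If $n$ is even, write $n=2m$ with $m\ge 6$; Proposition~\ref{prop:even-all-on-cubic} gives the same conclusion. Both propositions are stated for exactly the cubic of Corollary~\ref{cor:unique-window-cubic}, so the "more precisely" clause comes for free.

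The only step needing any care is checking that the parity cases match the hypotheses of the two propositions. For the even case this means confirming $m\ge 6$, which the propagation lemmas and the half-period grid $\Gamma_m$ require, and this holds because $n\ge 12$. For the odd case it means confirming $n\ge 13$, which is automatic. Because of the hypothesis $n\ge 12$, the small levels $n=10,11$ are excluded here and are treated separately later in the paper.
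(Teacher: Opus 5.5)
Your proof is correct and is essentially the paper's own argument. You use Corollary~\ref{cor:unique-window-cubic} for existence and uniqueness of the window cubic, then Proposition~\ref{prop:odd-all-on-cubic} (odd $n\ge 13$) and Proposition~\ref{prop:even-all-on-cubic} (even $n=2m$ with $m\ge 6$) to show it contains every marked point, and your check that $n\ge 12$ meets the hypotheses of both propositions is exactly what is needed.
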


\begin{proof}
The existence and uniqueness of a cubic through the ten-point window is Corollary~\ref{cor:unique-window-cubic}. Proposition~\ref{prop:odd-all-on-cubic} shows that this cubic contains all marked points when $n$ is odd, and Proposition~\ref{prop:even-all-on-cubic} shows the same when $n$ is even.
\end{proof}

\section{Irreducibility and smoothness}\label{sec:irreducible-smooth}

In this section we assume that $n \ge 12$. We continue to assume that $k$ is a field, and that
\[
(P_i)_{i\in \Z/n\Z}\subset \PP^2(k)
\]
is a realization of $\cT_n$. Let $C\subset \PP^2_k$ be the cubic provided by Theorem~\ref{thm:all-points-on-cubic}, so that
\[
P_i\in C \qquad\text{for all } i\in \Z/n\Z.
\]

Our goal is to prove that $C$ is irreducible and that every marked point $P_i$ lies in the smooth locus $C^\sm$.

\subsection{A combinatorial lemma}

\begin{lemma}\label{lem:zero-sum-triple-outside-small-set}
Let $n\ge 12$, and let $S\subset \Z/n\Z$ be a subset with $|S|\le 3$.
Then there exist three distinct elements
$a,b,c\in (\Z/n\Z) \setminus S$ such that $a+b+c=0$.
\end{lemma}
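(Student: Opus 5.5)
The plan is a simple counting argument on the zero-sum triples of $\Z/n\Z$. First count how many zero-sum triples of distinct elements there are. Then bound how many of them can meet $S$, and show the first number exceeds the second when $n\ge 12$.

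\emph{Lower bound on the total.} For each $a\in\Z/n\Z$, consider the unordered pairs $\{b,c\}$ with $b+c=-a$ and $a,b,c$ pairwise distinct. There are $n$ ordered pairs $(b,-a-b)$. We discard those with $b=c$ (at most $2$ values of $b$, since $2b=-a$ has at most two solutions), those with $b=a$ (one value), and those with $c=a$ (one value). So at least $(n-4)/2$ unordered pairs remain. Each zero-sum triple is counted three times as $a$ varies, so the number $N$ of zero-sum triples of distinct elements satisfies
\[
N \ge \frac{n(n-4)}{6}.
\]
For $n=12$ this gives $N\ge 16$.

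\emph{Upper bound on triples meeting $S$.} By the observation after the definition of $\cT_n$, two distinct zero-sum triples share at most one element. Also, any fixed $s\in\Z/n\Z$ lies in at most $\lfloor (n-1)/2\rfloor$ zero-sum triples, since the other two elements form a pair $\{b,-s-b\}$ in $\Z/n\Z\setminus\{s\}$. Hence the number of zero-sum triples meeting $S$ is at most
\[
3\cdot\frac{n-1}{2}.
\]

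\emph{Comparison.} It remains to check that
\[
\frac{n(n-4)}{6} > \frac{3(n-1)}{2},
\]
which is equivalent to $n^2-13n+9>0$. This holds for $n\ge 13$ but fails at $n=12$, so $n=12$ needs separate handling. I expect this to be the only delicate point.

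\emph{The case $n=12$.} The fix is to sharpen the per-element count. For $n$ even, $s$ lies in exactly $(n-2)/2$ triples when $s$ is odd, and in $(n-4)/2$ triples when $s$ is even, because then both $b=c$ and $b=s$ (or $c=s$) must be excluded. So for $n=12$ each $s$ lies in at most $5$ triples, and the triples meeting $S$ number at most $15<16\le N$.

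\emph{Fallback.} If this exact counting looks fiddly, one can instead exhibit explicit triples directly. For example, take the disjoint zero-sum triples $\{1,2,-3\}$, $\{4,5,-9\}$, $\{-1,-2,3\}$, $\{-4,-5,9\}$, suitably adjusted modulo $n$ so that they are pairwise disjoint. Four pairwise disjoint zero-sum triples cannot all meet a set $S$ with $|S|\le 3$, so one of them avoids $S$. Finding four pairwise disjoint zero-sum triples of distinct elements for every $n\ge 12$ is easy: use $\{1,2,-3\}$, $\{-1,-2,3\}$, $\{4,5,-9\}$, $\{-4,-5,9\}$ for $n$ large, and check the finitely many small $n$ by hand. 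This disjoint-triples version may be the cleanest route to write up.
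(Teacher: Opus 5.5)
Your main counting argument is correct and is essentially the paper's proof: you compare the total number of zero-sum triples of distinct elements with the number of such triples meeting $S$. The bookkeeping differs slightly. The paper counts the total exactly as $(n^2-3n+2\gcd(n,3))/6$ (which is $19$ for $n=12$) and uses the cruder bound of $n/2$ triples per element, so the single inequality $n^2-12n+2\gcd(n,3)>0$ covers every $n\ge 12$. You use the weaker lower bound $n(n-4)/6$ and compensate with the sharper per-element bound $\lfloor (n-1)/2\rfloor$. You dropped the floor in your comparison step, though. Keeping it gives at most $3\cdot 5=15<16$ triples meeting $S$ when $n=12$, so no separate case is actually needed.

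Two side claims are wrong and should stay out of a write-up.

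First, your ``exact'' count $(n-4)/2$ for even $s$ fails when $3s=0$. In that case $b=s$ is one of the two solutions of $2b=-s$, so the exclusions overlap and $s$ lies in $(n-2)/2$ triples. In $\Z/12\Z$, for example, each of $0,4,8$ lies in $5$ triples, namely $\{0,b,-b\}$ for $b=1,\dots,5$ in the case of $0$. This is harmless, since you only use ``at most $5$''.

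Second, the fallback cannot work for $n=12$. Four pairwise disjoint triples in $\Z/12\Z$ would partition the whole group. But the elements of $\Z/12\Z$ sum to $66\equiv 6\not\equiv 0$, so four pairwise disjoint zero-sum triples do not exist there. Stick with the counting argument.
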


\begin{proof}
The number of ordered triples $(a,b,c)\in(\Z/n\Z)^3$ with $a+b+c=0$
is $n^2$.  Among these, the triples with at least two equal entries are those
with $a=b$, or $a=c$, or $b=c$.  Each of these three conditions gives $n$
ordered triples.  Their pairwise intersections coincide with the set of triples
$a=b=c$, which has $\gcd(n,3)$ elements.  Hence the number of ordered
zero-sum triples with $a,b,c$ pairwise distinct is
\[
        n^2-3n+2\gcd(n,3).
\]
Dividing by $6$, the number of unordered zero-sum triples of distinct elements is
\[
        \frac{n^2-3n+2\gcd(n,3)}{6}.
\]

Now fix an element $s\in\Z/n\Z$.  The number of unordered zero-sum triples of distinct elements containing $s$ is at most $n/2$: indeed, once $s$ is fixed, the other two elements have the form $b$ and $-s-b$, and the unordered pair $\{b,-s-b\}$ is determined up to interchanging its two entries.
Therefore the number of unordered zero-sum triples meeting $S$ is at most $3n/2$.
For $n\ge 12$, one easily checks that
\[
        \frac{n^2-3n+2\gcd(n,3)}{6}>\frac{3n}{2}.
\]
(Indeed, this is equivalent to
\[
         n^2-12n+2\gcd(n,3)>0,
\]
which is clear for $n\ge 13$, and for $n=12$ gives $144-144+6>0$.)
Hence there is a zero-sum triple disjoint from $S$, as desired.
\end{proof}

\subsection{Irreducibility}

\begin{proposition}\label{prop:irreducible}
The cubic $C$ is irreducible.
\end{proposition}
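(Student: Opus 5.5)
We argue by contradiction. Suppose $C$ is reducible. Then, after base change to $\bar k$ if necessary (irreducibility over $k$ is what we need, but a reducible cubic over $k$ already has a $k$-rational line component or decomposes into a line and a conic), $C$ contains a line $\ell$ as a component, or is a union of three lines. In every case, $C$ contains a line $\ell\subset \PP^2_k$ defined over $k$: a reducible cubic over $k$ has a component of degree $1$ defined over $k$, since the degrees of its $k$-irreducible components sum to $3$. So $C=\ell\cup Q$ with $Q$ a (possibly reducible, possibly non-reduced) conic.

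By Lemma~\ref{lem:no-four-collinear}, $\ell$ contains at most three marked points. Let $S\subset\Z/n\Z$ be the set of indices $i$ with $P_i\in\ell$, so $|S|\le 3$. All remaining $P_i$, $i\notin S$, lie on the conic $Q$.

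By Lemma~\ref{lem:zero-sum-triple-outside-small-set}, there exist distinct $a,b,c\notin S$ with $a+b+c=0$. Then $P_a,P_b,P_c$ are three distinct collinear points on $Q$, lying on a line $\ell'$. A line meeting a conic in three distinct points must be a component of it by B\'ezout, so $Q=\ell'\cup\ell''$ for some line $\ell''$. Now $C=\ell\cup\ell'\cup\ell''$ is a union of three lines. Each contains at most three marked points by Lemma~\ref{lem:no-four-collinear}, so $C$ contains at most $9<n$ marked points, contradicting Theorem~\ref{thm:all-points-on-cubic}. (If some of the lines coincide, the count only decreases.)

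The only delicate point is the descent step: ensuring a $k$-rational line component exists, so that Lemma~\ref{lem:no-four-collinear} applies directly. That lemma applies to any line, and the marked points are $k$-rational, so one could also simply work over $\bar k$. The geometric components of $C$ are then lines and conics, and the same counting works. If $\bar C$ contains an irreducible conic, that conic contains no three collinear points, so the marked points on it avoid every zero-sum triple. The line component contains at most three marked points, so this contradicts Lemma~\ref{lem:zero-sum-triple-outside-small-set}. Otherwise $\bar C$ is a union of lines, which carry at most $9$ marked points. Geometric irreducibility of $C$ follows as well, which may be useful later.
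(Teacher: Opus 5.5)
Your proof is correct and uses the same ingredients as the paper: Lemma~\ref{lem:no-four-collinear}, Lemma~\ref{lem:zero-sum-triple-outside-small-set}, and B\'ezout. The only difference is the order of the steps: the paper first rules out three lines by the count $3\cdot 3<n$ and then gets a contradiction for a line plus an irreducible conic, whereas you let B\'ezout force the conic to split and then apply the same count; your remark that the argument also yields geometric irreducibility directly is a harmless refinement (the paper obtains geometric integrality afterwards from a smooth $k$-point).
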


\begin{proof}
We first rule out the possibility that $C$ is supported on a union of three lines.
By Lemma~\ref{lem:no-four-collinear}, each line contains at most three distinct marked points. Hence a union of three lines can contain at most nine marked points. Since $C$ contains all $n\ge 12$ points $P_i$, this is impossible.
Thus, if $C$ were reducible, it would have to be of the form $C=Q\cup L$,
where $L$ is a line and $Q$ is a conic. Since the case of three lines has already been excluded, the conic $Q$ must be irreducible.

Let
\[
S:=\{\,i\in \Z/n\Z : P_i\in L\,\}.
\]
By Lemma~\ref{lem:no-four-collinear}, the line $L$ contains at most three marked points, so $|S|\le 3$.
Applying Lemma~\ref{lem:zero-sum-triple-outside-small-set}, we obtain distinct
$a,b,c\notin S$ such that $a+b+c=0$.
The corresponding three points $P_a,P_b,P_c$ all lie on the conic $Q$.  Since their indices sum to zero, these three points are collinear in the realization of $\cT_n$.
But then there is a line which meets the irreducible conic $Q$ in three distinct points, which is impossible by B\'ezout's theorem. This contradiction proves that $C$ is irreducible.
\end{proof}

\subsection{Smoothness of the marked points}

\begin{lemma}\label{lem:through-each-point-a-triple}
Let $n\ge 5$, let $k$ be a field, and let
\[
(P_i)_{i\in \Z/n\Z}\subset \PP^2(k)
\]
be a realization of the elliptic matroid $\cT_n$. Then for each $i\in \Z/n\Z$, there exists $j \in \Z/n\Z$ such that the elements $i,j,-i-j$
of $\Z/n\Z$ are distinct.
Equivalently, every marked point $P_i$ lies on a line containing two other distinct marked points of the configuration.
\end{lemma}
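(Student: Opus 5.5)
This is a counting argument in $\Z/n\Z$. Fix $i$. We need to choose $j$ so that $j\ne i$, $-i-j\ne i$, and $-i-j\ne j$. These conditions read
\[
j\ne i,\qquad j\ne -2i,\qquad 2j\ne -i.
\]

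First count the excluded values of $j$. The first two conditions exclude at most two residues. The congruence $2j=-i$ has at most two solutions in $\Z/n\Z$: it has none or one when $n$ is odd, and none or exactly two when $n$ is even, since the solutions then differ by $n/2$. So at most $4$ residues are excluded.

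Because $n\ge5$, at least one $j\in\Z/n\Z$ survives. For that $j$, the three elements $i$, $j$, $-i-j$ are pairwise distinct.

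For the geometric reformulation, the triple $\{i,j,-i-j\}$ sums to $0$ and consists of distinct elements, so it is a non-basis of $\cT_n$. Hence $P_i,P_j,P_{-i-j}$ are collinear. By Lemma~\ref{lem:marked-points-distinct} these three points are pairwise distinct, so $P_i$ lies on a line containing two other distinct marked points.

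The only step needing care is the even case, where $2j=-i$ can have two solutions; this is exactly why the bound is $4$ rather than $3$, and why the hypothesis is $n\ge5$ rather than $n\ge4$.
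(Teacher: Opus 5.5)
Your proposal is correct and follows essentially the same argument as the paper: you exclude the values $j=i$, $j=-2i$, and the at most two solutions of $2j=-i$, and then use $n\ge5$ to find a surviving $j$. Your explicit appeal to Lemma~\ref{lem:marked-points-distinct} for the geometric reformulation is a small point the paper leaves implicit, and your remark that the statement fails for $n=4$ is accurate: for $i=2$ all four residues are excluded.
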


\begin{proof}
Fix $i\in \Z/n\Z$. We seek $j\in \Z/n\Z$ such that the three elements $i,j,-i-j$ are pairwise distinct.
The bad values of $j$ are exactly those for which one of the following holds:
\[
j=i,\qquad -i-j=i,\qquad -i-j=j.
\]
These conditions are equivalent to
\[
j=i,\qquad j=-2i,\qquad 2j=-i.
\]

The first two each exclude one value of $j$. The equation $2j=-i$
has at most two solutions in $\Z/n\Z$. Hence altogether there are at most four bad values of $j$.

Since $n\ge 5$, there exists some $j\in \Z/n\Z$ avoiding all these bad values. For this choice of $j$, the three elements $i,j,-i-j$ 
are pairwise distinct and sum to zero. Therefore the corresponding three points are collinear in the realization of $\cT_n$.
\end{proof}

\begin{proposition}\label{prop:smooth-points}
Every marked point $P_i$ lies in the smooth locus $C^\sm$.
\end{proposition}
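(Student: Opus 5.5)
The idea is to argue by contradiction using B\'ezout's theorem on an irreducible cubic together with Lemma~\ref{lem:through-each-point-a-triple}. Suppose some marked point $P_i$ is a singular point of $C$. By Proposition~\ref{prop:irreducible}, $C$ is an irreducible plane cubic. Any line $\ell$ through $P_i$ is then not a component of $C$, and the intersection multiplicity of $\ell$ and $C$ at $P_i$ is at least $2$, because $P_i$ is singular. By B\'ezout's theorem (after base change to $\bar k$), the total intersection length of $\ell\cap C$ is $3$. Consequently $\ell$ meets $C$ in at most one further point, counted with multiplicity.

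Now apply Lemma~\ref{lem:through-each-point-a-triple}, which uses $n\ge 12\ge 5$. It gives $j$ with $i,j,-i-j$ pairwise distinct, so $P_i,P_j,P_{-i-j}$ lie on a common line $\ell$. Lemma~\ref{lem:marked-points-distinct} shows these three points are distinct, and by Theorem~\ref{thm:all-points-on-cubic} all three lie on $C$. The intersection $\ell\cap C$ therefore has length at least $2+1+1=4>3$, which is a contradiction. Hence every $P_i$ lies in $C^\sm$.

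The only point needing care is the claim that a singular point contributes multiplicity at least $2$ to every line through it. This holds because the local equation of $C$ at $P_i$ lies in $\mathfrak m^2$, so its restriction to $\ell$ vanishes to order at least $2$. We also need $\ell\not\subset C$, which follows from irreducibility since $C$ has degree $3$. Both facts are standard, so I expect no real obstacle beyond invoking Proposition~\ref{prop:irreducible}.
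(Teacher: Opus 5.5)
Your proof is correct and follows the paper's argument essentially verbatim. A singular point forces intersection multiplicity at least $2$ with every line through it, while Lemma~\ref{lem:through-each-point-a-triple} supplies a line through $P_i$ containing two further distinct points of $C$, contradicting B\'ezout; your extra remarks (that $\ell\not\subset C$ by irreducibility, and that the local equation lies in $\mathfrak m^2$ at the rational point $P_i$) simply make explicit what the paper leaves implicit.
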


\begin{proof}
Fix $i\in \Z/n\Z$. By Lemma~\ref{lem:through-each-point-a-triple}, there exist distinct elements $i,j,-i-j$ 
such that the three corresponding points $P_i$, $P_j$, and $P_{-i-j}$ are collinear.

Suppose for the sake of contradiction that $P_i$ is singular on the irreducible cubic $C$. Then every line through $P_i$ meets $C$ with intersection multiplicity at least $2$ at $P_i$. 
Since a line has total intersection number $3$ with a cubic, such a line can meet $C$ in at most one additional point.
But the line through $P_i$, $P_j$, and $P_{-i-j}$ contains two further distinct points of $C$, a contradiction.

Therefore $P_i$ is smooth on $C$. Since $i$ was arbitrary, every marked point lies in $C^\sm$.
\end{proof}

Combining the results of this section with Theorem~\ref{thm:all-points-on-cubic}, we obtain:

\begin{theorem}\label{thm:cubic-irreducible-smooth}
Let $n\ge 12$, let $k$ be a field, and let
\[
(P_i)_{i\in \Z/n\Z}\subset \PP^2(k)
\]
be a realization of $\cT_n$. Then there exists an irreducible cubic $C\subset \PP^2_k$
such that
\[
P_i\in C^\sm(k)
\qquad\text{for all } i\in \Z/n\Z.
\]
More precisely, $C$ is the unique cubic passing through the ten-point window
\[
P_{-4},P_{-3},\dots,P_5.
\]
\end{theorem}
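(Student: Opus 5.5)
This theorem is a direct assembly of results already proved, so the proof is short. I will proceed in three steps.

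First, I will apply Theorem~\ref{thm:all-points-on-cubic}. Since $n\ge 12$, Corollary~\ref{cor:unique-window-cubic} gives the unique cubic $C$ through the window $P_{-4},\dots,P_5$. Propositions~\ref{prop:odd-all-on-cubic} and~\ref{prop:even-all-on-cubic}, according to the parity of $n$, then show that $C$ contains every $P_i$. This establishes both the existence and the ``more precisely'' uniqueness clause.

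Second, I will invoke Proposition~\ref{prop:irreducible} to conclude that $C$ is irreducible. That argument rules out a union of three lines, since such a union carries at most nine marked points by Lemma~\ref{lem:no-four-collinear}. It rules out a line plus an irreducible conic using Lemma~\ref{lem:zero-sum-triple-outside-small-set}, which produces a collinear zero-sum triple lying on the conic, contradicting B\'ezout. One subtlety to check is a possible non-reduced component. If $C$ were a double line plus a line, or a triple line, its support would still be at most three lines, and the nine-point bound applies to the support. So ``supported on a union of three lines'' covers all of these cases.

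Third, I will invoke Proposition~\ref{prop:smooth-points} to get $P_i\in C^{\sm}$. For each $i$, Lemma~\ref{lem:through-each-point-a-triple} (valid since $n\ge 5$) gives a line through $P_i$ and two further marked points. A singular point would absorb intersection multiplicity at least $2$, which is incompatible with total intersection $3$ unless the line is a component of $C$. Irreducibility excludes that, since $C$ is not a line.

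The only point I would double-check is this last one: that the intersection-multiplicity count is legitimate, i.e.\ the line is not contained in $C$. Irreducibility of the cubic settles it. Rationality of the points over $k$ is automatic, since each $P_i\in\PP^2(k)$, so $P_i\in C^{\sm}(k)$.
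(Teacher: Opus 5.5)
Your proposal is correct and matches the paper's proof, which likewise cites Theorem~\ref{thm:all-points-on-cubic} for existence and uniqueness, then Proposition~\ref{prop:irreducible} for irreducibility and Proposition~\ref{prop:smooth-points} for smoothness at the marked points. Your two extra checks are accurate and only make explicit what the paper leaves implicit: non-reduced cubics are covered by the ``supported on three lines'' case, and irreducibility of $C$ keeps the chosen line from being a component.
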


\begin{proof}
The existence and uniqueness of a cubic containing all marked points is Theorem~\ref{thm:all-points-on-cubic}. Proposition~\ref{prop:irreducible} shows that this cubic is irreducible, and Proposition~\ref{prop:smooth-points} shows that every marked point is smooth.
\end{proof}

Since the cubic $C$ whose existence is guaranteed by
Theorem~\ref{thm:cubic-irreducible-smooth} is irreducible and has a
smooth $k$-rational point, it is geometrically integral by
\cite[Tag~0CDW]{Stacks}, and hence geometrically irreducible.
Thus its smooth locus carries the usual algebraic group structure once
a smooth identity point is chosen.

\section{Recovering the group law}\label{sec:grouplaw}

We retain the hypotheses of Theorem~\ref{thm:cubic-irreducible-smooth}. Thus $C$ is an irreducible plane cubic, all marked points lie in $C^\sm(k)$, and the collinearity relations among the marked points are exactly those prescribed by the matroid $\cT_n$.

We now endow the smooth locus $C^\sm$ with a group law whose identity element is $P_0$, and show that the present plane embedding already realizes $P_0$ as the distinguished point cut out by the line bundle $\mathcal O_C(1)\cong \mathcal O_C(3P_0)$. In particular, the given configuration itself is of the standard form $P_i=iP_1$.

\subsection{The line-section constant}

Let $G:=C^\sm$ be the smooth locus of $C$, equipped with the algebraic group structure whose identity element is $P_0$. We write the group law additively. For each $i\in \Z/n\Z$, let
$x_i\in G(k)$ denote the group element corresponding to the point $P_i$, so that $x_0 = 0$. 

Let $H$ be the divisor cut out on $C$ by a line in $\PP^2$. Under the standard identification \cite{ART} of the smooth locus with the generalized Jacobian,
\[
G(k)\cong \Pic^0(C)(k),\qquad Q\longmapsto [Q-P_0],
\]
define
\[
\lambda := [H-3P_0]\in G(k).
\]

\begin{lemma}\label{lem:line-section-constant}
Let $Q,R,S\in C^\sm(k)$ be three distinct collinear points. Then
\[
Q+R+S=\lambda
\]
in the group law on $G$.
\end{lemma}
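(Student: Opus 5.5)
The plan is to translate collinearity into a linear equivalence of divisors and then read it in $\Pic^0(C)$ via the identification $G(k)\cong\Pic^0(C)(k)$, $Q\mapsto[Q-P_0]$.

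First, let $\ell$ be the line through $Q,R,S$. Since $C$ is an irreducible cubic, $\ell$ is not a component of $C$. Hence $\ell$ cuts out on $C$ an effective Cartier divisor of degree $3$, namely $\ell\cdot C$. The points $Q,R,S$ are distinct and smooth, so each appears in $\ell\cdot C$ with multiplicity at least $1$. Because the total degree is $3$ (B\'ezout, after base change to $\bar k$ if needed), we get $\ell\cdot C=Q+R+S$ exactly, as a Cartier divisor supported in $C^\sm$.

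Next, any two lines $\ell,\ell'$ not containing a component of $C$ cut out linearly equivalent divisors, since $\ell/\ell'$ is a rational function on $C$. Thus $\ell\cdot C\sim H$, and $Q+R+S-3P_0\sim H-3P_0$. This is a degree-zero Cartier divisor supported on the smooth locus. The Abel--Jacobi type identification of \cite{ART} is a group isomorphism, and it sends a smooth point $Q$ to $[Q-P_0]$. It therefore sends $Q+R+S$ (the group sum) to $[Q-P_0]+[R-P_0]+[S-P_0]=[Q+R+S-3P_0]=[H-3P_0]$. Applying the inverse isomorphism gives $Q+R+S=\lambda$.

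The only delicate point is the nodal case. There one must work with Cartier divisors supported on $C^\sm$ and the generalized Picard group, and check that $\lambda=[H-3P_0]$ is well defined, which needs $H$ to be chosen avoiding the singular point. A line avoiding the node exists after a finite extension, or one can use that $\cO_C(1)$ is a line bundle and define $\lambda$ as the class of $\cO_C(1)\otimes\cO_C(-3P_0)$ in $\Pic^0(C)$. I would phrase everything in terms of line bundles, $\cO_C(Q+R+S)\cong\cO_C(1)$, to make this independent of the choice of $H$. This is the step I expect to require the most care, though it is standard.
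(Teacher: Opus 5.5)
Your proposal is correct and is essentially the paper's argument: the line through $Q,R,S$ cuts out exactly the divisor $Q+R+S$, this divisor is linearly equivalent to $H$, and the identification $G(k)\cong\Pic^0(C)(k)$, $Q\mapsto[Q-P_0]$, turns this into $Q+R+S=\lambda$. Your extra care in the nodal case (defining $\lambda$ as the class of $\cO_C(1)\otimes\cO_C(-3P_0)$) only makes explicit what the paper leaves implicit.
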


\begin{proof}
The line through $Q,R,S$ cuts out the divisor $Q+R+S$
on $C$, and this divisor is linearly equivalent to $H$. Therefore
\[
[Q-P_0]+[R-P_0]+[S-P_0]
=
[H-3P_0]
=
\lambda
\]
in $\Pic^0(C)(k)$. Under the identification with $G(k)$, this is exactly the claimed identity.
\end{proof}

In particular, whenever $a,b,c\in \Z/n\Z$ are distinct and satisfy $a+b+c=0$,
the corresponding marked points are collinear, and hence $x_a+x_b+x_c=\lambda$.

\subsection{A window computation}

\begin{lemma}[Window computation]\label{lem:window-computation}
Let $C\subset \PP^2_k$ be an irreducible plane cubic, and suppose that
$P_{-5},P_{-4},\ldots,P_5$ are pairwise distinct points of $C^\sm(k)$.  Equip
$G=C^\sm$ with the group law having identity $P_0$, and let
$x_i\in G(k)$ denote the element corresponding to $P_i$.  Let $H$
be the divisor cut out on $C$ by a line in $\PP^2$, and define $\lambda:=[H-3P_0]\in G(k)$.
Assume that the triples summing to zero as integers among these points are collinear.  Then $\lambda=0$,
$x_i=i\cdot x_1$ for every $i\in \{-5,-4,-3,-2,-1,0,1,2,3,4,5\}$, and
$\mathcal O_C(1)\cong \mathcal O_C(3P_0)$.
\end{lemma}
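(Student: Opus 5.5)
We work in the abelian group $G(k)$ and use only Lemma~\ref{lem:line-section-constant}: for every triple of distinct integers $a,b,c\in[-5,5]$ with $a+b+c=0$ we have $x_a+x_b+x_c=\lambda$. The goal is to derive $\lambda=0$ and $x_i=ix_1$ from these linear relations alone. We have $x_0=0$ and eleven unknowns, but many relations.

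\textbf{Step 1: pairs $\{i,-i\}$.} Using the triples $\{0,i,-i\}$ for $i=1,\dots,5$ gives
\[
x_{-i}=\lambda-x_i\qquad(1\le i\le 5).
\]

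\textbf{Step 2: positive indices.} Triples of the form $\{a,b,-(a+b)\}$ with $a,b>0$ distinct and $a+b\le 5$ give
\[
x_a+x_b=x_{a+b}\qquad\text{for }\{a,b\}=\{1,2\},\{1,3\},\{1,4\},\{2,3\}.
\]
The $\lambda$ terms cancel here because $x_{-(a+b)}=\lambda-x_{a+b}$. Write $y=x_1$ and $z=x_2$. Then
\[
x_3=y+z,\qquad x_4=2y+z,\qquad x_5=3y+z,
\]
and the relation $x_2+x_3=x_5$ gives $2z+y=3y+z$, so $z=2y$. Hence $x_i=iy$ for $0\le i\le5$.

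\textbf{Step 3: determining $\lambda$.} The relations so far are all symmetric under $x\mapsto\lambda-x$, so $\lambda$ must be pinned down by a triple with two indices of the same sign. A typical one is $\{-1,-2,3\}$, which gives
\[
(\lambda-y)+(\lambda-2y)+3y=\lambda,
\]
so $2\lambda=\lambda$ and therefore $\lambda=0$. This step is the one to watch: we must make sure that a relation breaking the symmetry actually exists among the available triples. It does, and it is exactly where the hypothesis that $\{-1,-2,3\}$ is collinear is used; Steps 1 and 2 alone only determine the points up to $\lambda$.

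\textbf{Conclusion.} With $\lambda=0$ we get $x_{-i}=-iy$, so $x_i=ix_1$ for all $i\in[-5,5]$. Finally, $\lambda=[H-3P_0]=0$ in $\Pic^0(C)(k)$ means $H\sim 3P_0$, that is, $\mathcal O_C(1)\cong\mathcal O_C(3P_0)$. The distinctness hypothesis is needed only so that each triple used consists of three distinct points, which makes Lemma~\ref{lem:line-section-constant} applicable.
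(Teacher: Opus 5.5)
Your proof is correct and is essentially the paper's argument: both solve the system of chord relations $x_a+x_b+x_c=\lambda$ in $G(k)$, and your derivation of $\lambda=0$ rests on the same five triples the paper uses for that step, namely $(0,i,-i)$ for $i=1,2,3$ together with $(1,2,-3)$ and $(-1,-2,3)$. The only difference is the order of elimination. You first use the triples $(a,b,-(a+b))$ with $a,b>0$, where $\lambda$ cancels, to get $x_i=ix_1$ for $0\le i\le 5$. The paper instead kills $\lambda$ first and then uses the negated triples $(-1,-3,4)$, $(-1,-4,5)$, $(-2,-3,5)$.
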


\begin{proof}
We use only collinear triples among the points in the window $[-5,5]$.
From the dependent triples
\[
(0,1,-1),\qquad (0,2,-2),\qquad (-1,-2,3),\qquad (0,3,-3),\qquad (1,2,-3),
\]
Lemma~\ref{lem:line-section-constant} gives
\begin{align}
x_1+x_{-1} &= \lambda, \label{eq:local1}\\
x_2+x_{-2} &= \lambda, \label{eq:local2}\\
x_{-1}+x_{-2}+x_3 &= \lambda, \label{eq:local3}\\
x_3+x_{-3} &= \lambda, \label{eq:local4}\\
x_1+x_2+x_{-3} &= \lambda. \label{eq:local5}
\end{align}
From \eqref{eq:local1} and \eqref{eq:local2} we obtain
\[
x_{-1}=\lambda-x_1,
\qquad
x_{-2}=\lambda-x_2.
\]
Substituting these into \eqref{eq:local3} yields
\[
x_3=\lambda-(\lambda-x_1)-(\lambda-x_2)=x_1+x_2-\lambda.
\]
Then \eqref{eq:local4} gives
\[
x_{-3}=\lambda-x_3=2\lambda-x_1-x_2.
\]
Finally, substituting this into \eqref{eq:local5}, we get
\[
x_1+x_2+(2\lambda-x_1-x_2)=\lambda,
\]
hence $\lambda=0$.  Thus $[H-3P_0]=0$, which is equivalent to
\[
\mathcal O_C(1)\cong \mathcal O_C(3P_0).
\]

Now \eqref{eq:local1} and \eqref{eq:local2} reduce to
\[
x_{-1}=-x_1,
\qquad
x_{-2}=-x_2.
\]
Equation \eqref{eq:local3} becomes $x_3=x_1+x_2$, and then \eqref{eq:local4}
gives
\[
x_{-3}=-x_1-x_2.
\]
Next use the dependent triple $(-1,-3,4)$.  Since $\lambda=0$,
Lemma~\ref{lem:line-section-constant} gives
\[
x_{-1}+x_{-3}+x_4=0.
\]
Substituting the known values, we obtain
\[
-x_1+(-x_1-x_2)+x_4=0,
\]
so
\[
x_4=2\cdot x_1+x_2.
\]
Then the dependent triple $(0,4,-4)$ gives
\[
x_{-4}=-x_4=-2\cdot x_1-x_2.
\]
Now use the dependent triple $(-1,-4,5)$.  Again $\lambda=0$ gives
\[
x_{-1}+x_{-4}+x_5=0,
\]
hence
\[
-x_1+(-2\cdot x_1-x_2)+x_5=0,
\]
so
\[
x_5=3\cdot x_1+x_2.
\]
On the other hand, the dependent triple $(-2,-3,5)$ gives
\[
x_{-2}+x_{-3}+x_5=0.
\]
Substituting the known values,
\[
(-x_2)+(-x_1-x_2)+x_5=0,
\]
so
\[
x_5=x_1+2x_2.
\]
Comparing the two formulas for $x_5$, we obtain
\[
3\cdot x_1+x_2=x_1+2x_2,
\]
hence $x_2=2\cdot x_1$.  Substituting back, we conclude that
\[
x_3=3\cdot x_1,
\qquad
x_4=4\cdot x_1,
\qquad
x_5=5\cdot x_1,
\]
and therefore
\[
x_{-2}=-2\cdot x_1,
\qquad
x_{-3}=-3\cdot x_1,
\qquad
x_{-4}=-4\cdot x_1.
\]
Finally, the dependent triple $(0,5,-5)$ gives
\[
x_5+x_{-5}=0,
\]
so
\[
x_{-5}=-x_5=-5\cdot x_1.
\]
Together with $x_{-1}=-x_1$ and $x_0=0$, this proves that
\[
x_i=i\cdot x_1
\qquad\text{for all } i\in \{-5,-4,-3,-2,-1,0,1,2,3,4,5\}.
\]
\end{proof}

\subsection{The odd case}

We first treat the case in which $n$ is odd.

\begin{proposition}\label{prop:group-law-odd}
Assume that $n=2m+1\ge 13$ is odd. Then
\[
x_i=i\cdot x_1
\qquad\text{for all } i\in \Z/n\Z.
\]
Moreover, $x_1$ has exact order $n$ in the group $G(k)$.
\end{proposition}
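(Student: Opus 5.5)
We will use Lemma~\ref{lem:window-computation} to get started and then push the relation $x_i=i\cdot x_1$ outward by induction on $|i|$, mirroring the propagation of Section~\ref{sec:odd-propagation}. We identify $\Z/n\Z$ with $\{-m,\dots,m\}$ as before.

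\textbf{Base case.} Since $n\ge 13$, the indices $-5,\dots,5$ are pairwise distinct in $\Z/n\Z$. By Lemma~\ref{lem:marked-points-distinct} the points $P_{-5},\dots,P_5$ are pairwise distinct, and by Theorem~\ref{thm:cubic-irreducible-smooth} they lie in $C^\sm(k)$. Any triple of distinct integers in $[-5,5]$ summing to $0$ also sums to $0$ in $\Z/n\Z$, so the corresponding points are collinear. The hypotheses of Lemma~\ref{lem:window-computation} therefore hold. It gives $\lambda=0$ and $x_i=i\cdot x_1$ for $|i|\le 5$.

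\textbf{Inductive step.} Suppose $x_i=i\cdot x_1$ for all $|i|\le r$, where $5\le r<m$. The triple $(-1,-r,r+1)$ consists of distinct integers summing to $0$, all lying in $[-m,m]$. Hence they are distinct in $\Z/n\Z$, and the points $P_{-1},P_{-r},P_{r+1}$ are collinear. Lemma~\ref{lem:line-section-constant} with $\lambda=0$ gives
\[
x_{r+1}=-x_{-1}-x_{-r}=(r+1)\cdot x_1 .
\]
The triple $(0,r+1,-(r+1))$ then gives $x_{-(r+1)}=-(r+1)\cdot x_1$. Running $r=5,\dots,m-1$ covers every residue, so $x_i=i\cdot x_1$ for all $i\in\Z/n\Z$, where the integer $i$ is the symmetric representative.

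\textbf{Exact order.} We first show $n\cdot x_1=0$. The triple $(1,m,-(m+1))$ is zero-sum in $\Z/n\Z$, and $-(m+1)\equiv m$. Since that repeats $m$, use instead $(1,m,m)$... which is also not distinct, so we pick another triple. Take $(2,m,-(m+2))$; here $-(m+2)\equiv m-1$, and $2,m,m-1$ are distinct because $m\ge 6$. These indices are zero-sum mod $n$, so the points are collinear and
\[
x_2+x_m+x_{m-1}=0 .
\]
Substituting the formula gives $(2+m+m-1)\cdot x_1=(2m+1)\cdot x_1=n\cdot x_1=0$. Exactness then follows from distinctness: if $d\cdot x_1=0$ with $0<d<n$, then $x_d=x_0$, meaning $P_d=P_0$. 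This contradicts Lemma~\ref{lem:marked-points-distinct}. So the order of $x_1$ is exactly $n$.

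\textbf{Main obstacle.} The delicate point is the bookkeeping between integer representatives and residues. The relation $x_i=i\cdot x_1$ is established for the integer representatives in $[-m,m]$, and we have to check that each collinearity we invoke uses indices that are genuinely distinct mod $n$ and zero-sum mod $n$. The wrap-around triple used to show $n\cdot x_1=0$ is the only place where reduction mod $n$ actually enters.
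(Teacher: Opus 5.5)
Your proof is correct and follows essentially the paper's argument: the window computation as base case, induction via collinear triples with $\lambda=0$ (you use $(-1,-r,r+1)$ and then $(0,r+1,-(r+1))$, where the paper subtracts the relations from $(1,j,-j-1)$ and $(0,j+1,-j-1)$), the triple $(2,m,m-1)$ to get $n\cdot x_1=0$, and distinctness of the marked points for exactness. In a final write-up, delete the abandoned attempt with $(1,m,-(m+1))$. Also note explicitly that $n\cdot x_1=0$ is what lets you pass from symmetric representatives to $x_d=d\cdot x_1$ for $m<d<n$.
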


\begin{proof}
By Lemma~\ref{lem:window-computation}, we already know that
\[
x_i=i\cdot x_1
\qquad\text{for all } i\in \{-5,-4,\dots,5\}.
\]

We now prove by induction that
\[
x_j=j\cdot x_1
\qquad\text{for } 0\le j\le m.
\]

For $j=0,1,2,3,4,5$ this is already known. Suppose inductively that
$x_j=j\cdot x_1$ for some integer $j$ with $5\le j\le m-1$.
Consider the two dependent triples
\[
(1,j,-j-1)
\qquad\text{and}\qquad
(0,j+1,-j-1).
\]
These are triples of distinct elements of $\Z/n\Z$, since
\[
1\le j\le m-1
\qquad\text{and}\qquad
n=2m+1.
\]
Because $\lambda=0$, Lemma~\ref{lem:line-section-constant} gives
\[
x_1+x_j+x_{-j-1}=0
\qquad\text{and}\qquad
x_{j+1}+x_{-j-1}=0.
\]
Subtracting the second equation from the first, we obtain
\[
x_{j+1}=x_j+x_1.
\]
By the inductive hypothesis, this becomes
\[
x_{j+1}=j\cdot x_1+x_1=(j+1)\cdot x_1.
\]

Thus, by induction,
\[
x_j=j\cdot x_1
\qquad\text{for all } 0\le j\le m.
\]

For negative indices, the dependent triple $(0,j,-j)$
gives
\[
x_j+x_{-j}=0,
\]
so
\[
x_{-j}=-x_j=-j\cdot x_1
\qquad\text{for } 1\le j\le m.
\]
Therefore
\[
x_i=i\cdot x_1
\qquad\text{for every } i\in \Z/n\Z.
\]

It remains to show that $x_1$ has exact order $n$. Consider the dependent triple $(m,m-1,2)$.
Since
\[
m+(m-1)+2=2m+1=n\equiv 0 \pmod n,
\]
and the three indices are distinct, Lemma~\ref{lem:line-section-constant} with $\lambda=0$ gives
\[
x_m+x_{m-1}+x_2=0.
\]
Substituting the already established formulas, we obtain
\[
m\cdot x_1+(m-1)\cdot x_1+2\cdot x_1 = n\cdot x_1 = 0.
\]
So $x_1$ is $n$-torsion.

Finally, the points $P_0,P_1,\dots,P_{n-1}$
are pairwise distinct in the realization, 
and by definition, $x_i\in C^\sm(k)$ is the group
element represented by the point $P_i$.
Hence the group elements
\[
0,x_1,2\cdot x_1,\dots,(n-1)\cdot x_1
\]
are pairwise distinct. Therefore the order of $x_1$ is exactly $n$.
\end{proof}

\subsection{The even case}

We next treat the even case.

\begin{proposition}\label{prop:group-law-even}
Assume that $n=2m\ge 12$ is even. Then
\[
x_i=i\cdot x_1
\qquad\text{for all } i\in \Z/n\Z.
\]
Moreover, $x_1$ has exact order $n$ in the group $G(k)$.
\end{proposition}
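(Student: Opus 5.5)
The plan follows the odd case, Proposition~\ref{prop:group-law-odd}, with the representatives $\{-(m-1),\dots,m\}$ of Section~\ref{sec:even-propagation}. Lemma~\ref{lem:window-computation} applies because $n\ge12$ makes $-5,\dots,5$ distinct in $\Z/n\Z$. It gives $\lambda=0$ and $x_i=i\cdot x_1$ for $|i|\le5$. By Lemma~\ref{lem:line-section-constant}, every zero-sum triple of distinct indices then satisfies $x_a+x_b+x_c=0$.

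\textbf{Positive indices.} I would show $x_j=j\cdot x_1$ for $0\le j\le m-1$ by induction on $j$. For $5\le j\le m-2$, use the triples $(1,j,-j-1)$ and $(0,j+1,-j-1)$. Their entries are distinct because $1<j$ and $-j-1\not\equiv 0,1,j,j+1$ when $j+1\le m-1<m$; the only delicate congruence is $j+1\equiv-(j+1)$, which would need $j+1=m$. Subtracting the two relations gives $x_{j+1}=x_j+x_1$, as in the odd case.

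\textbf{Negative indices.} For $1\le j\le m-1$, the triple $(0,j,-j)$ has distinct entries since $j\ne m$. It gives $x_{-j}=-j\cdot x_1$.

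\textbf{The half-period.} This is the main obstacle, because $(0,m,-m)$ is not a triple of distinct elements and cannot be used. Instead I would use the triple $(1,m-1,m)$: its sum is $2m\equiv0$, and its entries are distinct since $m\ge6$. It gives $x_m=-x_1-(m-1)\cdot x_1=-m\cdot x_1$.

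\textbf{Exact order.} I need a second relation involving $x_m$, or some triple that closes up modulo $n$. Take $(2,m-1,-(m+1))$ and note $-(m+1)\equiv m-1$, which repeats an entry, so that fails. Use $(2,m,m-2)$ instead: $2+m+(m-2)=2m\equiv0$, with distinct entries for $m\ge6$. It gives $2\cdot x_1+x_m+(m-2)\cdot x_1=0$, so $x_m=-m\cdot x_1$, which is consistent but not new. Next use $(m,m-1,1)$, already used; substituting $x_m=-m\cdot x_1$ there gives nothing new either.

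The remaining route is to compare with $(3,m-1,m-2)$, whose sum is $2m\equiv0$ and whose entries are distinct for $m\ge6$. It gives $3\cdot x_1+(m-1)\cdot x_1+(m-2)\cdot x_1=0$, that is, $2m\cdot x_1=0$, so $n\cdot x_1=0$. Then $-m\cdot x_1=m\cdot x_1$, so $x_m=m\cdot x_1$ and $x_i=i\cdot x_1$ holds for every $i\in\Z/n\Z$.

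Finally, the marked points are pairwise distinct by Lemma~\ref{lem:marked-points-distinct}, so the elements $i\cdot x_1$ for $0\le i<n$ are pairwise distinct. Hence $x_1$ has exact order $n$.
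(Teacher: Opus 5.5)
Your proof is correct and follows the paper's argument: the window computation, the induction via $(1,j,-j-1)$ and $(0,j+1,-j-1)$, the negative indices via $(0,j,-j)$, $x_m=-m\cdot x_1$ from $(1,m-1,m)$, and exact order from the distinctness of the marked points. The only difference is minor: you obtain $n\cdot x_1=0$ from the triple $(3,m-1,m-2)$, which avoids $x_m$ (much as the paper's odd case uses $(m,m-1,2)$), whereas the paper compares $(m,1,m-1)$ with $(m,-1,1-m)$ to get $x_m=\pm m\cdot x_1$; the abandoned attempts in your exact-order paragraph can simply be deleted.
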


\begin{proof}
By Lemma~\ref{lem:window-computation}, we already know that
\[
x_i=i\cdot x_1
\qquad\text{for all } i\in \{-5,-4,\dots,5\}.
\]

We first prove by induction that
\[
x_j=j\cdot x_1
\qquad\text{for } 0\le j\le m-1.
\]

As before, the statement is already known for $j=0,1,2,3,4,5$. Suppose inductively that
$x_j=j\cdot x_1$
for some integer $j$ with
$5\le j\le m-2$.
Again, we consider the dependent triples
\[
(1,j,-j-1)
\qquad\text{and}\qquad
(0,j+1,-j-1).
\]
Since $j+1\le m-1$, these triples consist of distinct indices in $\Z/n\Z$. Since $\lambda=0$, we have
\[
x_1+x_j+x_{-j-1}=0
\qquad\text{and}\qquad
x_{j+1}+x_{-j-1}=0.
\]
Subtracting gives
\[
x_{j+1}=x_j+x_1=(j+1)\cdot x_1.
\]

Thus, by induction,
\[
x_j=j\cdot x_1
\qquad\text{for all } 0\le j\le m-1.
\]

For $1\le j\le m-1$, the dependent triple
$(0,j,-j)$
gives
\[
x_j+x_{-j}=0,
\]
hence
\[
x_{-j}=-j\cdot x_1.
\]

So only the half-period point $x_m$ remains. To determine it, use the dependent triples
\[
(m,1,m-1)
\qquad\text{and}\qquad
(m,-1,1-m).
\]
Since $\lambda=0$, these yield
\[
x_m+x_1+x_{m-1}=0
\qquad\text{and}\qquad
x_m+x_{-1}+x_{1-m}=0.
\]
Substituting the known formulas,
\[
x_m+x_1+(m-1)x_1=0,
\qquad
x_m-x_1-(m-1)x_1=0.
\]
Thus
\[
x_m=-m\cdot x_1
\qquad\text{and}\qquad
x_m=m\cdot x_1.
\]
Comparing these equalities gives
$(2m)\cdot x_1=0$, that is,
$n\cdot x_1=0$.
Therefore
\[
x_m=m\cdot x_1=-m\cdot x_1,
\] 
and hence
\[
x_i=i\cdot x_1
\qquad\text{for every } i\in \Z/n\Z.
\]

Finally, as in the odd case, the marked points $P_i$ are pairwise distinct, from which it follows that
the group elements
\[
0,x_1,2\cdot x_1,\dots,(n-1)\cdot x_1
\]
are pairwise distinct, and thus the order of $x_1$ is exactly $n$.
\end{proof}

Combining the odd and even cases, we obtain:

\begin{theorem}\label{thm:realization-standard}
Let $n\ge 12$, let $k$ be a field, and let
\[
(P_i)_{i\in \Z/n\Z}\subset \PP^2(k)
\]
be a realization of $\cT_n$. Then there exists an irreducible cubic $C\subset \PP^2_k$
such that:
\begin{enumerate}[label=\textup{(\roman*)}]
\item every point $P_i$ belongs to $C^\sm(k)$;
\item with the group law on $C^\sm$ having identity $P_0$, one has
\[
P_i=iP_1
\qquad\text{for all } i\in \Z/n\Z;
\]
\item the point $P_1$ has exact order $n$ in $C^\sm(k)$;
\item the embedding satisfies
\[
\mathcal O_C(1)\cong \mathcal O_C(3P_0).
\]
\end{enumerate}
\end{theorem}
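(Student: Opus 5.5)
This theorem is essentially an assembly of results already established, so the plan is to combine them. Take $C$ to be the cubic produced by Theorem~\ref{thm:cubic-irreducible-smooth}: the unique cubic through the window $P_{-4},\dots,P_5$. That theorem says $C$ is irreducible and every $P_i$ lies in $C^\sm(k)$, which is (i). As remarked after that theorem, $C$ is then geometrically integral, so $G=C^\sm$ with identity $P_0$ is a one-dimensional connected algebraic group. Let $x_i\in G(k)$ be the element corresponding to $P_i$, and set $\lambda=[H-3P_0]$ as in Lemma~\ref{lem:line-section-constant}.

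Next I will apply the window computation, Lemma~\ref{lem:window-computation}, to $P_{-5},\dots,P_5$. These points are pairwise distinct by Lemma~\ref{lem:marked-points-distinct}, because $n\ge 12>10$ makes the eleven indices distinct residues. Every triple in the window summing to zero as integers also sums to zero in $\Z/n\Z$. Hence that triple is a non-basis of $\cT_n$, so the corresponding points are collinear. The lemma then gives $\lambda=0$, which is exactly (iv): $\mathcal O_C(1)\cong\mathcal O_C(3P_0)$. It also gives $x_i=i\,x_1$ on the window.

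To extend from the window to all of $\Z/n\Z$ and pin down the order of $x_1$, I split by parity. Proposition~\ref{prop:group-law-odd} handles $n=2m+1\ge 13$ and Proposition~\ref{prop:group-law-even} handles $n=2m\ge 12$. Every $n\ge12$ falls into exactly one case, so together they give $x_i=i\,x_1$ for all $i$ and show that $x_1$ has exact order $n$. Interpreting this in the group $C^\sm$ with identity $P_0$ gives $P_i=iP_1$, which is (ii), and the order statement is (iii).

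There is no real obstacle left, since the substantive work sits in the propagation and the window computation. The one point to check is that the hypotheses of Lemma~\ref{lem:window-computation} actually hold in this setting, namely that all integer zero-sum triples in $[-5,5]$ are genuinely collinear. This follows from the distinctness of the window residues modulo $n$, which needs $n\ge 11$.
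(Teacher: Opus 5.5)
Your proposal is correct and follows essentially the same route as the paper's proof: Theorem~\ref{thm:cubic-irreducible-smooth} for (i), Lemma~\ref{lem:window-computation} for (iv), and Propositions~\ref{prop:group-law-odd} and~\ref{prop:group-law-even} for (ii) and (iii). Your explicit check that the window hypothesis holds (distinct residues in $[-5,5]$ for $n\ge 11$, so integer zero-sum triples are non-bases and hence collinear) is a useful detail that the paper leaves implicit.
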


\begin{proof}
The existence of an irreducible cubic containing all marked points and smooth at each marked point is Theorem~\ref{thm:cubic-irreducible-smooth}. Lemma~\ref{lem:window-computation} gives the line-bundle statement $\mathcal O_C(1)\cong \mathcal O_C(3P_0)$.
Proposition~\ref{prop:group-law-odd} proves the remaining assertions when $n$ is odd, and Proposition~\ref{prop:group-law-even} proves them when $n$ is even.
\end{proof}

\section{Completion of the field-valued correspondence}\label{sec:field-valued-bijection-proof}

We now assemble the preceding results to prove the field-valued classification theorem.

\begin{proof}[Proof of Theorem~\ref{thm:field-valued-bijection}]
Injectivity is Proposition~\ref{prop:field-valued-injection}, so it remains to prove surjectivity.
Let
\[
        (P_i)_{i\in \Z/n\Z}\subset \PP^2(k)
\]
be a realization of $\cT_n$.

First assume $n\ge 12$.  By Theorem~\ref{thm:realization-standard}, there is an
irreducible plane cubic $C\subset \PP^2_k$, smooth at all marked points, such that,
with respect to the group law on $C^\sm$ having identity $P_0$,
\[
        P_i=iP_1\quad\text{for all } i\in \Z/n\Z,
\]
the point $P_1$ has exact order $n$, and
\[
        \mathcal O_C(1)\cong \mathcal O_C(3P_0).
\]
It remains only to check that $C$ is either smooth or nodal.  Since a singular irreducible plane cubic is either nodal or cuspidal, it suffices to show that the cuspidal case is impossible. 
After base change to an algebraic closure, the smooth locus of a cuspidal cubic is isomorphic to the additive group $\G_a$.  
Since $\operatorname{char}(k)\nmid n$, multiplication by $n$ on $\G_a$ is an automorphism, contradicting 
the fact that $P_1$ has exact order $n$.  Therefore
$C$ is either smooth or nodal, and
\[
        (C,P_0,P_1)\in X_1(n)^\circ(k).
\]
By construction, its image under $\beta_k$ is the given realization.

The cases $n=10$ and $n=11$ are handled in
Subsections~\ref{subsec:n10} and~\ref{subsec:n11}, respectively.
\end{proof}

\section{Small levels}\label{sec:small-levels}

\subsection{The case \texorpdfstring{$n=10$}{n=10}}\label{subsec:n10}

\begin{lemma}\label{lem:n10-seven-subset}
Every seven-element subset of $\Z/10\Z$ contains three distinct elements whose sum
is zero.
\end{lemma}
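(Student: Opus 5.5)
We argue by contradiction, using the complement. Let $S\subset\Z/10\Z$ have $|S|=7$, and suppose $S$ contains no zero-sum triple of distinct elements. Then the complement $T$ has exactly $3$ elements, and every zero-sum triple of distinct elements must meet $T$. So it suffices to show that no $3$-element subset $T$ meets all such triples.

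We first count the triples. By the formula in the proof of Lemma~\ref{lem:zero-sum-triple-outside-small-set}, the number of unordered zero-sum triples of distinct elements is
\[
\frac{n^2-3n+2\gcd(n,3)}{6}=\frac{100-30+2}{6}=12 .
\]
Next we count how many triples contain a fixed $s$. The partners form a pair $\{b,-s-b\}$ with $b\ne s$, $b\ne -2s$ and $2b\ne -s$.

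If $s$ is odd, then $2b=-s$ has no solution. The remaining $8$ values of $b$ give exactly $4$ triples, unless $s=-2s$. That would require $3s=0$, which is impossible for odd $s\neq 0$ modulo $10$.

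If $s$ is even and nonzero, then $2b=-s$ has two solutions. Then $10-4=6$ values of $b$ remain, giving $3$ triples.

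For $s=0$, the bad values of $b$ are $b=0$ (which covers both $b=s$ and $b=-2s$) and $b=5$. That leaves $8$ values, giving $4$ triples ($\{0,b,-b\}$ with $b=1,2,3,4$).

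Hence each element lies in at most $4$ triples. Three elements can therefore meet at most $12$ triples, and exactly $12$ only if they are pairwise disjoint as covers. So $T$ must consist of three elements from $\{0,1,3,5,7,9\}$, each lying in $4$ triples, with no triple containing two elements of $T$.

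To finish, we need a triple containing two elements of $T$, or we rule the remaining configurations out directly. The obstacle is avoiding a brute-force check. First, any two distinct elements $s,t$ lie in a common zero-sum triple exactly when $-s-t\notin\{s,t\}$, that is, unless $t=-2s$ or $s=-2t$. A disjoint cover therefore forces every pair in $T$ to satisfy this relation. Starting from any $s\in T$, the relations $t=-2s$ or $s=-2t$ leave very few candidates.
\begin{itemize}
\item For $s=1$, we get $t\in\{8,\,\tfrac{-1}{2}\}$. The equation $2t=-1$ has no solution modulo $10$, so $t=8$, which is even and excluded.
\item For $s=5$, we get $t=0$ (from $-2\cdot 5=0$) or $2t=-5$, which has no solution.
\item For $s=0$, we get $t=0$ or $2t=0$, so $t=5$.
\end{itemize}
Checking the odd elements similarly, the only compatible pair is $\{0,5\}$. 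No third element is compatible with both. This contradiction proves the lemma.
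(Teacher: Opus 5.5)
Your proof is correct, but it takes a different route from the paper's.

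The paper works directly with the seven-element set $T$ and splits into three cases:
\begin{enumerate}
\item If $0\in T$, it applies pigeonhole to the four pairs $\{b,-b\}$.
\item If $0\notin T$ but $5\in T$, it applies pigeonhole to the four pairs whose sum with $5$ is zero.
\item If $0,5\notin T$, it lists four explicit zero-sum triples in $\{1,2,3,4,6,7,8,9\}$, and no element lies in all four of them.
\end{enumerate}

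You instead pass to the three-element complement and treat the problem as a covering question:
\begin{itemize}
\item You count the $12$ triples.
\item You compute that $0$ and the odd residues each lie in $4$ triples, while the nonzero even residues lie in $3$.
\item You conclude that a blocking set would have to consist of three degree-$4$ elements whose sets of triples are pairwise disjoint.
\item You rule this out: $s,t$ share no triple only when $t=-2s$ or $s=-2t$, and among $\{0,1,3,5,7,9\}$ this happens only for $\{0,5\}$.
\end{itemize}

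This is essentially the counting argument the paper uses for $n=11$ (Lemma~\ref{lem:n11-eight-subset}), carried through the equality case. At $n=10$ that equality case is exactly tight, since $3\cdot 4=12$. The paper's case split is shorter and needs no per-element counts. Your version is uniform with the $n=11$ argument and shows why $n=10$ is the borderline case for pure counting.

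One small step deserves a sentence. In the odd case, you claim the eight surviving values of $b$ give exactly $4$ triples. This holds because the involution $b\mapsto -s-b$ swaps the two bad values $s$ and $-2s$ and has no fixed points, since $2b=-s$ is unsolvable. Hence the eight good values split into four pairs.
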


\begin{proof}
Let $T\subset\Z/10\Z$ have cardinality $7$. If $0\in T$, then either $T$ contains
one of
\[
\{1,9\},\ \{2,8\},\ \{3,7\},\ \{4,6\},
\]
or it has at most $1+4+1=6$ elements, counting $0$, at most one element from each
pair, and possibly $5$. In the former case, adjoining $0$ gives the required triple.

If $0\notin T$ but $5\in T$, the same argument with
\[
\{1,4\},\ \{2,3\},\ \{6,9\},\ \{7,8\}
\]
shows that either $T$ contains a zero-sum triple through $5$, or $|T|\le 1+4=5$.

Finally, if $0,5\notin T$, then
\[
T=\{1,2,3,4,6,7,8,9\}\setminus\{e\}
\]
for some $e$. At least one of
\[
\{1,2,7\},\qquad \{1,3,6\},\qquad
\{3,8,9\},\qquad \{4,7,9\}
\]
omits $e$, and each displayed triple has sum zero modulo $10$.
\end{proof}

\begin{theorem}\label{thm:n10-bijective}
Let $k$ be a field with $\operatorname{char}(k)\nmid 10$. Then
\[
\beta_k\colon X_1(10)^\circ(k)\longrightarrow R_{\cT_{10}}(k)
\]
is a bijection.
\end{theorem}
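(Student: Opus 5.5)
Injectivity is already covered by Proposition~\ref{prop:field-valued-injection}, which was stated for all $n\ge 10$. So the plan is to prove surjectivity by running the $n\ge12$ argument again and replacing only the steps that used $n\ge 12$. Let $(P_i)$ be a realization of $\cT_{10}$ over $k$.

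\emph{Step 1: all points lie on a cubic.} For $n=10$ the window $\{-4,\dots,5\}$ already exhausts $\Z/10\Z$. So Corollary~\ref{cor:unique-window-cubic}, which requires only $n\ge 10$, directly gives a unique cubic $C$ through all ten marked points. No propagation is needed.

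\emph{Step 2: irreducibility and smoothness.} I would redo Proposition~\ref{prop:irreducible} with Lemma~\ref{lem:n10-seven-subset} in place of Lemma~\ref{lem:zero-sum-triple-outside-small-set}.
\begin{enumerate}[label=\textup{(\alph*)}]
\item A union of three lines contains at most $9<10$ marked points, by Lemma~\ref{lem:no-four-collinear}.
\item If $C=Q\cup L$ with $Q$ an irreducible conic, then $L$ contains at most $3$ marked points. Hence at least $7$ marked points lie on $Q$ and off $L$.
\item By Lemma~\ref{lem:n10-seven-subset}, those seven indices contain a zero-sum triple of distinct elements. The corresponding three collinear points then lie on $Q$, contradicting B\'ezout.
\end{enumerate}
Smoothness of every marked point follows verbatim from Lemma~\ref{lem:through-each-point-a-triple} and Proposition~\ref{prop:smooth-points}, since these need only $n\ge5$. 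As in the general case, $C$ is then geometrically integral, and $G=C^\sm$ carries the group law with identity $P_0$.

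\emph{Step 3: recovering the group law (the main obstacle).} Lemma~\ref{lem:window-computation} cannot be quoted as stated, because $-5=5$ in $\Z/10\Z$, so the eleven window points are not distinct. I would instead reuse its argument up to, but not including, the final step involving $(0,5,-5)$. The triples used there are
\[
(0,\pm1),\ (0,\pm2),\ (-1,-2,3),\ (0,\pm3),\ (1,2,-3),\ (-1,-3,4),\ (0,\pm4),\ (-1,-4,5),\ (-2,-3,5).
\]
Each consists of three distinct residues mod $10$ summing to $0$, so each is a collinear triple by Lemma~\ref{lem:line-section-constant}. The same algebra then yields $\lambda=0$, hence $\cO_C(1)\cong\cO_C(3P_0)$. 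It also yields $x_i=i\,x_1$ for $i\in\{-4,\dots,5\}$, which covers all of $\Z/10\Z$.

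For torsion, use the dependent triple $(2,3,5)$: it gives $x_2+x_3+x_5=10\,x_1=0$. Distinctness of the $P_i$ (Lemma~\ref{lem:marked-points-distinct}) then shows that $x_1$ has exact order $10$. The main care here is checking that none of the triples used degenerates modulo $10$; that check is the list above.

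\emph{Step 4: conclusion.} Exclude the cuspidal case exactly as in the $n\ge12$ proof: $\operatorname{char}(k)\nmid 10$, and $\G_a$ has no nonzero $10$-torsion. Therefore $(C,P_0,P_1)\in X_1(10)^\circ(k)$, and its image under $\beta_k$ is the given realization.
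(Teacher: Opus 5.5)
Your proposal is correct and follows essentially the same route as the paper. The window cubic already contains all ten points, irreducibility comes from the seven-subset lemma, and the truncated window computation (stopping before $(0,5,-5)$) gives $\lambda=0$ and $x_i=i\,x_1$. The only difference is that you use the triple $(2,3,5)$ instead of the paper's $(5,4,1)$ to obtain $10\,x_1=0$, which works equally well.
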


\begin{proof}
Injectivity is Proposition~\ref{prop:field-valued-injection}, so it remains to prove
surjectivity. Let
\[
(P_i)_{i\in\Z/10\Z}\subset\PP^2(k)
\]
be a realization of $\cT_{10}$. By Corollary~\ref{cor:unique-window-cubic}, there is a
unique cubic $C\subset\PP^2_k$ through
\[
P_{-4},P_{-3},\ldots,P_5.
\]
These are all ten marked points.

We first prove that $C$ is irreducible. It cannot be supported on three lines, since
Lemma~\ref{lem:no-four-collinear} allows at most three marked points on each line.
If $C=Q\cup L$ with $Q$ an irreducible conic and $L$ a line, then $Q$ contains at least seven marked points.  By Lemma~\ref{lem:n10-seven-subset}, three of their indices are distinct and sum to zero, so the corresponding three points are collinear.  This is impossible on the irreducible conic $Q$ by B\'ezout's theorem.  Thus $C$ is irreducible. The proof of
Proposition~\ref{prop:smooth-points} now applies verbatim and shows that every $P_i$
is smooth on $C$.

As in the body of the paper, the existence of a smooth $k$-point implies that $C$ is geometrically
irreducible, so its smooth locus carries the group law with identity $P_0$. Write
$x_i\in C^\sm(k)$ for the element corresponding to $P_i$ and let
$H$ be the divisor cut out by a line. The calculation in the proof of
Lemma~\ref{lem:window-computation}, up to but excluding its final use of
$(0,5,-5)$, involves only the ten distinct indices $-4,-3,\ldots,5$, and every
triple used in that part of the calculation remains a triple of distinct residue classes
modulo $10$. It therefore applies and gives
\[
[H-3P_0]=0,
\qquad
x_i=i\cdot x_1 \quad(-4\le i\le 5).
\]
These indices represent every class modulo $10$. The dependent triple $(5,4,1)$
then gives
\[
10\cdot x_1=x_5+x_4+x_1=0.
\]
Since the ten marked points are distinct, the elements
$0,x_1,2x_1,\ldots,9x_1$ are distinct; hence $x_1$ has exact order $10$. Moreover,
$[H-3P_0]=0$ says that
\[
\mathcal O_C(1)\cong\mathcal O_C(3P_0).
\]

It remains only to exclude a cusp. The cuspidal-exclusion argument in
Section~\ref{sec:field-valued-bijection-proof} applies unchanged: after base change to an
algebraic closure, the smooth locus of a cuspidal cubic is $\G_a$, which has no
nonzero $10$-torsion because $\operatorname{char}(k)\nmid10$. Thus $C$ is smooth or
nodal, and $(C,P_0,P_1)$ is a preimage of the given realization under $\beta_k$.
\end{proof}

\subsection{The case \texorpdfstring{$n=11$}{n=11}}\label{subsec:n11}

\begin{lemma}\label{lem:n11-eight-subset}
Every eight-element subset of $\Z/11\Z$ contains three distinct elements whose sum
is zero.
\end{lemma}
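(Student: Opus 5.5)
The plan is a counting argument in the style of Lemma~\ref{lem:zero-sum-triple-outside-small-set}. If $T\subset\Z/11\Z$ has eight elements, its complement $S$ has three elements. It then suffices to show that some unordered zero-sum triple of distinct elements avoids $S$. For $n=12$ the crude bound $n/2$ per element of $S$ was enough; for $n=11$ it is not, since $3\cdot 11/2>15$. So the key step is a sharper count of the triples through each element.

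\emph{Total count.} The count in the proof of Lemma~\ref{lem:zero-sum-triple-outside-small-set} gives
\[
\frac{n^2-3n+2\gcd(n,3)}{6}=\frac{121-33+2}{6}=15
\]
unordered zero-sum triples of distinct elements in $\Z/11\Z$.

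\emph{Triples through a fixed element $s$.} Such a triple is $\{s,b,-s-b\}$, and we need $b\ne s$, $b\ne -2s$ and $2b\ne -s$. The value $b=s$ pairs with $-s-b=-2s$, so these two exclusions form a single unordered pair. Since $11$ is odd, $2b=-s$ has exactly one solution.

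\begin{itemize}
\item For $s\neq 0$ the values $s$, $-2s$ and $-s/2$ are three distinct excluded residues. The remaining $8$ values of $b$ form $4$ unordered pairs, so $s$ lies on exactly $4$ triples.
\item For $s=0$ all three conditions collapse to $b=0$. The remaining $10$ values give $5$ triples $\{0,b,-b\}$.
\end{itemize}

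\emph{Conclusion.} The triples meeting $S$ number at most $5+4+4=13<15$. Hence some zero-sum triple of distinct elements lies entirely in $T$, which proves the lemma.

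The only delicate point is the per-element count. One has to check that the excluded values of $b$ are distinct exactly when $s\ne0$ (this uses $3\nmid 11$ and $11$ odd), and handle $s=0$ separately. The rest is arithmetic.
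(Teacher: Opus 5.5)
Your proof is correct and uses essentially the same counting argument as the paper: there are $15$ zero-sum triples, five pass through $0$ and four through each nonzero element, so a three-element complement cannot meet all of them. The paper differs only in detail. It obtains the count $4$ from the scaling symmetry of $\Z/11\Z$, and it splits into cases according to whether $0\in S$, using the shared triple $\{0,s,-s\}$ to get the bounds $12$ and $11$. Your direct count and single union bound $5+4+4=13<15$ already suffice.
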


\begin{proof}
There are
\[
\frac{11^2-3\cdot11+2}{6}=15
\]
unordered zero-sum triples of distinct elements of $\Z/11\Z$. The element $0$
occurs in five of them, and every nonzero element occurs in four (by scaling, since
$\Z/11\Z$ is a field). Let $S$ be the
three-element complement of an eight-element set $T$. If $0\notin S$, then $S$ meets
at most $3\cdot4=12$ of the fifteen triples. If $0\in S$, then it meets five, and each
of the other two elements of $S$ meets at most three additional triples, for a total
of at most $11$. In either case, some zero-sum triple is disjoint from $S$ and hence
contained in $T$.
\end{proof}

\begin{theorem}\label{thm:n11-bijective}
Let $k$ be a field with $\operatorname{char}(k)\nmid 11$. Then
\[
\beta_k\colon X_1(11)^\circ(k)\longrightarrow R_{\cT_{11}}(k)
\]
is a bijection.
\end{theorem}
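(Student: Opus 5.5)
Injectivity is Proposition~\ref{prop:field-valued-injection}, so the plan is to prove surjectivity by running the $n\ge12$ argument. Since $n=11$ is small, a single Chasles step replaces the whole propagation phase. Let $(P_i)_{i\in\Z/11\Z}$ be a realization, and identify $\Z/11\Z$ with the representatives $[-5,5]$. By Corollary~\ref{cor:unique-window-cubic} there is a unique cubic $C$ through $P_{-4},\dots,P_5$, so only the eleventh point $P_{-5}$ is missing. Consider the grid $\Gamma(-5,0,1,3)$ from Lemma~\ref{lem:odd-first-bootstrap}. Its nine entries are distinct integers in $[-5,5]$, an interval of length $10<11$, so they remain distinct modulo $11$. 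Eight of them lie in the window $[-4,5]$, so Lemma~\ref{lem:grid-completion} forces $P_{-5}\in C$. Thus $C$ contains all eleven marked points.

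Next I show that $C$ is irreducible and that every marked point is smooth on it, following the $n=10$ case. A union of three lines carries at most $9<11$ marked points, by Lemma~\ref{lem:no-four-collinear}. Suppose instead $C=Q\cup L$ with $Q$ an irreducible conic and $L$ a line. The line $L$ carries at most three marked points, so $Q$ carries at least eight. By Lemma~\ref{lem:n11-eight-subset}, three of these have indices that are distinct and sum to zero, so they are collinear. A line cannot meet the irreducible conic $Q$ in three points, by B\'ezout, so this is a contradiction. Smoothness at each $P_i$ then follows verbatim from Proposition~\ref{prop:smooth-points}. As before, a smooth rational point makes $C$ geometrically integral, so $C^\sm$ carries the group law with identity $P_0$.

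Now recover the group law. Lemma~\ref{lem:window-computation} applies directly: the points $P_{-5},\dots,P_5$ are pairwise distinct. Every triple of distinct integers in $[-5,5]$ summing to zero is also a zero-sum triple of distinct residues modulo $11$, hence collinear. The lemma therefore gives $\mathcal O_C(1)\cong\mathcal O_C(3P_0)$ and $x_i=i\cdot x_1$ for all $i\in[-5,5]$, which covers every residue class. For the torsion I use the triple $(5,4,2)$: its entries are distinct and sum to $11\equiv0$, so $x_5+x_4+x_2=0$, which gives $11\cdot x_1=0$. The points $P_i$ are distinct, so the elements $0,x_1,\dots,10\,x_1$ are distinct and $x_1$ has exact order $11$. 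Finally, the cuspidal case is excluded exactly as in Section~\ref{sec:field-valued-bijection-proof}, because $\G_a$ has no nonzero $11$-torsion when $\operatorname{char}(k)\ne11$. Hence $(C,P_0,P_1)\in X_1(11)^\circ(k)$ maps under $\beta_k$ to the given realization.

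The only genuinely new point is the single Chasles step, namely checking that the bootstrap grid is still valid modulo $11$; everything else transfers directly from the earlier sections.
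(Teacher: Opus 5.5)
Your proposal is correct and follows the paper's proof essentially step for step. The steps match: the single bootstrap grid $\Gamma(-5,0,1,3)$ places $P_{-5}$ on the window cubic, Lemma~\ref{lem:n11-eight-subset} with B\'ezout gives irreducibility, Proposition~\ref{prop:smooth-points} gives smoothness, Lemma~\ref{lem:window-computation} with the triple $(5,4,2)$ recovers the group law and the exact order $11$, and the $\G_a$ argument excludes the cuspidal case.
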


\begin{proof}
Again, injectivity is Proposition~\ref{prop:field-valued-injection}. Let
\[
(P_i)_{i\in\Z/11\Z}\subset\PP^2(k)
\]
be a realization of $\cT_{11}$. Corollary~\ref{cor:unique-window-cubic} gives a
unique cubic $C$ through $P_{-4},\ldots,P_5$. The grid
\[
\Gamma(-5,0,1,3)=
\begin{pmatrix}
-5&0&5\\
1&3&-4\\
4&-3&-1
\end{pmatrix}
\]
has nine distinct entries modulo $11$, eight of which lie in this window. By
Lemma~\ref{lem:grid-completion}, $P_{-5}\in C$. Hence $C$ contains all eleven marked
points.

As before, $C$ cannot be supported on three lines. If $C=Q\cup L$ with $Q$ an
irreducible conic, then $Q$ contains at least eight marked points.  By Lemma~\ref{lem:n11-eight-subset}, three of their indices are distinct and sum to zero, so the corresponding collinear triple lies on $Q$, contradicting B\'ezout's theorem.  Hence $C$ is irreducible, and
Proposition~\ref{prop:smooth-points} shows that all marked points lie in $C^\sm(k)$.

Equip $C^\sm$ with identity $P_0$ and write $x_i$ for the element corresponding to
$P_i$. Since the eleven indices $-5,-4,\ldots,5$ are pairwise
distinct modulo $11$, Lemma~\ref{lem:window-computation} gives
\[
\mathcal O_C(1)\cong\mathcal O_C(3P_0),
\qquad
x_i=i\cdot x_1 \quad(-5\le i\le5).
\]
These are all residue classes modulo $11$. The dependent triple $(5,4,2)$ gives
$11x_1=0$, and the distinctness of the eleven marked points shows that $x_1$ has exact
order $11$.

Finally, the cuspidal case is excluded exactly as in
Section~\ref{sec:field-valued-bijection-proof}, since multiplication by $11$ is invertible
on $\G_a$. Thus $C$ is smooth or nodal, and $(C,P_0,P_1)$ maps to the given
realization.
\end{proof}

\subsection{The range \texorpdfstring{$n\le9$}{n<=9}}
\label{sec:small-n}

The situation for smaller $n$ was
analyzed over $\C$ by Borisov and Roulleau \cite[\S3]{BR}, whose treatment we
briefly summarize; we refer the reader to their paper for details and for explicit
equations.

For $n\le3$, the matroid $\cT_n$ is not well-defined, so we assume $n \geq 4$.

For $4 \leq n\le6$, the $k$-realization space of $\cT_n$ is a single point for every field $k$,
so the map $\beta$ is constant, whereas $X_1(n)^\circ$ is a curve.
This can be verified by explicit computation.
For example, when $n=5$, the four elements $1,2,3,4$ are in general position, and once the
corresponding points are placed in the standard frame
$(1:0:0),(0:1:0),(0:0:1),(1:1:1)$ the non-bases $\{0,1,4\}$ and $\{0,2,3\}$
force $P_0=(0:1:1)$. 

For $7\le n\le9$, the realization space of $\cT_n$ is a rational curve, and
\cite[Theorem~1]{BR} asserts that $\beta$ is birational for every 
algebraically closed field $k$ of characteristic not dividing $n$.  In fact,
$\beta$ is an isomorphism in this range: a direct computation shows
that $\cR_{\cT_n}$ is the complement of $3$ (resp. $4$, $5$) points in
$\PP^1$ for $n=7$ (resp. $n=8,9$), which is exactly the number of cusps of $X_1(n)$
carrying a reducible N\'eron polygon, so that $X_1(n)^\circ$ and
$\cR_{\cT_n}$ have the same number of punctures. It follows easily that 
$\beta$ is in fact an isomorphism for every field $k$ of characteristic not dividing $n$.
What fails for $7\le n\le9$ is therefore not the conclusion but our method.

The reason for the hypothesis $n \geq 10$ in our argument is the following.  
The reconstruction of \Cref{sec:seed} requires ten marked points: nine of them form the base
locus of a pencil of cubics, and the tenth selects a particular member of that pencil
(\Cref{cor:unique-window-cubic}).  For $n\le9$, there is no tenth point.  The
case $n=9$ is the extreme one: the whole ground set $\Z/9\Z$ is a single
zero-sum grid in the sense of \Cref{def:gamma-grid}, namely
\[
        \Gamma(0,1,2,3)=
        \begin{pmatrix} 0&1&8\\ 2&3&4\\ 7&5&6\end{pmatrix},
\]
so the nine marked points of any realization of $\cT_9$ are precisely the
complete intersection of the triangle of row-lines with the triangle of
column-lines.  By Chasles' theorem
(\Cref{thm:CBC}), they impose only eight conditions on cubics, and the
cubics through them form a pencil with no distinguished member.  A
realization of $\cT_n$ with $n\le9$ therefore need not determine a particular plane
cubic, and there are no extra points to propagate.

Borisov and Roulleau also observe that a birationality assertion for $n=5,6$ can be recovered by
enlarging the matroid: they construct auxiliary matroids,
obtained from the tangent lines and secant lines of the configuration, whose
realization spaces are birational to $X_1(5)^\circ$ and $X_1(6)^\circ$
\cite[Remarks 3.1 and 3.2]{BR}.  They further suggest that embedding $E$ by
$|kO|$ for $k\ge4$ should realize $X_1(n)$ as the realization space of a
matroid of rank $k$; we have not pursued this direction.

\subsection{An abstract form of the group-law calculation}

The calculation used above will also be needed later over more general local Artinian rings.  
We therefore record its purely group-theoretic content.

\begin{lemma}[Abstract group-law reconstruction]\label{lem:abstract-group-law}
Let $n\ge 10$, let $G$ be an abelian group, and let $x_i\in G$ be indexed by
$i\in\Z/n\Z$.  Suppose that $x_0=0$ and that there is an element $\lambda\in G$
such that
\[
        x_a+x_b+x_c=\lambda
\]
for every triple of distinct indices $a,b,c\in\Z/n\Z$ satisfying
$a+b+c=0$.  Then
\[
        \lambda=0,\qquad x_i=i\cdot x_1\quad(i\in\Z/n\Z),
        \qquad n\cdot x_1=0.
\]
\end{lemma}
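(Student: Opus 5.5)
The proof extracts the purely group-theoretic content of Lemma~\ref{lem:window-computation}, Propositions~\ref{prop:group-law-odd} and~\ref{prop:group-law-even}, and the $n=10,11$ arguments. Those proofs used only the relations $x_a+x_b+x_c=\lambda$ for zero-sum triples of distinct indices and the convention $x_0=0$, so the same computations run verbatim in an arbitrary abelian group $G$. The one point requiring care is that every triple invoked must consist of three \emph{distinct} residues modulo $n$, and this must now be checked uniformly for all $n\ge10$.

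\textbf{Step 1: the window.} I would use the triples $(0,1,-1)$, $(0,2,-2)$, $(-1,-2,3)$, $(0,3,-3)$, $(1,2,-3)$, exactly as in Lemma~\ref{lem:window-computation}. They yield $x_{-1}=\lambda-x_1$, $x_{-2}=\lambda-x_2$, $x_3=x_1+x_2-\lambda$, $x_{-3}=2\lambda-x_1-x_2$, and finally $\lambda=0$. Next the triples $(-1,-3,4)$, $(0,4,-4)$, $(-1,-4,5)$, $(-2,-3,5)$ give $x_2=2x_1$ and hence $x_i=ix_1$ for $-4\le i\le 5$. All indices involved lie in $[-4,5]$, an interval of ten consecutive integers, so they are pairwise distinct modulo $n$ whenever $n\ge10$. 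Thus every triple used is a genuine triple of distinct residues.

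\textbf{Step 2: propagation by induction.} Suppose $x_j=jx_1$ for some $j\ge 4$. The triples $(1,j,-j-1)$ and $(0,j+1,-j-1)$, once subtracted, give $x_{j+1}=x_j+x_1=(j+1)x_1$. These triples are distinct modulo $n$ provided $j+1\not\equiv -j-1$, $j\not\equiv -j-1$, and $j,\ j+1,\ -j-1\not\equiv 0,1$. It is simplest to run the induction for $4\le j$ with $2j+2<n$. The relations $(0,j,-j)$ then give the negatives $x_{-j}=-x_j$.

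This covers the symmetric range $[-m,m]$ when $n=2m+1$, and $[-(m-1),m-1]$ when $n=2m$. For even $n$, the triples $(m,1,m-1)$ and $(m,-1,1-m)$ give $x_m=-mx_1$ and $x_m=mx_1$, exactly as in Proposition~\ref{prop:group-law-even}. For $n=10$ the window $[-4,5]$ already exhausts $\Z/10\Z$, and the formula for $x_5$ there is consistent with this. For $n=11$ the triple $(0,5,-5)$ supplies $x_{-5}=-5x_1$.

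\textbf{Step 3: the torsion relation $nx_1=0$.} For odd $n$, I would use the triple $(m,m-1,2)$. For even $n$, the relation comes out of the two expressions for $x_m$. Alternatively, a single uniform relation works for all $n\ge10$: the triple $(1,4,-5)$ has distinct entries. Reading $-5$ as the residue $n-5$, for which $x_{n-5}=(n-5)x_1$ is already known from Step 2, this triple gives $(1+4+n-5)x_1=nx_1=0$. Once $nx_1=0$, the formula $x_i=ix_1$ is well defined on residues, which completes the proof.

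\textbf{Main obstacle.} The main difficulty is the bookkeeping of distinctness for the small values $n=10,11,12,13$. Here the inductive range can be empty, and the residues $\pm5$ or $m$ may collide with window indices. I would handle these cases by checking each explicitly against the computations already done in Subsections~\ref{subsec:n10} and~\ref{subsec:n11}.
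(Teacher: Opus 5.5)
Your main argument is correct and is essentially the paper's proof. The paper reruns the window computation and the odd/even propagation verbatim for $n\ge12$, and for $n=10,11$ uses the truncated window followed by $(5,4,1)$, resp.\ $(5,4,2)$. Your uniform version (window $[-4,5]$, induction restricted to $2j+2<n$, then $(m,m-1,2)$ or $(m,\pm1,\pm(m-1))$) produces exactly these relations. For $n=10$ your $(5,1,4)$ is the paper's $(5,4,1)$, and for $n=11$ your $(5,4,2)$ is the paper's. So the case-by-case check you anticipate under \emph{Main obstacle} is in fact unnecessary.

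You should, however, drop the ``alternative'' uniform torsion relation coming from $(1,4,-5)$, because it is circular for $n\ge11$. Step~2 establishes $x_{-5}=-5x_1$, not $x_{n-5}=(n-5)x_1$, and passing from one to the other is precisely the statement $n x_1=0$. Substituting what you actually know into that relation gives only the tautology $x_1+4x_1-5x_1=0$. The trick works only for $n=10$, where $-5\equiv5$ and the window gives $x_5=5x_1$ directly.
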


\begin{proof}
For $n\ge12$, the proofs of Lemma~\ref{lem:window-computation} and
Propositions~\ref{prop:group-law-odd} and~\ref{prop:group-law-even} use only
the displayed relations in the abelian group $G$; the geometry enters solely
to produce those relations.  They therefore prove the assertion verbatim.
For $n=10$, the truncated window calculation in the proof of
Theorem~\ref{thm:n10-bijective}, followed by the relation associated to
$(5,4,1)$, gives the result.  For $n=11$, Lemma~\ref{lem:window-computation}
and the relation associated to $(5,4,2)$ give the result.
\end{proof}

\part{The scheme-theoretic correspondence}\label{part:scheme}

\section{The canonical realization scheme of a matroid}
\label{sec:schemes}

In this section, we construct the realization scheme $\cR_M$ of a matroid $M$ as an affine scheme over $\Z$.

Let $E$ be a finite totally ordered set, and let $M$ be a matroid of
rank $r\geq 2$ on $E$.  
Write $\mathcal B(M)$ for its set of bases.  For an
$(r-2)$-subset $I$ and distinct elements $i,j,k,l\notin I$, we use the usual
alternating convention for symbols $p_{Iij}$, and set $p_J=0$ when $J$ is not
a basis of $M$.

Let
\[
P_{r,E}:=\Z[p_I:I\in\tbinom Er]
\]
and let $I_{r,E}\subset P_{r,E}$ be the Pl\"ucker ideal.  
Define a homomorphism
\[
\varphi_M:P_{r,E}\longrightarrow
C_M=\Z[p_B^{\pm1}:B\in\mathcal B(M)]
\]
by
\[
\varphi_M(p_I)=
\begin{cases}
p_I,& I\in\mathcal B(M),\\
0,& I\notin\mathcal B(M).
\end{cases}
\]
Let
\[
J_M:=\varphi_M(I_{r,E})\,C_M.
\]

\begin{definition}
The \emph{Pl\"ucker ring} of $M$ is the ring 
\[
S_M:=C_M/J_M.
\]
\end{definition}

Let $G_M:=\Z\oplus\Z^E$ and grade the Pl\"ucker ring $S_M$ by
\[
        \mdeg(p_B)=\left(1,\sum_{e\in B}e_e\right) \in G_M.
\]

\begin{definition}
\label{def:degree-zero-ring}
Define the \emph{multidegree-zero Pl\"ucker ring} of $M$ by $A_M:=(S_M)_0$,
and define the \emph{realization scheme} of $M$ to be
\[
\cR_M := \Spec A_M.
\]
\end{definition}

Changing the auxiliary total order on $E$ multiplies the alternating
Pl\"ucker coordinates by the corresponding signs and induces a
canonical multigraded isomorphism of the resulting rings.  Thus
$A_M$ and $\cR_M$ are independent of this choice, up to canonical
isomorphism.  

For the construction of $\cR_M$ to be well-behaved, the following observation is important.

\begin{proposition}\label{prop:finite-generation-AM}
The ring $A_M$ is a finitely generated $\Z$-algebra.
\end{proposition}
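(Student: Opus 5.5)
The proof reduces to finite generation of the degree-zero part of a Laurent polynomial ring by a monomial lattice; the quotient by $J_M$ then comes for free. Since $S_M=C_M/J_M$ and $J_M$ is generated by the images of the Plücker relations, which are homogeneous for the multigrading, $J_M$ is a homogeneous ideal of $C_M$. Taking degree-zero parts is exact on graded modules, so $A_M=(S_M)_0=(C_M)_0/(J_M)_0$. It therefore suffices to show that $(C_M)_0$ is a finitely generated $\Z$-algebra.

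To see that each Plücker relation is homogeneous, note that a three-term relation $p_{Iij}p_{Ikl}-p_{Iik}p_{Ijl}+p_{Iil}p_{Ijk}$ has every monomial of multidegree $(2,2e_I+e_i+e_j+e_k+e_l)$. The same bookkeeping handles the general quadratic Plücker relations: each monomial has total degree $2$ and uses each element of $E$ with the same multiplicity. Setting non-bases to $0$ preserves homogeneity.

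The ring $C_M=\Z[p_B^{\pm1}]$ is the group ring $\Z[\Z^{\mathcal B(M)}]$. Its degree-zero part is the group ring $\Z[\Lambda]$, where
\[
\Lambda=\ker\bigl(\Z^{\mathcal B(M)}\to G_M\bigr)
\]
is the kernel of the group homomorphism sending the basis vector of $B$ to $\mdeg(p_B)$. Indeed, a Laurent monomial $\prod p_B^{m_B}$ has degree zero exactly when $(m_B)\in\Lambda$, and the degree-zero part is spanned by such monomials.

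The step I expect to be the main obstacle is passing from $\Z[\Lambda]$ to finite generation, which hinges on $\Lambda$ being a group rather than a monoid. Because we invert all basis variables, $\Lambda$ is a subgroup of the finitely generated free abelian group $\Z^{\mathcal B(M)}$, so it is free of finite rank with a basis $\mu_1,\dots,\mu_s$. Then $\Z[\Lambda]$ is generated as a $\Z$-algebra by the Laurent monomials $p^{\pm\mu_1},\dots,p^{\pm\mu_s}$. This sidesteps Gordan's lemma, which would be needed only for a monoid.

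Therefore $A_M$ is a quotient of the finitely generated $\Z$-algebra $\Z[\Lambda]$, and hence is itself finitely generated. If needed, one can also note that the quotient $(C_M)_0\to A_M$ is surjective precisely because degree zero commutes with quotients by homogeneous ideals.
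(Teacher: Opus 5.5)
Your proof is correct and follows the paper's argument. Both identify $(C_M)_0$ with the group algebra $\Z[\ker\delta_M]$ of a finitely generated free abelian group, hence a finitely generated Laurent polynomial ring, and then use multihomogeneity of $J_M$ to get a surjection $(C_M)_0\to(S_M)_0=A_M$; your explicit check that the Pl\"ucker relations are multihomogeneous fills in a point the paper simply asserts.
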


\begin{proof}
The ring
\[
C_M=\Z[p_B^{\pm1}:B\in\mathcal B(M)]
\]
is the group algebra of the free abelian group
$\Z^{\mathcal B(M)}$.

Let $\varepsilon_B$ denote the standard basis vector corresponding to
$B\in\mathcal B(M)$.  The multigrading on $C_M$ is induced by the
group homomorphism
\[
\delta_M:\Z^{\mathcal B(M)}\longrightarrow G_M=\Z\oplus\Z^E
\]
defined by
\[
\delta_M(\varepsilon_B)
=
\left(1,\sum_{e\in B}e_e\right).
\]

The Laurent monomial
\[
p^a:=\prod_{B\in\mathcal B(M)}p_B^{a_B}
\]
has multidegree $\delta_M(a)$.

Let $L:=\ker(\delta_M).$
Since $L$ is a subgroup of the finitely generated free abelian group
$\Z^{\mathcal B(M)}$, it is itself a finitely generated free abelian
group.  The multidegree-zero part of $C_M$ is the group algebra of
$L$: $(C_M)_0=\Z[L]$.
After choosing a basis of $L$, this is a finitely generated Laurent
polynomial algebra over $\Z$.
Since $J_M$ is multihomogeneous, the quotient map
\[
C_M\longrightarrow S_M=C_M/J_M
\]
induces a surjection
\[
(C_M)_0\longrightarrow(S_M)_0=A_M.
\]
Thus $A_M$ is a quotient of a finitely generated $\Z$-algebra, and is
therefore itself finitely generated over $\Z$.
\end{proof}

\section{The elliptic realization scheme}
\label{sec:canonical-Tn}

We now specialize the preceding construction to the elliptic matroid $\cT_n$.

\begin{definition}\label{def:elliptic-realization-scheme}
For $n\ge10$, define the \emph{realization scheme} of $\cT_n$ to be
\[
        \cR_n:=\cR_{\cT_n}\times_{\Spec\Z}\Spec\Z[1/n]
        =\Spec A_{\cT_n}[1/n].
\]
\end{definition}

\begin{definition}\label{def:unimodular}
Let $A$ be a local ring.  A vector $q=(a,b,c)\in A^3$
is called \emph{unimodular} if its coordinates generate the unit
ideal: $(a,b,c)=A$.

Since $A$ is local, this is equivalent to requiring that at least one
of $a,b,c$ be a unit.
Every section $Q\in\PP^2_A(A)$ can be represented by a unimodular
homogeneous coordinate vector $q\in A^3$, unique up to multiplication
by an element of $A^\times$.
\end{definition}

\begin{definition}\label{def:projective-frame}
An ordered quadruple $(Q_0,Q_1,Q_2,Q_3)$
of sections of $\PP^2_A\to\Spec A$ is a \emph{projective frame} if,
after choosing unimodular homogeneous coordinate vectors
$q_i\in A^3$ representing the $Q_i$, every determinant
\[
\det(q_i,q_j,q_k),
\qquad 0\le i<j<k\le3
\]
is a unit in $A$.
This condition is independent of the chosen lifts and is
equivalent to requiring that no three of the four points in the closed fiber are collinear.  
\end{definition}

The \emph{standard normalized projective frame} is
\[
        (1:0:0),\quad(0:1:0),\quad(0:0:1),\quad(1:1:1).
\]

When a labelled configuration $(P_i)_{i\in E}$ is under discussion, we call
a four-element subset $F=\{i_0,i_1,i_2,i_3\}\subset E$, with a specified
ordering of its elements, a \emph{frame-label set} if the ordered quadruple
$(P_{i_0},P_{i_1},P_{i_2},P_{i_3})$ is a projective frame.  

\subsection{The normalized frame slice}
\label{subsec:normalized-frame-slice}

We now compare the multidegree-zero Pl\"ucker ring $A_{\cT_n}$ with the normalized representation scheme used in the deformation arguments below.

Let
\[
        E=\Z/n\Z,
        \qquad
        F=\{0,1,2,3\}.
\]
We use $F$ as our frame-label set.  Since every three-element subset of $F$
is a basis of $\cT_n$, the ordered quadruple
$(P_0,P_1,P_2,P_3)$ is a projective frame in every realization of
$\cT_n$.  We normalize it to the standard projective frame
\[
        P_0=(1:0:0),\quad
        P_1=(0:1:0),\quad
        P_2=(0:0:1),\quad
        P_3=(1:1:1).
\]
For an ordered triple $(a,b,c)$ of distinct elements of $E$, we write $p_{abc}$ for the corresponding alternating Pl\"ucker coordinate: if $I=\{a,b,c\}$ and $i<j<k$ are the elements of $I$ in increasing order, then
\[
        p_{abc}=\operatorname{sgn}(a,b,c;i,j,k)p_{ijk}.
\]

For $i\in E\setminus F$, at least one of the triples
\[
        \{i,1,2\},\qquad \{0,i,2\},\qquad \{0,1,i\}
\]
is a basis of $\cT_n$, since these triples have sums $i+3$, $i+2$, and $i+1$, respectively.  Choose once and for all one such basis and denote it by $B_i$.  Since $B_i$ is a basis of $\cT_n$, the Pl\"ucker coordinate $p_{B_i}$ is inverted in the Pl\"ucker ring $S_{\cT_n}$ of $\cT_n$.

Let $A_{\cT_n}=(S_{\cT_n})_0$
be the multidegree-zero Pl\"ucker ring of $\cT_n$, and let $W$ be the following set of multihomogeneous units of $S_{\cT_n}$:
\[
        p_{012},\quad p_{013},\quad p_{023},\quad p_{123},
        \quad\text{and}\quad p_{B_i}\;(i\in E\setminus F).
\]

\begin{lemma}
The multidegrees of the elements of $W$ form a basis for the subgroup of $\Z\oplus\Z^E$ generated by the multidegrees of all Pl\"ucker coordinates of $\cT_n$.  
\end{lemma}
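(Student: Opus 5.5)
The plan is to prove both halves of ``basis'' at once: show that the $n$ elements of $W$ generate the subgroup $\Lambda\subset\Z\oplus\Z^E$ spanned by all multidegrees
\[
w_B:=\mdeg(p_B)=\Bigl(1,\textstyle\sum_{e\in B}e_e\Bigr),\qquad B\in\mathcal B(\cT_n),
\]
and then compute that $\Lambda$ has rank $n$. Since $|W|=4+(n-4)=n$, a generating set of $n$ elements of a free abelian group of rank $n$ is automatically a basis. This avoids any determinant calculation.

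\emph{Step 1: $\Lambda_W$ contains all differences.} Let $\Lambda_W\subseteq\Lambda$ be the subgroup generated by the multidegrees of elements of $W$. Differences of frame degrees give, for example,
\[
w_{012}-w_{013}=(0,e_2-e_3).
\]
The four frame triples $\{0,1,2\},\{0,1,3\},\{0,2,3\},\{1,2,3\}$ are all bases of $\cT_n$. Hence $(0,e_a-e_b)\in\Lambda_W$ for all $a,b\in F$.

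Now take $i\in E\setminus F$. The chosen basis $B_i$ is $\{i\}\cup(\{0,1,2\}\setminus\{c\})$ for some $c\in\{0,1,2\}$. Therefore
\[
w_{B_i}-w_{012}=(0,e_i-e_c)\in\Lambda_W.
\]
Combining this with the differences inside $F$, we get $(0,e_a-e_b)\in\Lambda_W$ for all $a,b\in E$.

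\emph{Step 2: generation.} For an arbitrary basis $B=\{a,b,c\}$ we can write
\[
w_B=w_{012}+(0,e_a-e_0)+(0,e_b-e_1)+(0,e_c-e_2).
\]
By Step 1, every term on the right lies in $\Lambda_W$. Hence $\Lambda_W=\Lambda$.

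\emph{Step 3: rank count.} The differences $(0,e_a-e_0)$ for $a\in E\setminus\{0\}$ are $n-1$ independent vectors in $\Lambda$. The element $w_{012}$ has first coordinate $1$, whereas all these differences have first coordinate $0$, so it is independent of them. Thus $\operatorname{rk}\Lambda\ge n$. Since $\Lambda$ is generated by the $n$ elements of $W$, its rank is exactly $n$, and those $n$ generators form a $\Z$-basis.

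The only point needing care is Step 1, namely that each $B_i$ shares exactly two labels with the frame triple $\{0,1,2\}$. This holds by the way $B_i$ was chosen among $\{i,1,2\},\{0,i,2\},\{0,1,i\}$.
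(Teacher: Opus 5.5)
Your proof is correct. Steps 1 and 2 (generation) match the paper's argument: frame differences give all $e_a-e_b$ with $a,b\in F$, each $B_i$ contributes $e_i-e_c$, and every Pl\"ucker degree is $\mdeg(p_{012})$ plus label differences.

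Where you differ is independence. The paper argues directly. For $i\in E\setminus F$, the coordinate $e_i$ occurs in exactly one element of $W$, namely $\mdeg(p_{B_i})$, so those coefficients vanish in any relation. The surviving relation among the four frame degrees is then killed by their $F$-coordinates, which form a nonsingular $4\times4$ system.

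You instead get independence from generation plus a rank count. The $n-1$ differences $(0,e_a-e_0)$ together with $w_{012}$ give $\operatorname{rk}\Lambda\ge n$. Then $n$ generators of a free abelian group of rank $n$ form a basis, because the kernel of $\Z^n\to\Lambda$ is a rank-zero subgroup of $\Z^n$, hence zero.

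Your route reuses Step 1 and avoids the small linear-algebra check on the frame block. The paper's route does not depend on the generation step, and it shows the triangular structure behind the result: each non-frame label is detected by a single element of $W$. Since the fact that $n$ generators of a rank-$n$ free abelian group form a basis carries your whole independence half, state it explicitly.
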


\begin{proof}
Write
\[
        d_{abc}:=\mdeg(p_{abc})=(1,e_a+e_b+e_c).
\]
The four multidegrees associated with the frame-label set, namely $d_{012},d_{013},d_{023},d_{123}$, generate one Pl\"ucker degree and all differences $e_a-e_b$ with $a,b\in F$.  If $B_i=\{i,a,b\}$ with $a,b\in F$, choose any $c\in F\setminus\{a,b\}$. Then
\[
        d_{iab}-d_{cab}=e_i-e_c.
\]
Thus the multidegrees in $W$ generate $e_i-e_j$ for all labels $i,j\in E$, together with one Pl\"ucker degree.  Since every Pl\"ucker degree is obtained from $d_{012}$ by adding such label differences, these multidegrees generate the subgroup in question.  They are also independent: in a relation among the multidegrees in $W$, the coefficient of $e_i$ for $i\in E\setminus F$ is exactly the coefficient of $d_{B_i}$, so all these coefficients vanish; the remaining relation among $d_{012},d_{013},d_{023},d_{123}$ is forced to be trivial by comparing the four coordinates indexed by the frame-label set.  Hence the multidegrees in $W$ form a basis.
\end{proof}

Let $N_n$ be the affine scheme over $\Z$ obtained as follows.
 For each $i\in E\setminus F$, write $P_i=(X_i:Y_i:Z_i)$ and impose the affine chart condition corresponding to $B_i$:
\[
\begin{array}{ll}
X_i=1, & \text{if } B_i=\{i,1,2\},\\
Y_i=1, & \text{if } B_i=\{0,i,2\},\\
Z_i=1, & \text{if } B_i=\{0,1,i\}.
\end{array}
\]
Together with the standard normalized projective frame $(P_0,P_1,P_2,P_3)$, these coordinates give a $3\times n$ matrix.  The coordinate ring of $N_n$ is the polynomial ring in the remaining affine coordinates modulo the equations saying that all nonbasis $3\times3$ determinants vanish, localized at all basis determinants.  Thus $N_n$ represents normalized labelled configurations with support $\cT_n$ in this fixed normalized projective-frame chart.

\begin{remark}
The way we have defined $N_n$ is the approach taken in \cite{BR} to \emph{defining} the realization scheme of $\cT_n$. We have chosen to give a more canonical construction of 
the realization scheme and to interpret $N_n$ as a convenient presentation, rather than as a definition.
\end{remark}

\begin{proposition}
\label{prop:normalized-frame-slice}
For the fixed choice of the basis triples $B_i$ above, there is a natural isomorphism
\[
        A_{\cT_n}\;\xrightarrow{\sim}\;\Gamma(N_n,\mathcal O_{N_n}).
\]
\end{proposition}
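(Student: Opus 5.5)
We will construct mutually inverse ring homomorphisms between $A_{\cT_n}$ and $\Gamma(N_n,\cO_{N_n})$. In one direction we use the normalized matrix. On $N_n$ we have the universal $3\times n$ matrix $V$: its columns $0,1,2,3$ are the standard frame, and its other columns $(X_i,Y_i,Z_i)$ carry the chart normalization. We send $p_{abc}\mapsto \det(V_a,V_b,V_c)$. This map kills the nonbasis coordinates by the defining equations of $N_n$, it sends basis coordinates to units by the localization, and the Pl\"ucker relations hold automatically for maximal minors. So we obtain a homomorphism $S_{\cT_n}\to\Gamma(N_n,\cO)$, and we restrict it to the degree-zero part $A_{\cT_n}$.

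In the other direction we dehomogenize using $W$. By the preceding lemma, the multidegrees of $W$ form a basis of the degree lattice spanned by the $p_I$. Hence for every basis $B$ there is a unique Laurent monomial $w_B$ in $W$ with $\mdeg(w_B)=\mdeg(p_B)$, and $p_B/w_B\in A_{\cT_n}$. Moreover $A_{\cT_n}$ is generated by these ratios together with their inverses. We then write each affine coordinate of $N_n$ explicitly as such a ratio by Cramer's rule. For example, if $B_i=\{i,1,2\}$ then $X_i=1$, and we take
\[
Y_i=\frac{p_{0i2}\,p_{123}}{p_{i12}\,p_{023}}\cdot(\text{sign}),
\]
with analogous formulas in the other charts and for $Z_i$. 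The normalizations by $p_{123}$ and the like account for the column scalings forced by $P_3=(1:1:1)$.

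To check that these formulas define a ring map $\Gamma(N_n,\cO)\to A_{\cT_n}$, we must show that every relation of $N_n$ maps to zero. The cleanest way to organize this, and the point where we expect the real work to lie, is a gauge-fixing argument. We consider the scheme $\widetilde N$ of full-rank $3\times n$ matrices whose nonbasis minors vanish and whose basis minors are units. The torus $T=\GL_3\times\G_m^E$ acts on $\widetilde N$, and this action is free. The slice $N_n\hookrightarrow \widetilde N$ meets every orbit exactly once, scheme-theoretically, so we get $\widetilde N\cong (\GL_3\times\G_m^E)/\mu\times N_n$, where $\mu$ is the central $\G_m$ acting trivially. Concretely, $g$ and the scalars are recovered from the frame columns and the $B_i$ coordinates as units.

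It then remains to compare invariants. We identify $\Gamma(\widetilde N,\cO)^{T}$ with $\Gamma(N_n,\cO)$. On the Pl\"ucker side, $S_{\cT_n}$ should be the ring of $\SL_3$-invariants of $\widetilde N$, by the first fundamental theorem together with the Pl\"ucker relations, or more simply via the same slice: $S_{\cT_n}\cong\Gamma(N_n,\cO)[W^{\pm1}]$ as a Laurent extension. Taking multidegree-zero parts then gives $A_{\cT_n}\cong\Gamma(N_n,\cO)$.

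We expect the main obstacle to be proving $S_{\cT_n}\cong\Gamma(N_n,\cO)[w^{\pm1}:w\in W]$ over $\Z$, that is, showing that the Pl\"ucker ideal $J_M$ is exactly what is needed with no extra torsion. We would attack this directly. In $S_{\cT_n}$, the three-term Pl\"ucker relations let us express every $p_I$ as a polynomial in the entries of a matrix reconstructed from $W$ and the ratios above. This is the standard fact that in the big cell $p_{012}\neq 0$ the Pl\"ucker coordinates are the minors of $[\,I_3\mid Y\,]$, which holds over $\Z$. The composites in both directions are then the identity on generators. Naturality follows because every choice made was forced once the $B_i$ were fixed.
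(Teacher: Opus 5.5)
Your proposal is correct and follows essentially the same route as the paper: you dehomogenize using the fact that the multidegrees of $W$ form a lattice basis (the content of Lemma~\ref{lem:graded-ring}), send Pl\"ucker coordinates to minors of the normalized matrix, and recover the affine coordinates of $N_n$ as degree-zero Pl\"ucker ratios. You are more explicit than the paper on the one substantive point, namely that in $S_{\cT_n}$, with $p_{012}$ and the basis coordinates invertible, the Pl\"ucker relations force every $p_I$ to equal the corresponding minor of the reconstructed matrix up to a unit built from $W$; this is what makes the coordinate-recovery map well defined, and the gauge-fixing detour is not needed.
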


Before giving the proof, we need the following ring-theoretic lemma.

\begin{lemma} 
\label{lem:graded-ring}
Let $S$ be a ring graded by a free abelian group $\Lambda$, and suppose that $u_1,\ldots,u_m$ are homogeneous units whose degrees form a basis for the subgroup of $\Lambda$ generated by the degrees occurring in $S$.  Then multiplication by monomials in the $u_j$ gives an isomorphism
\[
        S_0[u_1^{\pm1},\ldots,u_m^{\pm1}]\;\xrightarrow{\sim}\; S.
\]
\end{lemma}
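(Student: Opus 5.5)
Let $\Lambda' \subset \Lambda$ denote the subgroup generated by the degrees occurring in $S$, i.e.\ by those $\lambda$ with $S_\lambda\neq 0$. By hypothesis $\deg u_1,\ldots,\deg u_m$ form a basis of $\Lambda'$. Then every $\lambda\in\Lambda'$ can be written uniquely as $\lambda=\sum_j a_j\deg u_j$ with $a=(a_1,\ldots,a_m)\in\Z^m$. Write $u^a:=\prod_j u_j^{a_j}$; this is a homogeneous unit of degree $\lambda$. The plan is to show directly that the map
\[
\Phi: S_0[t_1^{\pm1},\ldots,t_m^{\pm1}]\longrightarrow S,\qquad s\,t^a\longmapsto s\,u^a,
\]
is a ring isomorphism. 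Here the source is a Laurent polynomial ring in formal variables $t_j$, graded by $\deg t_j=\deg u_j$.

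The map $\Phi$ is a well-defined ring homomorphism because the $u_j$ are units in the commutative ring $S$, so the universal property of Laurent polynomial rings applies. It is also graded for this grading. Hence it suffices to check bijectivity degree by degree.

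For $\lambda\notin\Lambda'$, both sides vanish in degree $\lambda$: the target by definition of $\Lambda'$, and the source because $\deg t^a$ always lies in $\Lambda'$. For $\lambda\in\Lambda'$, the degree-$\lambda$ part of the source is $S_0\cdot t^a$ for the unique $a$ with $\sum_j a_j\deg u_j=\lambda$; uniqueness comes from the linear independence of the $\deg u_j$. On this piece $\Phi$ is the map $S_0\to S_\lambda$ given by $s\mapsto s u^a$. Multiplication by $u^{-a}$ maps $S_\lambda$ into $S_0$ and is a two-sided inverse, so the map is bijective.

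Combining the pieces, $\Phi$ is bijective on each graded component and hence an isomorphism. The only point needing care is the uniqueness of $a$: the $\deg u_j$ must be a basis and not merely a generating set, otherwise the source would have several monomials in the same degree and $\Phi$ would fail to be injective. This is exactly what the basis hypothesis supplies. I expect no real obstacle beyond this bookkeeping.
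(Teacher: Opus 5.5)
Your proof is correct and follows the same route as the paper: both use the uniqueness of the exponent vector $a$ with $\lambda=\sum_j a_j\deg u_j$ and invert the map on each homogeneous component via $f\mapsto f\,u^{-a}$. Your version additionally spells out details the paper leaves implicit, namely the formal Laurent ring, the fact that $\Phi$ is graded, and the vanishing of both sides in degrees outside $\Lambda'$.
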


\begin{proof}
If $f\in S$ is homogeneous of degree $\lambda$, there is a unique integer vector $a=(a_1,\ldots,a_m)$ such that $\lambda=\sum a_j\deg(u_j)$; then $f u^{-a}\in S_0$.  This gives the inverse map on homogeneous elements and hence on $S$.  In particular, the quotient of $S$ by the relations $u_j=c_j$, where each $c_j$ is a prescribed unit, is canonically identified with $S_0$ after rescaling by the constants $c_j$.
\end{proof}

\begin{proof}[Proof of Proposition~\ref{prop:normalized-frame-slice}]
Apply Lemma~\ref{lem:graded-ring} to $S=S_{\cT_n}$ and to the set $W$ above.  It follows that the ring $A_{\cT_n}$ is obtained from $S_{\cT_n}$ by pinning the units in $W$ to their normalized values.  These normalizations are exactly the equations fixing the ordered projective frame to the standard normalized frame
\[
        P_0=(1:0:0),\quad P_1=(0:1:0),\quad P_2=(0:0:1),\quad P_3=(1:1:1)
\]
and the affine chart equations $X_i=1$, $Y_i=1$, or $Z_i=1$ chosen above for each $i\in E\setminus F$.

We now identify the resulting quotient with the coordinate ring of $N_n$.  On the normalized chart, every Pl\"ucker coordinate $p_I$ is sent to the corresponding $3\times3$ determinant $\Delta_I$ of the normalized matrix.  The ordinary Pl\"ucker relations hold because the $\Delta_I$ are actual minors of a matrix.  The equations $p_I=0$ for nonbases become exactly the determinant-vanishing equations defining the support $\cT_n$, and the inversion of the basis Pl\"ucker coordinates becomes exactly the localization at all basis determinants.

Conversely, the affine coordinates of the normalized matrix can be recovered from Pl\"ucker coordinates.  With the alternating convention above, the three homogeneous coordinates of $P_i$ relative to the first three points of the standard projective frame are:
\[
        X_i= \pm\frac{p_{i12}}{p_{012}},\qquad
        Y_i= \pm\frac{p_{0i2}}{p_{012}},\qquad
        Z_i= \pm\frac{p_{01i}}{p_{012}},
\]
where the signs depend only on the chosen ordering convention.  After normalizing the chosen coordinate to $1$, the remaining two affine coordinates are the corresponding ratios of the three displayed Pl\"ucker coordinates.  A displayed ratio need not itself have multidegree zero; however, the basis property of the multidegrees of $W$ supplies a unique Laurent monomial in the pinned units which corrects it to multidegree zero.  After the units in $W$ are set equal to their normalized values, the corrected expression agrees with the displayed ratio.  Thus the determinant map and the coordinate-recovery map are inverse to one another, with the signs absorbed by the fixed units $\pm1$.
Therefore $A_{\cT_n}\cong\Gamma(N_n,\mathcal O_{N_n})$.  
\end{proof}

Finally, we note for later reference:

\begin{proposition}
\label{prop:Rn-field-points}
For every field $k$ with $\operatorname{char}(k)\nmid n$, the set $\cR_n(k)$ is naturally identified with the set of rescaling classes of $k$-representations of $\cT_n$.
\end{proposition}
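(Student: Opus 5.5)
The plan is to compute $\cR_n(k)=\Hom(A_{\cT_n}[1/n],k)=\Hom(A_{\cT_n},k)$ using the normalized frame slice of Proposition~\ref{prop:normalized-frame-slice}. For a field $k$ of characteristic prime to $n$, every ring map $A_{\cT_n}\to k$ automatically factors through $A_{\cT_n}[1/n]$. Combined with the isomorphism $A_{\cT_n}\cong\Gamma(N_n,\cO_{N_n})$, this gives $\cR_n(k)=N_n(k)$. The task then reduces to identifying $N_n(k)$ with the set $R_{\cT_n}(k)$ of rescaling classes.

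First I describe $N_n(k)$. A $k$-point is a $3\times n$ matrix satisfying two conditions:
\begin{enumerate}[label=(\alph*)]
\item its first four columns are the standard frame vectors $(1,0,0),(0,1,0),(0,0,1),(1,1,1)$;
\item for each $i\in E\setminus F$, the chosen chart coordinate of column $i$ ($X_i$, $Y_i$ or $Z_i$, according to $B_i$) equals $1$.
\end{enumerate}
Moreover, all non-basis minors vanish and all basis minors are nonzero. These last conditions say exactly that the columns form a $k$-realization of $\cT_n$. In particular, every column is nonzero, since each column lies in some basis.

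This gives a map $N_n(k)\to R_{\cT_n}(k)$. I show it is bijective.

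\textbf{Surjectivity.} Take any realization $(v_i)$. Since every $3$-subset of $F=\{0,1,2,3\}$ is a basis (its sum lies in $\{3,4,5,6\}$, nonzero mod $n$ for $n\ge10$), the points $P_0,\dots,P_3$ form a projective frame. The fundamental theorem of projective geometry then gives $g\in\GL_3(k)$ and scalars $\lambda_0,\dots,\lambda_3$ carrying $v_0,\dots,v_3$ to the standard frame vectors. For $i\notin F$, the minor $p_{B_i}$ of the transformed matrix is nonzero. Taking $B_i=\{i,1,2\}$ as an example, this minor is, up to sign, the coordinate $X_i$, because the other two columns are $e_2,e_3$; the other cases are analogous. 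So the relevant coordinate is a unit, and rescaling column $i$ makes it equal to $1$.

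\textbf{Injectivity.} Suppose two normalized matrices are rescaling equivalent via $(g,\lambda)$. Then $g$ fixes the standard frame projectively, so $g$ is a scalar matrix $c\cdot I$. Hence $\lambda_i c=1$ for $i\in F$, using the columns $e_1,e_2,e_3$ and $(1,1,1)$. For $i\notin F$, the chart normalization forces $\lambda_i c=1$ as well. So the two matrices coincide.

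Naturality in $k$ is clear, and independence of the choice of the $B_i$ follows from the canonicity of $A_{\cT_n}$.

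The only subtle point I expect is checking that the isomorphism of Proposition~\ref{prop:normalized-frame-slice} is compatible with these set-level identifications, i.e.\ that evaluating Plücker coordinates as minors sends $k$-points of $N_n$ to the correct homomorphisms $A_{\cT_n}\to k$. This follows from the explicit description given in its proof. An alternative check is that a homomorphism $S_{\cT_n}\to k$ corresponds to a realization up to $\mathrm{SL}$ (via the Plücker embedding of $\mathrm{Gr}(3,n)$), and its restriction to degree zero kills exactly the torus rescalings.
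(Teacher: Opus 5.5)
Your proposal is correct and takes the same route as the paper: identify $\cR_n(k)$ with $N_n(k)$ through the normalized frame slice of Proposition~\ref{prop:normalized-frame-slice}, then show that every rescaling class has a unique normalized representative. You spell out what the paper leaves terse, namely the factorization through $A_{\cT_n}[1/n]$ (using $\operatorname{char}(k)\nmid n$), the minor computation showing the $B_i$-chart coordinate is a unit, and the uniqueness argument that forces $g$ to be scalar.
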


\begin{proof}
The normalized frame slice of \cref{prop:normalized-frame-slice} identifies $\cR_n$ with the normalized representation scheme.  Every rescaling class of a $k$-representation of $\cT_n$ has a unique representative in which the ordered projective frame $(P_0,P_1,P_2,P_3)$ attached to the frame-label set $F=\{0,1,2,3\}$ is normalized; column rescalings then choose the affine representatives used in the construction of $N_n$.  Under this representative, the functions in $A_{\cT_n}$ are exactly the multidegree-zero Pl\"ucker coordinates.  This gives the asserted identification.
\end{proof}

\section{The modular curve and the modular morphism}
\label{sec:modular-morphism}

\subsection{The open modular curve \texorpdfstring{$X_1(n)^\circ$}{X1(n)-circ} and its cusps}

We use the compactified modular curve of generalized elliptic curves with
$\Gamma_1(n)$-level structure and take as known facts the existence of $X_1(n)$, its universal generalized
elliptic curve, and the representability of the $\Gamma_1(n)$-moduli problem;
see Deligne--Rapoport \cite{DR}, Katz--Mazur \cite[Chs.~3--5, 8]{KM}, and
Conrad \cite{Conrad-KM}.

For $n\ge5$, the moduli problem in question is
represented by a scheme over $\Z[1/n]$.  Let $X_1(n)$ denote the compactified
modular curve, and let $Y_1(n)\subset X_1(n)$ be the usual open modular curve
parametrizing smooth elliptic curves with a $\Gamma_1(n)$-level structure.  We define
\[
        X_1(n)^\circ\subset X_1(n)
\]
to be the open locus on which the geometric fibers of the universal generalized
elliptic curve are irreducible.  Thus the fibers over $X_1(n)^\circ$ are
smooth elliptic curves and N\'eron $1$-gons, and $Y_1(n)\subset X_1(n)^\circ$.  The relative complete linear
system $|3O|$ embeds every such fiber as a smooth or irreducible nodal plane
cubic.  Consequently, the geometric points of $X_1(n)^\circ$ are precisely
the triples $(C,O,P)$ used in Part~\ref{part:field}.  More generally, since
$n\ge5$ and the $\Gamma_1(n)$ moduli problem is representable, for every
field $k$ over $\Z[1/n]$ the set $X_1(n)^\circ(k)$ is naturally the set of
projective isomorphism classes used in Part~\ref{part:field}.

\begin{remark}
\label{rem:why-invert-n}
The restriction to $\Z[1/n]$ in the statement of
\Cref{thm:main} is not imposed merely to simplify the deformation
argument.  At a prime $p\mid n$, a $\Gamma_1(n)$-structure is a
Drinfeld level structure.  On the irreducible locus relevant here, a
section $P$ has exact order $n$ in the Drinfeld sense when the
effective Cartier divisor $\sum_{i\in\Z/n\Z}[iP]$
is a finite flat subgroup scheme of rank $n$ (the usual ampleness
condition is automatic for a N\'eron $1$-gon, see
\cite[\S2.4]{Conrad-KM}).  In particular, the geometric sections $iP$
need not be pairwise distinct.

For example, let $k$ be an algebraically closed field of
characteristic $p$, write
\[
        n=p^a m
        \qquad\text{with}\qquad
        (m,p)=1,
\]
and identify the smooth locus of a N\'eron $1$-gon with $\G_m$.
If $P=\zeta_m$ is a primitive $m$-th root of unity, then
\[
        \sum_{i=0}^{n-1}[P^i]
        =
        p^a\sum_{\xi^m=1}[\xi]
\]
is the subgroup scheme $\mu_n$, since
\[
        x^n-1=(x^m-1)^{p^a}
\]
in characteristic $p$.  Thus $P$ has exact order $n$ in the
Drinfeld sense, although its multiples give only $m$ distinct
geometric points.  They therefore do not form a realization of the
simple matroid $\cT_n$.

Consequently, over primes dividing $n$, the universal level sections
do not directly define a morphism to the realization
scheme $\cR_{\cT_n}$ by the Pl\"ucker-coordinate construction used
below.  An integral comparison would require a separate analysis of
the bad fibers, and possibly a modification of the modular model or
an enlargement of the realization space which allows the matroid
support to degenerate. We do not pursue this direction in the present paper.
\end{remark}

\subsection{Construction of the modular morphism}

Let
\[
        \pi:\cC\longrightarrow X_1(n)^\circ
\]
be the universal smooth-or-nodal irreducible generalized elliptic curve, with
identity section $O$ and exact order-$n$ section $P$.  For
$i\in\Z/n\Z$, put $P_i=iP\in\cC^{\sm}$.
        
The complete linear system $|3O|$ embeds $\cC$ in a relative projective plane.
On every geometric fiber, the labelled sections $(P_i)$ have Pl\"ucker
support $\cT_n$.  Since $X_1(n)^\circ$ is smooth, and hence reduced, over
$\Z[1/n]$, the nonbasis Pl\"ucker sections vanish identically because they
vanish on every geometric fiber; the basis Pl\"ucker sections are nowhere
vanishing.  Every multidegree-zero Pl\"ucker expression is independent of
the local trivializations of the rank-three linear system and of the homogeneous lifts
of the individual sections, and therefore defines a regular function on
$X_1(n)^\circ$.  By the universal property of
$\cR_n=\Spec A_{\cT_n}[1/n]$, these functions define a morphism
\[
        \beta:X_1(n)^\circ\longrightarrow\cR_n.
\]

\begin{proposition}\label{prop:beta-compatible-part1}
For every field $k$ with $\operatorname{char}(k)\nmid n$, the map
\[
        \beta(k):X_1(n)^\circ(k)\longrightarrow\cR_n(k)
\]
agrees, under Proposition~\ref{prop:Rn-field-points}, with the field-valued
map of Part~\ref{part:field}.
\end{proposition}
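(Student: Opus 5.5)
The plan is to unwind both sides down to explicit Plücker coordinates of a single labelled configuration of points in $\PP^2(k)$, and then compare. Let $x\in X_1(n)^\circ(k)$ be represented by a marked plane cubic $(E,O,P)$ over $k$, so that $\operatorname{char}(k)\nmid n$. The first step is to identify the pullback of the universal family along $x\colon\Spec k\to X_1(n)^\circ$. By representability of the $\Gamma_1(n)$ moduli problem, this pullback is a generalized elliptic curve $\cC_x$ over $k$ with sections $O_x,P_x$, and it is isomorphic to $(E,O,P)$ as a curve with sections. This is exactly the identification of $X_1(n)^\circ(k)$ with projective isomorphism classes of marked plane cubics recalled in Section~\ref{sec:modular-morphism}.

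The second step is to use the compatibility of the relative complete linear system $|3O|$ with base change. The fiber of $\pi_*\cO_\cC(3O)$ at $x$ is $H^0(\cC_x,\cO(3O_x))$, because this sheaf is locally free of rank three with formation commuting with base change, by cohomology and base change since $H^1$ vanishes. So the relative plane embedding restricts on the fiber to the embedding of $\cC_x$ by $|3O_x|$. That embedding agrees with the given embedding $E\subset\PP^2_k$ up to an element of $\PGL_3(k)$, since $\cO_E(1)\cong\cO_E(3O)$ and a plane cubic is embedded by its complete linear system, as in the proof of Proposition~\ref{prop:beta-welldefined}. The sections $P_i=iP$ pull back to the points $iP\in E^\sm(k)$, because the group law on $\cC^\sm$ is compatible with base change.

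The third step is to compare coordinates. After choosing a local trivialization of $\pi_*\cO(3O)$ near $x$ and local homogeneous lifts $v_i$ of the sections $P_i$, the regular function on $X_1(n)^\circ$ attached to a multidegree-zero Plücker expression, evaluated at $x$, equals the same expression in the $3\times3$ minors of the matrix $(v_i(x))$. That matrix is a representative of $A(E,O,P)$, up to the $\GL_3(k)$ and column-rescaling ambiguity, and that ambiguity does not change multidegree-zero expressions. Hence $\beta(k)(x)\in\cR_n(k)$ is the $k$-point of $\Spec A_{\cT_n}[1/n]$ whose coordinates are the multidegree-zero Plücker coordinates of $A(E,O,P)$. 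By the proof of Proposition~\ref{prop:Rn-field-points}, this point corresponds to the rescaling class of $A(E,O,P)$, which is $\beta_k(E,O,P)$.

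The main obstacle is the second step: making sure the fiber of the relative $|3O|$ embedding is really the $|3O|$ embedding of the fiber, including at nodal fibers. I will handle this with the base-change statement for $\pi_*\cO(3O)$ on generalized elliptic curves, for which $\deg 3O>0$ gives $H^1=0$ on every fiber.
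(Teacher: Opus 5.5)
Your proposal is correct and follows the same route as the paper, whose one-line proof observes that both maps send $(C,O,P)$ to the multidegree-zero Pl\"ucker coordinates (equivalently, the rescaling class) of the labelled configuration $(iP)_{i\in\Z/n\Z}$. Your extra steps make explicit what the paper leaves implicit: identifying the fiber of the universal family with $(E,O,P)$, and noting that $\pi_*\cO_{\cC}(3O)$ is locally free of rank three with formation commuting with base change.
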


\begin{proof}
Both maps send $(C,O,P)$ to the multidegree-zero Pl\"ucker coordinates,
or equivalently to the rescaling class, of the labelled configuration $(iP)_{i\in\Z/n\Z}$.
\end{proof}

\begin{corollary}\label{cor:geometric-bijective}
For every field $k$ with $\operatorname{char}(k)\nmid n$, the map
\[
        \beta(k):X_1(n)^\circ(k)\longrightarrow\cR_n(k)
\]
is bijective.
\end{corollary}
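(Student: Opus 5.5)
This corollary is formal: it chains the two compatibility statements just established with the field-valued theorem of Part~\ref{part:field}.

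First, Proposition~\ref{prop:Rn-field-points} identifies $\cR_n(k)$ with $R_{\cT_n}(k)$, the set of rescaling classes of $k$-realizations of $\cT_n$. This uses the normalized frame slice of Proposition~\ref{prop:normalized-frame-slice}, and it needs $\operatorname{char}(k)\nmid n$ only so that $k$ is a $\Z[1/n]$-algebra and $\cR_n(k)=\cR_{\cT_n}(k)$. Second, the discussion at the start of Section~\ref{sec:modular-morphism} identifies $X_1(n)^\circ(k)$ with the set of projective isomorphism classes of marked plane cubics of level $n$ used in Part~\ref{part:field}. This relies on the representability of the $\Gamma_1(n)$ moduli problem for $n\ge5$, and on the relative $|3O|$ embedding of smooth curves and N\'eron $1$-gons as smooth or nodal plane cubics. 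Third, Proposition~\ref{prop:beta-compatible-part1} says that under these two identifications the scheme-theoretic map $\beta(k)$ becomes the map $\beta_k$ of Section~\ref{sec:well-defined}.

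Once this commutative square is in place, bijectivity of $\beta(k)$ is equivalent to bijectivity of $\beta_k$. That is Theorem~\ref{thm:field-valued-bijection}: injectivity is Proposition~\ref{prop:field-valued-injection}, surjectivity for $n\ge12$ is Section~\ref{sec:field-valued-bijection-proof}, and the cases $n=10,11$ are Theorems~\ref{thm:n10-bijective} and~\ref{thm:n11-bijective}.

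The only point needing care is that the identification of $X_1(n)^\circ(k)$ with projective isomorphism classes holds for non-algebraically-closed $k$, not just for geometric points. A $k$-point of the fine moduli space gives a generalized elliptic curve over $k$ with a section of exact order $n$, and hence by $|3O|$ a marked cubic over $k$. Conversely, two such $k$-objects are isomorphic as moduli objects exactly when they differ by an element of $\PGL_3(k)$, because any isomorphism preserving $O$ preserves the linear system $|3O|$. I would simply cite the representability statement here; no new geometric input is required.
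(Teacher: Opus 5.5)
Your proposal is correct and is the same argument as the paper's, which simply combines Theorem~\ref{thm:field-valued-bijection} with Propositions~\ref{prop:Rn-field-points} and~\ref{prop:beta-compatible-part1}. Your extra paragraph on identifying $X_1(n)^\circ(k)$ with projective isomorphism classes for non-algebraically-closed $k$ spells out what the paper asserts at the start of Section~\ref{sec:modular-morphism} and uses implicitly in Proposition~\ref{prop:beta-compatible-part1}.
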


\begin{proof}
Combine Theorem~\ref{thm:field-valued-bijection} with
Propositions~\ref{prop:Rn-field-points} and~\ref{prop:beta-compatible-part1}.
\end{proof}

\section{An Artinian-point criterion and normalized deformations}
\label{sec:normalization}

\subsection{The Artinian-point criterion}
\label{sec:artinian-point-criterion}

The following standard consequence of the infinitesimal criterion for
\'etaleness explains why we now focus on establishing bijectivity for points over local Artinian
rings. 
This result will allow us to pass directly from the Artinian reconstruction theorem 
(\Cref{thm:artinian-reconstruction}) to the global scheme-theoretic correspondence.  

\begin{proposition}[Artinian-point criterion for isomorphisms]
\label{prop:artinian-point-criterion}
Let $S=\Spec R$, and let $f:X\longrightarrow Y$
be a morphism locally of finite presentation between locally
Noetherian $S$-schemes.  For an $R$-algebra $A$, write
\[
X(A):=\Hom_S(\Spec A,X)
\]
and similarly for $Y(A)$.  Then the following are equivalent:
\begin{enumerate}[label=\textup{(\roman*)}]
\item
The morphism $f$ is an isomorphism.
\item
For every local Artinian $R$-algebra $A$, the induced map
\[
f(A):X(A)\longrightarrow Y(A)
\]
is bijective.
\end{enumerate}
\end{proposition}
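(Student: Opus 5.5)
The direction (i)$\Rightarrow$(ii) is immediate, since an isomorphism induces bijections on $A$-valued points for every $R$-algebra $A$. For (ii)$\Rightarrow$(i), the plan is to show that $f$ is \'etale, radicial and surjective. Then $f$ is an open immersion that is surjective, hence an isomorphism. Throughout I will use that $X,Y$ are locally Noetherian and that $f$ is locally of finite presentation, which is exactly the setting of the infinitesimal criteria in EGA~IV/Stacks.

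\emph{Surjectivity and radiciality.} A field is a local Artinian ring, so (ii) gives bijectivity of $X(K)\to Y(K)$ for every field $K$ over $R$. Taking $K$ algebraically closed, every geometric point of $Y$ lifts, so $f$ is surjective. For radiciality, suppose two $K$-points of $X$ lying over the same $K$-point of $Y$. By (ii) they coincide, so $f$ is universally injective with trivial residue field extensions in the geometric sense. Applying (ii) with $K=\kappa(y)$ and comparing residue fields also shows that each fiber over $y$ has a unique point with $\kappa(x)=\kappa(y)$. Thus $f$ is radicial and bijective on points with trivial residue field extensions.

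\emph{Formal \'etaleness at points.} Fix $x\mapsto y$. I want $\widehat{\cO}_{Y,y}\to\widehat{\cO}_{X,x}$ to be an isomorphism, which (for morphisms locally of finite type between locally Noetherian schemes, using $\kappa(x)=\kappa(y)$) is equivalent to $f$ being \'etale at $x$. The key step is to use quotients $A=\cO_{Y,y}/\mathfrak m_y^N$: these are local Artinian $R$-algebras. The canonical point $\Spec A\to Y$ lifts uniquely to $X$, and the lift factors through $\Spec\cO_{X,x}$ because its closed point maps to the unique preimage $x$. This produces compatible maps $\cO_{X,x}/\mathfrak m_x^{N'}\leftarrow\cO_{Y,y}/\mathfrak m_y^N$ going the other way, giving a section of $\widehat{\cO}_{Y,y}\to\widehat{\cO}_{X,x}$. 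Applying uniqueness in (ii) to $A=\cO_{X,x}/\mathfrak m_x^N$, the composite endomorphism of $\widehat{\cO}_{X,x}$ agrees with the identity, since both induce the same $A$-point of $Y$. Hence the two maps are mutually inverse.

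This bookkeeping is the main obstacle: checking that the orders of the truncations match, via Artin--Rees style cofinality of $\mathfrak m_x$-adic and $\mathfrak m_y\cO_{X,x}$-adic topologies, which holds because $f$ is quasi-finite at $x$. It also requires that the morphisms of local rings be local and compatible.

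Once $f$ is \'etale and radicial, it is an open immersion by EGA~IV~17.9.1. Being surjective, it is an isomorphism.
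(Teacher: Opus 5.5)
Your argument is correct. It shares the paper's overall skeleton: show $f$ is \'etale, universally injective and surjective, so it is a surjective open immersion and hence an isomorphism. The difference is in how you get \'etaleness.

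The paper obtains \'etaleness globally, in one step, from the Artinian form of the infinitesimal lifting criterion (EGA~IV~17.14.2). For a small extension $A'\to A$, surjectivity of $f(A')$ produces a lift, and injectivity of $f(A)$ and $f(A')$ gives compatibility and uniqueness. Field points enter only afterwards, for radiciality and surjectivity.

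You reverse the order. You first use field points to show that each fiber is a single point $x$ with $\kappa(x)=\kappa(y)$. This is exactly what forces the unique lift of $\Spec\cO_{Y,y}/\mathfrak m_y^N\to Y$ to land at the given $x$. You then test (ii) only on the specific rings $\cO_{Y,y}/\mathfrak m_y^N$ and $\cO_{X,x}/\mathfrak m_x^N$ to obtain $\widehat{\cO}_{Y,y}\cong\widehat{\cO}_{X,x}$. You conclude with the completion criterion: flatness and $\mathfrak m_y\cO_{X,x}=\mathfrak m_x$ descend along the faithfully flat map $\cO_{X,x}\to\widehat{\cO}_{X,x}$.

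The paper's route is shorter but relies on the Artinian lifting criterion as a black box. Yours is more hands-on and shows that only these canonical Artinian quotients, plus fields, need to be tested. The price is that radiciality must be established before \'etaleness.

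Three small corrections.

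\begin{enumerate}
\item The step you flag as the main obstacle is not one. The lift corresponds to a local homomorphism $\cO_{X,x}\to\cO_{Y,y}/\mathfrak m_y^N$, and the target's maximal ideal has vanishing $N$-th power. So the map automatically kills $\mathfrak m_x^N$, and you can take $N'=N$. No Artin--Rees or quasi-finiteness argument is needed. Compatibility as $N$ varies follows from the uniqueness in (ii).
\item The displayed arrow points the wrong way. The maps go $\cO_{X,x}/\mathfrak m_x^N\to\cO_{Y,y}/\mathfrak m_y^N$.
\item The resulting map $\widehat{\cO}_{X,x}\to\widehat{\cO}_{Y,y}$ is a retraction (left inverse) of $\widehat{\cO}_{Y,y}\to\widehat{\cO}_{X,x}$, not a section. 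Your second application of (ii), with $A=\cO_{X,x}/\mathfrak m_x^N$, then shows it is also a right inverse.
\end{enumerate}
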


\begin{proof}
The implication
\[
\textup{(i)}\Longrightarrow\textup{(ii)}
\]
is immediate.  Suppose that \textup{(ii)} holds.

We first show that $f$ is \'{e}tale.  Let $A'\longrightarrow A$
be one of the small extensions of local Artinian $R$-algebras
appearing in the local Artinian criterion for \'{e}taleness.  Suppose
that we are given $x\in X(A)$ and $y'\in Y(A')$ such that
\[
f(x)=y'|_A
\]
in $Y(A)$.  Since
\[
f(A'):X(A')\longrightarrow Y(A')
\]
is surjective, there is an element $x'\in X(A')$ such that
\[
f(x')=y'.
\]
The restriction $x'|_A$ and the given point $x$ have the same image
in $Y(A)$.  Since
\[
f(A):X(A)\longrightarrow Y(A)
\]
is injective, we have
\[
x'|_A=x.
\]
Moreover, the lift $x'$ is unique because $f(A')$ is injective.
Thus $f$ satisfies the unique local Artinian lifting criterion at
every point.  Since $f$ is locally of finite presentation and $Y$ is
locally Noetherian, the infinitesimal criterion for \'{e}taleness
shows that $f$ is \'{e}tale; see
\cite[Proposition~17.14.2]{EGAIV4}.

Taking $A=k$ for an arbitrary field $k$ equipped with an
$R$-algebra structure shows that $f(k):X(k)\longrightarrow Y(k)$
is bijective for every such field.  Its injectivity for every field
implies that $f$ is universally injective, or equivalently radicial;
see \cite[Tag~01S4]{Stacks}.  Its surjectivity for every field also
implies that $f$ is surjective: for any point $y\in Y$, apply the
surjectivity of $f(\kappa(y))$ to the canonical
$\kappa(y)$-valued point of $Y$ determined by $y$.

A universally injective \'{e}tale morphism is an open immersion
\cite[Tag~02LC]{Stacks}.  Since $f$ is also surjective, this open
immersion is an isomorphism.
\end{proof}

\begin{remark}
\label{rem:algebraically-closed-artinian-tests}
A standard geometric-point refinement of the preceding criterion
allows one to test only local Artinian $R$-algebras whose residue
fields are algebraically closed.  
\end{remark}

By Corollary~\ref{cor:geometric-bijective}, the map $\beta$ is already
bijective over every field of characteristic prime to $n$.  
Our goal is to bootstrap this field-theoretic bijectivity to the statement that
\[
        \beta(A):X_1(n)^\circ(A)\longrightarrow\cR_n(A)
\]
is bijective for every local Artinian $\Z[1/n]$-algebra $A$.  We will do this by
fixing the reduction of an $A$-point and comparing the corresponding
deformation problems through normalized representatives.

For the remainder of this section, $A$ denotes a local Artinian
$\Z[1/n]$-algebra and
\[
        k:=A/\mathfrak m_A
\]
denotes its residue field.  We do not assume that 
$A$ contains a coefficient field; this allows the deformation
argument to retain mixed-characteristic directions.

\subsection{Projective normalization over Artinian rings}

\begin{lemma}[Projective normalization]
\label{lem:projective-normalization-artinian}
Let $Q_0,Q_1,Q_2,Q_3$ be sections of $\PP^2_A$ whose ordered special-fiber quadruple is a projective frame in the sense of Definition~\ref{def:projective-frame}.  Then there is a unique element of $\PGL_3(A)$ carrying them to
\[
        (1:0:0),\quad (0:1:0),\quad (0:0:1),\quad (1:1:1).
\]
\end{lemma}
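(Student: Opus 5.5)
The plan is to imitate the classical proof over a field, working with unimodular lifts and using that units of $A$ are exactly the elements with unit image in $k$.

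\emph{Existence.} Choose unimodular lifts $q_0,q_1,q_2,q_3\in A^3$. By hypothesis $\det(q_0,q_1,q_2)$ is a unit, so the matrix $T$ with columns $q_0,q_1,q_2$ lies in $\GL_3(A)$. Write $T^{-1}q_3=(\alpha,\beta,\gamma)^{\mathsf T}$. By Cramer's rule,
\[
\alpha=\frac{\det(q_3,q_1,q_2)}{\det(q_0,q_1,q_2)},\quad
\beta=\frac{\det(q_0,q_3,q_2)}{\det(q_0,q_1,q_2)},\quad
\gamma=\frac{\det(q_0,q_1,q_3)}{\det(q_0,q_1,q_2)}.
\]
Each of these is, up to sign, a ratio of two of the four frame determinants, so $\alpha,\beta,\gamma\in A^\times$. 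Set $D=\operatorname{diag}(\alpha,\beta,\gamma)$ and $g=(TD)^{-1}\in\GL_3(A)$. Then $g q_0=\alpha^{-1}e_1$, $g q_1=\beta^{-1}e_2$, $g q_2=\gamma^{-1}e_3$, and $g q_3=(1,1,1)^{\mathsf T}$. Therefore the class of $g$ in $\PGL_3(A)$ sends $(Q_0,Q_1,Q_2,Q_3)$ to the standard frame.

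\emph{Uniqueness.} Over the local ring $A$ we have $\PGL_3(A)=\GL_3(A)/A^\times$, since $\operatorname{Pic}(A)=0$. Also, a section of $\PP^2_A$ equals another if and only if their unimodular lifts differ by a unit. It therefore suffices to show that if $h\in\GL_3(A)$ fixes the four standard points, then $h$ is a scalar. From $h e_j=\mu_j e_j$ with $\mu_j\in A^\times$, the matrix $h$ is diagonal. From $h(1,1,1)^{\mathsf T}=\nu(1,1,1)^{\mathsf T}$ we get $\mu_1=\mu_2=\mu_3=\nu$, so $h$ is a scalar. Applying this to $g'g^{-1}$, for two elements $g,g'$ carrying the frame to the standard frame, gives uniqueness.

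\emph{Main obstacle.} There is no real obstacle; the only point needing care is the identification $\PGL_3(A)=\GL_3(A)/A^\times$ and the description of $A$-points of $\PP^2$ by unimodular vectors up to units. Both hold because $A$ is local, as already recorded in Definition~\ref{def:unimodular}. Independence of the chosen lifts is automatic, since changing a lift only rescales it by a unit, which does not change the point it represents.
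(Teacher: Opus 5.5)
Your proposal is correct and follows essentially the same route as the paper: you invert the matrix with columns $q_0,q_1,q_2$, then apply a diagonal matrix to send $Q_3$ to $(1:1:1)$. You show its entries are units by Cramer's rule, where the paper instead notes that the special fiber of $Q_3$ avoids the coordinate lines. Your uniqueness argument (the stabilizer of the standard frame is scalar, and $\PGL_3(A)=\GL_3(A)/A^\times$ for local $A$) is just a more explicit form of the paper's ``unique up to a diagonal element, then a unique diagonal element modulo scalars.''
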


\begin{proof}
Choose lifts $v_i\in A^3$ for the sections $Q_i$.  Since the special
fibers of $Q_0,Q_1,Q_2$ are linearly independent, the matrix with columns
$v_0,v_1,v_2$ has unit determinant.  Its inverse carries these three sections
to the coordinate points.  This transformation is unique up to left
multiplication by a diagonal element of $\PGL_3(A)$.  The image of $Q_3$ has
homogeneous coordinates $(a:b:c)$ with $a,b,c\in A^\times$, because its
special fiber lies on none of the coordinate lines.  There is a unique
diagonal element modulo scalars that sends $(a:b:c)$ to $(1:1:1)$.  The
composite is therefore the unique projective transformation with the stated
property.

\end{proof}

\subsection{Normalized representation deformations}

Let $\bar x\in \cR_n(k)$ and let $\bar p=(P_i)$ be the corresponding normalized $k$-realization.  The point $\bar x$ lies on the normalized chart determined by the basis $\{0,1,2\}$ and the frame-label set $F=\{0,1,2,3\}$; equivalently, the ordered projective frame $(P_0,P_1,P_2,P_3)$ has been put in standard normalized form.  On this chart, the degree-zero Pl\"ucker coordinates are equivalent to the affine coordinates of a unique normalized labelled configuration.

\begin{definition}
A \emph{normalized $A$-valued deformation} of $\bar p$ is a labelled collection of sections
\[
        (\widetilde P_i)_{i\in\Z/n\Z}\subset \PP^2_A
\]
reducing to $\bar p$, satisfying the determinant vanishing and nonvanishing conditions of $\cT_n$, and such that
\[
        \widetilde P_0=(1:0:0),\quad
        \widetilde P_1=(0:1:0),\quad
        \widetilde P_2=(0:0:1),\quad
        \widetilde P_3=(1:1:1).
\]
\end{definition}

Whenever we pass from these projective sections to matrix coordinates, we use
for each $i\notin F$ the unique unimodular representative whose coordinate
corresponding to the chosen basis $B_i$ is equal to $1$.  This is possible
because that coordinate is a unit, and it is precisely the affine chart
normalization used in the definition of $N_n$.

\begin{proposition}
\label{prop:canonical-normalized-deformations}
Let $\bar x\in \cR_n(k)$ correspond to the normalized representation
$\bar p$.  For every local Artinian $\Z[1/n]$-algebra $A$ with residue
field $k$, deformations of $\bar x$ to $\cR_n(A)$ are naturally in bijection with 
normalized $A$-valued deformations of $\bar p$.
\end{proposition}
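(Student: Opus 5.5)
The plan is to deduce the statement directly from the normalized frame slice of Proposition~\ref{prop:normalized-frame-slice}. That proposition gives $A_{\cT_n}\cong\Gamma(N_n,\mathcal O_{N_n})$, and inverting $n$ gives $\cR_n\cong N_n\times\Spec\Z[1/n]$. An element of $\cR_n(A)$ is therefore the same thing as a $\Z[1/n]$-algebra map $\Gamma(N_n,\mathcal O_{N_n})[1/n]\to A$. Concretely, it is an assignment of values in $A$ to the free affine coordinates of the normalized $3\times n$ matrix satisfying three conditions: the frame columns are fixed to the standard frame, for each $i\notin F$ the coordinate singled out by $B_i$ is equal to $1$, all nonbasis $3\times 3$ minors vanish in $A$, and all basis minors are units of $A$. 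A deformation of $\bar x$ is such a point whose composite with $A\to k$ equals $\bar x$.

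Next I will build the two maps. Given a deformation $x\in\cR_n(A)$, the columns of the resulting matrix are unimodular, since each column has an entry equal to $1$. They therefore define sections $\widetilde P_i\in\PP^2_A(A)$. By construction the frame sections are standard, the determinant conditions of $\cT_n$ hold, and reduction mod $\mathfrak m_A$ recovers the normalized $k$-matrix of $\bar p$, because $\bar x$ corresponds to $\bar p$ under Proposition~\ref{prop:Rn-field-points}. So we get a normalized $A$-valued deformation.

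Conversely, let $(\widetilde P_i)$ be a normalized deformation and take $i\notin F$. The $B_i$-coordinate of $P_i$ is a unit over $k$: up to sign it equals the minor $p_{B_i}$, computed with the frame columns $e_1,e_2,e_3$, and $B_i$ is a basis. Since $A$ is local, any unimodular lift of $\widetilde P_i$ then has that coordinate in $A^\times$. Dividing by it produces the unique representative with $B_i$-coordinate $1$, as remarked after the definition. The resulting matrix satisfies the vanishing conditions by hypothesis. Its basis minors are units because they are units on the special fiber, and $A$ is local. Hence the matrix defines an $A$-point of $N_n$ lifting $\bar x$.

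Finally I will check that the two constructions are mutually inverse and functorial in $A$. This holds because each normalized section has exactly one chart representative, and the frame sections are fixed outright. The main subtlety is the bookkeeping in the identification $A_{\cT_n}\cong\Gamma(N_n,\mathcal O_{N_n})$, meaning the signs and the multidegree corrections by the pinned units of $W$. One has to be sure this identification is an isomorphism of rings and not merely a bijection on field points, so that it gives a bijection on $A$-points for nonreduced $A$. This is already supplied by the ring isomorphism of Proposition~\ref{prop:normalized-frame-slice} via Lemma~\ref{lem:graded-ring}. I would state explicitly that the identification is functorial in arbitrary $\Z[1/n]$-algebras, so that passing from $\cR_n(A)$ to matrices involves no choices beyond those fixed once and for all.
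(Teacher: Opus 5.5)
Your proposal is correct and follows the same route as the paper. Both arguments deduce the statement from the ring isomorphism $A_{\cT_n}\cong\Gamma(N_n,\mathcal O_{N_n})$ of Proposition~\ref{prop:normalized-frame-slice}, so that a deformation of $\bar x$ is just an $A$-point of $N_n\times\Spec\Z[1/n]$ reducing to $\bar p$, functorially in $A$. Your version additionally spells out checks the paper leaves implicit: the $B_i$-coordinate equals a basis minor against the standard frame, so it is a unit in the local ring $A$, and hence each normalized section has a unique chart representative.
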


\begin{proof}
After base change to $\Z[1/n]$, Proposition~\ref{prop:normalized-frame-slice}
identifies $\cR_n$ with the normalized representation scheme $N_n$.
Therefore an $A$-valued deformation of $\bar x$ is exactly an $A$-point of
$N_n$ reducing to $\bar p$: a labelled configuration satisfying the
determinant vanishing and nonvanishing conditions of $\cT_n$ and the stated
frame normalization.  The identification is functorial in $A$.
\end{proof}

\section{Artinian reconstruction}
\label{sec:artinian-reconstruction}

In this section we prove a formal deformation version of the reconstruction
theorem from Part~\ref{part:field}.  Let $A$ be a local Artinian
$\Z[1/n]$-algebra, let
\[
        k:=A/\mathfrak m_A,
\]
and let
\[
        (\widetilde P_i)_{i\in\Z/n\Z}\subset \PP^2_A
\]
be a normalized $A$-valued deformation of a $k$-realization of $\cT_n$.

\subsection{The relative seed pencil}

\begin{lemma}[Relative seed pencil]
\label{lem:relative-seed-pencil}
Define relative lines
\[
\begin{aligned}
L_1&=\widetilde P_0\widetilde P_4\widetilde P_{-4},&
L_2&=\widetilde P_1\widetilde P_2\widetilde P_{-3},&
L_3&=\widetilde P_{-1}\widetilde P_{-2}\widetilde P_3,\\
M_1&=\widetilde P_0\widetilde P_2\widetilde P_{-2},&
M_2&=\widetilde P_1\widetilde P_3\widetilde P_{-4},&
M_3&=\widetilde P_{-1}\widetilde P_{-3}\widetilde P_4.
\end{aligned}
\]
Let
\[
        D_1=L_1\cup L_2\cup L_3,
        \qquad
        D_2=M_1\cup M_2\cup M_3.
\]
Then the scheme-theoretic intersection $Z=D_1\cap D_2$ is the disjoint union of the nine sections
\[
        \widetilde P_{-4},\widetilde P_{-3},\ldots,\widetilde P_4,
\]
and the $A$-module of cubic forms vanishing on $Z$ is free of rank two, generated by $D_1$ and $D_2$.  Moreover, there is a unique relative cubic
$\mathcal C\subset \PP^2_A$ passing through
\[
        \widetilde P_{-4},\widetilde P_{-3},\ldots,\widetilde P_5.
\]
\end{lemma}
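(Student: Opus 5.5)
We first make the relative lines precise. For each displayed triple $\{a,b,c\}$ with $a+b+c=0$, the sections $\widetilde P_a,\widetilde P_b$ have unimodular lifts $v_a,v_b$ whose reductions are independent. Hence the cross product $\ell=v_a\times v_b\in A^3$ is unimodular and defines a relative line, i.e.\ a linear form whose reduction is nonzero. This line contains $\widetilde P_c$, because the nonbasis determinant $\det(v_a,v_b,v_c)$ vanishes in $A$. The linear forms $\ell_{L_j},\ell_{M_j}$ are well defined up to units. We set $D_1=\ell_{L_1}\ell_{L_2}\ell_{L_3}$ and $D_2=\ell_{M_1}\ell_{M_2}\ell_{M_3}$, and let $Z=V(D_1,D_2)\subset\PP^2_A$.

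\textbf{Step 1: regular sequence and flatness.} By Proposition~\ref{prop:seed-pencil}, the reductions $\bar D_1,\bar D_2$ have no common component. So $\bar D_1,\bar D_2$ is a regular sequence in $k[x,y,z]$, and $\bar Z$ is a reduced complete intersection consisting of the nine $k$-points $P_{-4},\dots,P_4$. By the local flatness criterion (applied to $A[x,y,z]$ over the Artinian ring $A$, using that $D_1$ is a nonzerodivisor on the fiber and lifting the regularity), $D_1,D_2$ is a regular sequence on $A[x,y,z]$ and $A[x,y,z]/(D_1,D_2)$ is $A$-flat. The Koszul resolution then gives
\[
0\to A[x,y,z](-6)\to A[x,y,z](-3)^2\to A[x,y,z]\to A[x,y,z]/(D_1,D_2)\to0 .
\]
Its degree-$3$ part shows that $(D_1,D_2)_3$ is free of rank two on $D_1,D_2$. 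We must still identify $(D_1,D_2)$ with the full saturated ideal of $Z$; this holds because a graded complete intersection of positive-degree forms in three variables has depth $\ge1$ after the flat base change, so the ideal is saturated.

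\textbf{Step 2: $Z$ is the union of the nine sections.} Each $\widetilde P_i$ with $-4\le i\le4$ lies on one row-line and one column-line, so it is a closed subscheme of $Z$. Now $Z$ is finite flat over $A$ of rank $9$, and its special fiber is nine reduced $k$-points. Hence $Z$ is étale over $A$. Because $A$ is Henselian, $Z$ decomposes as a disjoint union of nine copies of $\Spec A$, one over each closed point. The section $\widetilde P_i$ is a closed subscheme $\Spec A\to Z$ lying over $P_i$, so it must be the whole corresponding component. This proves $Z=\bigsqcup\widetilde P_i$.

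\textbf{Step 3: uniqueness of the tenth-point cubic.} Cubic forms vanishing on all nine sections are exactly those vanishing on $Z$, which by Step 1 form $A D_1\oplus A D_2$. Evaluation at a unimodular lift $v_5$ of $\widetilde P_5$ gives an $A$-linear map $A^2\to A$, $(s,t)\mapsto sD_1(v_5)+tD_2(v_5)$. By Corollary~\ref{cor:unique-window-cubic}, its reduction is nonzero, so one of $D_1(v_5),D_2(v_5)$ is a unit and the map is surjective. Its kernel is therefore a free rank-one direct summand, generated by a form $F$ whose reduction is the cubic $C$ of the field case. Thus $F$ is a nonzero relative cubic, defining a flat family $\mathcal C$, and it is unique up to $A^\times$ among cubic forms with nonzero reduction through the ten sections.

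\textbf{Main obstacle.} We expect the main obstacle to be Step 1's passage from the field statement to the claim that $(I_Z)_3$ is free and generated by $D_1,D_2$ over $A$. Care is needed to argue via flatness plus Nakayama: a cubic vanishing on $Z$ lies in the saturated ideal, which equals $(D_1,D_2)$ because the quotient is flat with Cohen--Macaulay fibers. We would handle this by induction on the length of $A$, using small extensions and the field case at each stage.
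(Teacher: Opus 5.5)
Your proof is correct, and its skeleton (lift the regular sequence from the special fiber, use the Koszul resolution, evaluate at $\widetilde P_5$) is the paper's. The differences lie in how you identify $Z$ and how you compute the cubics vanishing on it.

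The paper works locally: at each of the nine points only one row-line and one column-line pass, so $Z$ is locally a transverse intersection of two relative lines. It then computes $H^0(\PP^2_A,\mathcal I_Z(3))\cong A^{2}$ from the sheafified Koszul resolution and $H^0(\cO(-3))=H^1(\cO(-3))=0$, so saturation never arises.

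You instead prove flatness of $Q=A[x,y,z]/(D_1,D_2)$ globally. You then get $Z=\coprod\widetilde P_i$ because a finite flat $A$-scheme whose special fiber is nine reduced $k$-points is nine copies of $\Spec A$ (Artinianness already gives this; no Henselian argument is needed). Finally you read off $(D_1,D_2)_3=AD_1\oplus AD_2$ from the graded Koszul complex. This forces you to show that $(D_1,D_2)$ is saturated. Your justification, ``depth $\ge1$ after the flat base change,'' is not yet an argument, since $A$ is not a flat base change of $k$.

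The clean fix is the filtration you allude to. Flatness gives $\mathfrak m^iQ/\mathfrak m^{i+1}Q\cong(\mathfrak m^i/\mathfrak m^{i+1})\otimes_k\bar Q$. This is a finite direct sum of copies of the one-dimensional Cohen--Macaulay ring $\bar Q$, which has no $(x,y,z)$-torsion. Hence $Q$ has none either, i.e.\ $(D_1,D_2)$ is saturated. Equivalently, $H^i_{(x,y,z)}(A[x,y,z])=0$ for $i<3$ together with the Koszul resolution gives $H^1_{(x,y,z)}((D_1,D_2))=0$; this is the local-cohomology counterpart of the paper's $H^1(\cO(-3))=0$.

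The paper's sheaf-theoretic route is shorter. Yours is more hands-on and makes explicit both the flatness of $Z$ and its splitting into the nine sections.
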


\begin{proof}
On the special fiber, the six row and column lines are pairwise distinct and form the $3\times3$ grid
\[
\begin{array}{c|ccc}
        & M_1&M_2&M_3\\ \hline
L_1& P_0&P_{-4}&P_4\\
L_2& P_2&P_1&P_{-3}\\
L_3& P_{-2}&P_3&P_{-1}.
\end{array}
\]
The nonvanishing determinants expressing these distinctness and transversality conditions are nonzero modulo the maximal ideal, hence they are units in $A$.  Therefore the relative intersections $L_i\cap M_j$ are exactly the nine displayed disjoint sections.  At each point of their union, the reductions of local equations for $D_1$ and $D_2$ are, up to units, equations of a transverse row-line and column-line.  Since $\mathcal O_{\PP^2_A}$ is flat over $A$, two successive applications of the slicing criterion \cite[Tag~00ME]{Stacks} show that these equations form a regular sequence and that their quotient is $A$-flat.  Hence
\[
        Z=D_1\cap D_2=\coprod_{i=-4}^4 \widetilde P_i
\]
is a relative complete intersection of two cubics.

The ideal sheaf $\mathcal I_Z$ has the Koszul resolution
\[
0\longrightarrow \mathcal O_{\PP^2_A}(-6)
 \longrightarrow \mathcal O_{\PP^2_A}(-3)^{\oplus2}
 \longrightarrow \mathcal I_Z
 \longrightarrow 0.
\]
Twisting by $\mathcal O(3)$ gives
\[
0\longrightarrow \mathcal O_{\PP^2_A}(-3)
 \longrightarrow \mathcal O_{\PP^2_A}^{\oplus2}
 \longrightarrow \mathcal I_Z(3)
 \longrightarrow 0.
\]
Since
\[
H^0(\PP^2_A,\mathcal O(-3))=H^1(\PP^2_A,\mathcal O(-3))=0,
\]
taking global sections gives
\[
        H^0(\PP^2_A,\mathcal I_Z(3))\cong A^{\oplus2},
\]
with generators represented by $D_1$ and $D_2$.

Evaluation at the section $\widetilde P_5$ gives an $A$-linear map
\[
        A^{\oplus2}\longrightarrow \widetilde P_5^*\mathcal O_{\PP^2_A}(3)\cong A.
\]
Modulo the maximal ideal this map is nonzero, because in the special fiber $P_5$ does not lie in $D_1\cap D_2$.  Hence the map is surjective.  Its kernel is therefore a projective $A$-module of rank one, which is in fact free of rank one since $A$ is local.  A generator of the kernel gives a relative cubic $\mathcal C$ through the ten sections, unique up to multiplication by a unit.  
\end{proof}

\begin{remark}\label{rem:relative-cubic-flat}
The defining form of the relative cubic $\mathcal C$ has nonzero reduction on
$\PP^2_k$, and hence is a nonzerodivisor in every local ring of
$\PP^2_k$.  Since $\mathcal O_{\PP^2_A}$ is flat over $A$, the slicing
criterion \cite[Tag~00ME]{Stacks} shows that it remains a nonzerodivisor over
$A$ and that $\mathcal C$ is flat over $A$.  Thus $\mathcal C$ is a relative
effective Cartier divisor on $\PP^2_A$.
\end{remark}

\subsection{Relative Chasles propagation}

\begin{lemma}[Relative Chasles completion]
\label{lem:relative-chasles-completion}
Let $Z$ be a $3\times3$ grid of disjoint sections of $\PP^2_A$, obtained as the transverse intersection of two reducible relative cubics $D_1$ and $D_2$, each a union of three relative lines.  Let $Z'$ be the union of any eight of the nine sections.  Then every relative cubic containing $Z'$ contains all of $Z$.
\end{lemma}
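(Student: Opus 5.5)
The plan is to reduce the relative statement to linear algebra on the free rank-two module of cubics through $Z$, using the Koszul computation already carried out in the proof of Lemma~\ref{lem:relative-seed-pencil}. Exactly as there, transversality on the special fiber makes $(D_1,D_2)$ a regular sequence with $A$-flat quotient $Z=\coprod_{j=1}^9 Q_j$. Twisting the Koszul resolution by $\mathcal O(3)$ and taking global sections shows that $H^0(\PP^2_A,\mathcal I_Z(3))$ is free of rank two on $D_1,D_2$. In the same way, the restriction map
\[
H^0(\PP^2_A,\mathcal O(3))\longrightarrow H^0(Z,\mathcal O_Z(3))\cong A^{\oplus 9}
\]
is surjective, because $H^1(\PP^2_A,\mathcal I_Z(3))$ vanishes: it is sandwiched between $H^1(\mathcal O)^{\oplus2}=0$ and $H^2(\mathcal O(-3))$, and the relevant map $H^2(\mathcal O(-3))\to H^2(\mathcal O)^{\oplus 2}=0$ has kernel $H^2(\mathcal O(-3))\cong A$. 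So a little more care is needed here.

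A cleaner route is to compute ranks directly. The image of restriction is $A^{10}/A^{2}$, which is free of rank $8$, sitting inside $A^9$. I will show that this image is exactly the kernel of a single functional $\phi:A^9\to A$ whose nine coefficients are all units. On the special fiber this is the classical Chasles theorem (Theorem~\ref{thm:CBC}). The cubics $k^{10}/\langle D_1,D_2\rangle$ inject into $k^9$ with $8$-dimensional image, and any eight of the nine points impose independent conditions. The latter holds because a cubic through eight of the points contains the ninth, so cubics through those eight form the same pencil. Hence the annihilator of the image is a line spanned by a functional all of whose coordinates are nonzero.

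To lift this, I note that the restriction map $\rho:A^{10}\to A^9$ has cokernel $Q$, and by right exactness $Q\otimes k$ is the cokernel on the special fiber, which is one-dimensional. So $Q$ is cyclic. I will also need $\ker\rho$ to be free of rank $2$, which the Koszul computation gives, with its formation commuting with base change because $D_1,D_2$ remain independent mod $\mathfrak m$. Then the image of $\rho$ is free of rank $8$ and is a direct summand on the special fiber. By Nakayama, after choosing a basis, the image is a free rank-$8$ direct summand of $A^9$. Its complement is free of rank one, so the image equals $\ker\phi$ for a functional $\phi$ that reduces to the Chasles functional. The coefficients of $\phi$ are therefore units.

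Now let $\mathcal C$ be a relative cubic with form $F$ containing $Z'=Z\setminus Q_9$. Then $\rho(F)=(0,\dots,0,c)$ with $\phi(\rho(F))=u_9c=0$ and $u_9$ a unit, so $c=0$, which means $F$ vanishes on $Q_9$. Here I use that containing the section $Q_j$ means exactly $F$ pulls back to zero in $Q_j^*\mathcal O(3)\cong A$.

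The main obstacle is justifying that the image of $\rho$ is a direct summand, that is, controlling base change of the cokernel. I would handle this via the Koszul sequence, which gives $\operatorname{coker}\rho\cong H^1(\mathcal I_Z(3))\cong\ker\bigl(H^2(\mathcal O(-3))\to 0\bigr)\cong A$. That module is free, so the sequence $0\to A^2\to A^{10}\to A^9\to A\to0$ is exact with free terms, hence split, and it commutes with base change to $k$.
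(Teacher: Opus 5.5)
Your argument is correct, but it runs dual to the paper's.

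The paper never studies the image of the nine-point restriction map. It uses classical Chasles on the special fiber plus Nakayama to show that $V=H^0(\PP^2_A,\mathcal O(3))\to E_{Z'}\cong A^8$ is surjective. Hence the kernel $K_{Z'}$ is free of rank two and its formation commutes with base change. It then applies Nakayama to $K_{Z'}/K_Z$, which vanishes modulo $\mathfrak m_A$ because $K_Z\otimes k=K_{Z'}\otimes k$ by classical Chasles.

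You instead use the Koszul resolution to identify
$\operatorname{coker}(V\to A^9)\cong H^1(\PP^2_A,\mathcal I_Z(3))\cong H^2(\PP^2_A,\mathcal O(-3))\cong A$.
So the image of restriction is $\ker\phi$ for a single functional $\phi$. Classical Chasles says $\phi\otimes k$ has no zero coordinate, so every coefficient of $\phi$ is a unit.

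What each route buys:
\begin{itemize}
\item The paper's route is more economical. It needs nothing beyond the computation of $K_Z$ already done in Lemma~\ref{lem:relative-seed-pencil}, plus two applications of Nakayama.
\item Your route yields an explicit relative Cayley--Bacharach relation $\sum_j u_jF(Q_j)=0$ with unit coefficients, valid for all cubic forms $F$ at once. It also exhibits the one-dimensional obstruction as $H^2(\mathcal O(-3))$, which is the duality underlying Chasles' theorem.
\end{itemize}

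Two presentational points:
\begin{itemize}
\item The surjectivity claim at the start of your first paragraph is false, as you yourself notice, and should be deleted.
\item The Nakayama step in your third paragraph needs injectivity of $\operatorname{im}(\rho)\otimes k\to k^9$. You do have this from the base-change compatibility of $\ker\rho$. However, the split exact sequence $0\to A^2\to A^{10}\to A^9\to A\to 0$ of free modules in your last paragraph is the cleaner justification and makes that earlier step unnecessary.
\end{itemize}
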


\begin{proof}
Let
\[
        V:=H^0(\PP^2_A,\mathcal O(3)),
        \qquad
        E_Z:=H^0(Z,\mathcal O_Z(3)),
        \qquad
        E_{Z'}:=H^0(Z',\mathcal O_{Z'}(3)).
\]
Since $Z$ and $Z'$ are disjoint unions of sections, $E_Z\cong A^9$ and $E_{Z'}\cong A^8$ are finite free $A$-modules, and formation of these modules commutes with reduction to the residue field.  Let
\[
        K_Z:=\ker(V\to E_Z),
        \qquad
        K_{Z'}:=\ker(V\to E_{Z'}).
\]
By the complete-intersection calculation used in \cref{lem:relative-seed-pencil}, $K_Z$ is a free $A$-module of rank $2$, generated by $D_1$ and $D_2$, and its formation commutes with base change.

On the special fiber, the eight points of $Z'_k$ impose eight independent conditions on cubics: equivalently, the space of cubics through them has dimension $2$, by the classical Chasles theorem for the reduced $3\times3$ grid.  Hence the evaluation map
\[
        V\otimes_A k\longrightarrow E_{Z'}\otimes_A k
\]
is surjective.  It follows from Nakayama's lemma that $V\to E_{Z'}$ is surjective.  Thus $K_{Z'}$ is a projective, hence free, $A$-module of rank $2$, and its formation commutes with base change.

We have an inclusion $K_Z\subseteq K_{Z'}$.  After tensoring with $k$, this inclusion becomes an equality, again by the classical Chasles theorem on the special fiber.  Therefore the cokernel $K_{Z'}/K_Z$ is a finite $A$-module 
whose special fiber is zero. Nakayama's lemma gives $K_{Z'}=K_Z$, and thus every cubic through $Z'$ is a cubic through $Z$, as claimed.
\end{proof}

\begin{lemma}[Relative propagation]
\label{lem:relative-propagation}
Let $\cC$ be the relative cubic of
Lemma~\ref{lem:relative-seed-pencil}.  Then every marked section
$\widetilde P_i$ lies on $\cC$.
\end{lemma}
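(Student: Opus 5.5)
The plan is to rerun the field-valued propagation of Sections~\ref{sec:odd-propagation} and~\ref{sec:even-propagation} (and of Subsections~\ref{subsec:n10}, \ref{subsec:n11} for $n=10,11$) verbatim over $A$. The one change is that Lemma~\ref{lem:grid-completion} is replaced by Lemma~\ref{lem:relative-chasles-completion}. By Lemma~\ref{lem:relative-seed-pencil}, $\cC$ contains $\widetilde P_i$ for $i\in[-4,5]$. We then apply, in the same order, the same sequence of zero-sum grids: first $\Gamma(-5,0,1,3)$ to obtain $\widetilde P_{-5}$; then $\pm\Gamma(0,1,2,r-2)$ for the successive values of $r$; and in the even case finally $\Gamma_m$ to obtain $\widetilde P_m$. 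At each stage the eight already-known sections lie on $\cC$, so the relative completion lemma forces the ninth section onto $\cC$. Here ``a section $\widetilde P$ lies on $\cC$'' means that the defining cubic form $F$ of $\cC$ pulls back to zero along $\widetilde P$. This is exactly the statement that $F$ lies in the kernel $K_{Z'}$ or $K_Z$ of the evaluation map used in the proof of Lemma~\ref{lem:relative-chasles-completion}.

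The only thing to verify is the hypothesis of Lemma~\ref{lem:relative-chasles-completion}: for each grid used, its nine sections must form a transverse intersection of two reducible relative cubics, each a union of three relative lines. I would argue this exactly as in the first paragraph of the proof of Lemma~\ref{lem:relative-seed-pencil}.

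First, for each row or column triple $\{a,b,c\}$ with $a+b+c=0$, we need a relative line through the three sections. Since the special-fiber points $P_a,P_b$ are distinct, the $2\times 2$ minors of the lifts of $\widetilde P_a,\widetilde P_b$ generate the unit ideal. Hence the cross product $\widetilde p_a\times\widetilde p_b$ is unimodular and defines a relative line $\ell$ through both sections. The vanishing of $\det(\widetilde p_a,\widetilde p_b,\widetilde p_c)$ (the nonbasis condition imposed on normalized deformations) then says precisely that $\widetilde P_c\in\ell$.

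Second, we need the transversality. On the special fiber, Lemma~\ref{lem:grid-completion} (together with Lemma~\ref{lem:no-four-collinear}) shows that the six lines are pairwise distinct and meet exactly in the nine distinct grid points. The determinants witnessing this are $\det$ of two line vectors and a point, and they are nonzero mod $\mathfrak m_A$, hence units. Consequently the intersections $R_i\cap C_j$ are exactly the nine pairwise disjoint sections. The slicing criterion \cite[Tag~00ME]{Stacks} then shows that $R\cap C$ is a flat relative complete intersection equal to their disjoint union, just as for $Z$ in Lemma~\ref{lem:relative-seed-pencil}.

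I expect the main obstacle to be bookkeeping rather than any new idea. One must make sure that every grid invoked has pairwise distinct entries modulo $n$ in each case $n=10$, $n=11$, odd $n\ge13$ and even $n\ge12$; this is already done in Part~\ref{part:field}, since distinctness is a purely combinatorial condition on indices and is therefore insensitive to $A$. One must also check that the notion of ``containment'' is preserved under each inductive step: the relative lemma needs $\cC$ to be a genuine cubic form over $A$ vanishing on eight sections, and that is exactly what the induction hypothesis supplies. For $n=10$ nothing beyond Lemma~\ref{lem:relative-seed-pencil} is needed, since the ten sections $\widetilde P_{-4},\dots,\widetilde P_5$ are all of them.
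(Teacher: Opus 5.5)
Your proposal is correct and is essentially the paper's proof. The paper likewise reruns the bootstrap and propagation grids of Part~\ref{part:field} (including the $n=10,11$ cases) with Lemma~\ref{lem:relative-chasles-completion} in place of Lemma~\ref{lem:grid-completion}, and it justifies the hypotheses only by noting that every determinant certifying nine distinct transverse sections is nonzero on the special fiber and hence a unit in $A$. Your version spells out more of that verification, namely the unimodular cross product giving the relative row and column lines and the slicing-criterion argument borrowed from Lemma~\ref{lem:relative-seed-pencil}.
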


\begin{proof}
The proofs of Lemmas~\ref{lem:odd-first-bootstrap},
\ref{lem:odd-propagation-step}, \ref{lem:even-first-bootstrap},
\ref{lem:even-propagation-step}, and~\ref{lem:half-period-grid} apply verbatim,
with Lemma~\ref{lem:relative-chasles-completion} in place of
Lemma~\ref{lem:grid-completion}.  Indeed, every determinant used to certify
that a grid consists of nine distinct transverse sections is nonzero on the
special fiber and hence is a unit in $A$.  
(For $n=10$ the seed window already contains every residue class, and for $n=11$ the single bootstrap grid from
Subsection~\ref{subsec:n11} supplies the remaining section.)
\end{proof}

\subsection{Relative group-law recovery}

We use the following Picard-theoretic version of the group law on a generalized elliptic curve.

\begin{lemma}[Relative Picard form of the chord law]
\label{lem:relative-picard-chord}
Let $A$ be a local Artinian ring, and let $\mathcal C\subset\PP^2_A$ be a flat family whose special fiber is a smooth or irreducible nodal plane cubic.  Let $O$ be a section through the smooth locus.  Then $\mathcal C$ is a generalized elliptic curve over $A$, its smooth locus $\mathcal C^{\rm sm}$ is a commutative group scheme with identity $O$, and the Abel--Jacobi map identifies $\mathcal C^{\rm sm}$ with $\Pic^0_{\mathcal C/A}$.  If $Q,R,S$ are smooth sections with pairwise distinct special fibers, and if $H$ denotes the divisor class of a relative line section, then $Q,R,S$ are collinear if and only if
\[
        [Q-O]+[R-O]+[S-O]=[H-3O]
\]
in $\Pic^0_{\mathcal C/A}(A)$.
\end{lemma}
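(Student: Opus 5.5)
The plan has three steps: show that $\mathcal C$ is a generalized elliptic curve, identify its smooth locus with $\Pic^0$, and then read the chord law off the divisor cut out by a line.

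\emph{Step 1: $\mathcal C$ is a generalized elliptic curve.} By Remark~\ref{rem:relative-cubic-flat}, $\mathcal C$ is a relative effective Cartier divisor of degree $3$ in $\PP^2_A$. It is therefore flat and proper over $\Spec A$, and its arithmetic genus is $1$, since $\chi(\mathcal O_{\mathcal C})=0$ is computed on the special fiber. Its only fiber is the special fiber, which is smooth or an irreducible nodal cubic, i.e.\ a N\'eron $1$-gon. Over an Artinian base, being a generalized elliptic curve in the sense of Deligne--Rapoport is a condition on the geometric fibers together with flatness and properness. So $\mathcal C$, with the section $O$ through the smooth locus, is a generalized elliptic curve. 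Deligne--Rapoport \cite{DR} then give $\mathcal C^{\rm sm}$ a commutative group scheme structure with identity $O$, and \cite{DR,Conrad-KM} show that the Abel--Jacobi map $Q\mapsto \cO(Q-O)$ is an isomorphism $\mathcal C^{\rm sm}\cong\Pic^0_{\mathcal C/A}$. On the nodal fiber this is the generalized Jacobian statement of \cite{ART}, extended over $A$ by deformation invariance of both sides. I would cite these facts rather than reprove them.

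\emph{Step 2, forward direction.} Since $\mathcal C$ is a plane cubic, the class of $H$ is that of $\cO_{\mathcal C}(1)$. Suppose $Q,R,S$ lie on a relative line $\ell$, given by a linear form with unit content. Then $\ell\cap\mathcal C$ is a relative effective Cartier divisor of degree $3$ on $\mathcal C$ containing the three sections. Their special fibers are pairwise distinct, so the sections are disjoint, and $Q+R+S$ is a degree-$3$ subdivisor of $\ell\cap\mathcal C$. Both divisors are flat of degree $3$ over $A$, so they coincide. Hence $\cO(Q+R+S)\cong\cO_{\mathcal C}(1)$, which gives the displayed identity in $\Pic^0$.

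\emph{Step 2, converse.} Suppose $[Q+R+S]=[H]$. Then there are a line bundle $\mathcal N$ on $\Spec A$ (trivial, since $A$ is local) and a section $s$ of $\cO_{\mathcal C}(1)$ whose divisor is $Q+R+S$. The restriction $H^0(\PP^2_A,\cO(1))\to H^0(\mathcal C,\cO(1))$ is an isomorphism. This follows from the exact sequence $0\to\cO(-2)\to\cO(1)\to\cO_{\mathcal C}(1)\to0$ and the vanishing of $H^0$ and $H^1$ of $\cO_{\PP^2_A}(-2)$. So $s$ is a linear form. Its reduction is nonzero because $Q+R+S$ is a divisor, so it defines a relative line containing all three sections.

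\emph{Main obstacle.} The delicate point is Step 1, and specifically the Abel--Jacobi isomorphism over a non-reduced base that need not contain a coefficient field. The cleanest route is to quote representability of $\Pic^0$ of a generalized elliptic curve and the Deligne--Rapoport identification. One can also argue directly: Abel--Jacobi is a morphism of smooth $A$-schemes that is an isomorphism on the special fiber, hence an isomorphism over the Artinian base by the fibral criterion. A secondary subtlety is the equality of divisors in Step 2, which holds because a closed subscheme of a finite flat degree-$3$ scheme that is itself finite flat of degree $3$ must be everything.
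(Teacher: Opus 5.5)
Your proposal is correct and follows essentially the same route as the paper. Like the paper, you cite Deligne--Rapoport (II, Prop.~2.7) for the generalized-elliptic-curve structure with identity $O$ and for the Abel--Jacobi isomorphism, and you get the chord law from $\mathcal O_{\mathcal C}(Q+R+S)\cong\mathcal O_{\mathcal C}(1)$ together with the restriction isomorphism $H^0(\PP^2_A,\mathcal O(1))\xrightarrow{\sim}H^0(\mathcal C,\mathcal O_{\mathcal C}(1))$. Your degree-comparison argument in the forward direction only fills in a step the paper asserts directly; one small correction is that a generalized elliptic curve is extra structure rather than a fibral condition, and what DR~II.2.7 actually supplies is that, for irreducible fibers, this structure exists and is unique once $O$ is fixed.
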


\begin{proof}
Since the special fiber is smooth or irreducible nodal, $\mathcal C/A$ is a stable genus-one curve with geometrically integral fibers.  Deligne--Rapoport \cite[II, Proposition~2.7]{DR} therefore gives a unique generalized-elliptic-curve structure with identity $O$ and identifies the Abel--Jacobi map
\[
\mathcal C^{\rm sm}\longrightarrow\Pic^0_{\mathcal C/A},
\qquad Q\longmapsto[Q-O]
\]
with an isomorphism of commutative group schemes.

It remains to verify the divisor computation.  Since the special fibers of $Q,R,S$ are pairwise distinct and lie in the smooth locus, $D:=Q+R+S$ is a relative effective Cartier divisor of degree $3$ on $\mathcal C$.  If $Q,R,S$ are collinear, then $D$ is cut out by a relative line in $\PP^2_A$, so $\mathcal O_{\mathcal C}(D)\cong\mathcal O_{\mathcal C}(H)$, and subtracting $3O$ gives the displayed identity.

Conversely, suppose the displayed identity holds.  Then $\mathcal O_{\mathcal C}(D)\cong\mathcal O_{\mathcal C}(H)$.  The exact sequence
\[
0\longrightarrow \mathcal O_{\PP^2_A}(-2)
\longrightarrow \mathcal O_{\PP^2_A}(1)
\longrightarrow \mathcal O_{\mathcal C}(H)
\longrightarrow 0
\]
and the vanishing $H^0(\PP^2_A,\mathcal O(-2))=H^1(\PP^2_A,\mathcal O(-2))=0$ show that restriction induces an isomorphism
\[
        H^0(\PP^2_A,\mathcal O(1))
        \xrightarrow{\sim}
        H^0(\mathcal C,\mathcal O_{\mathcal C}(H)).
\]
The divisor $D$ therefore comes from a unique relative line in $\PP^2_A$, and hence the three sections are collinear.
\end{proof}

\begin{lemma}[Relative group-law recovery]
\label{lem:relative-group-law-recovery}
Let $\mathcal C$ be the relative cubic constructed above.  Then all sections $\widetilde P_i$ lie in $\mathcal C^{\rm sm}(A)$, and in $\Pic^0_{\mathcal C/A}(A)$ one has
\[
        [\widetilde P_i-\widetilde P_0]
        = i\cdot [\widetilde P_1-\widetilde P_0]
\]
for all $i\in\Z/n\Z$.  Moreover,
\[
        \mathcal O_{\mathcal C}(1)\cong
        \mathcal O_{\mathcal C}(3\widetilde P_0),
\]
and $[\widetilde P_1-\widetilde P_0]$ has exact order $n$.
\end{lemma}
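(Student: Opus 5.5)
The plan is to reduce everything to the special fiber and then invoke the abstract group-law computation of Lemma~\ref{lem:abstract-group-law}.

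\emph{Step 1: the special fiber.} The reduction $\mathcal C_k$ of the relative cubic is a cubic through all $P_i$ passing through the ten-point window. By uniqueness in Corollary~\ref{cor:unique-window-cubic}, it is the cubic $C$ of Part~\ref{part:field}. For $n\ge12$, Theorem~\ref{thm:cubic-irreducible-smooth} says $C$ is irreducible and smooth at every $P_i$; for $n=10,11$ the arguments of Subsections~\ref{subsec:n10} and~\ref{subsec:n11} give the same. Since $\operatorname{char}(k)\nmid n$, the cusp is excluded exactly as in Section~\ref{sec:field-valued-bijection-proof}, so $C$ is smooth or irreducible nodal.

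\emph{Step 2: the sections are smooth.} By Remark~\ref{rem:relative-cubic-flat}, $\mathcal C$ is flat over $A$. Lemma~\ref{lem:relative-propagation} puts each $\widetilde P_i$ on $\mathcal C$. The smooth locus of the flat family $\mathcal C/A$ is open and is detected on the special fiber. Since $P_i\in C^{\sm}$, each section $\widetilde P_i$ factors through $\mathcal C^{\sm}$, because $\Spec A$ has a single point.

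\emph{Step 3: group law and conclusion.} Apply Lemma~\ref{lem:relative-picard-chord} with $O=\widetilde P_0$. Set $G=\Pic^0_{\mathcal C/A}(A)$, $x_i=[\widetilde P_i-\widetilde P_0]$ and $\lambda=[H-3\widetilde P_0]$. For each zero-sum triple of distinct indices, the sections $\widetilde P_a,\widetilde P_b,\widetilde P_c$ are collinear: the nonbasis determinant vanishes on the normalized deformation, so a relative line through them exists. Their special fibers are pairwise distinct by Lemma~\ref{lem:marked-points-distinct}, so the lemma gives $x_a+x_b+x_c=\lambda$. Lemma~\ref{lem:abstract-group-law} then yields
\[
\lambda=0,\qquad x_i=i\,x_1\ \text{for all } i,\qquad n\,x_1=0.
\]
The relation $\lambda=0$ says $\mathcal O_{\mathcal C}(H)\otimes\mathcal O_{\mathcal C}(-3\widetilde P_0)$ is trivial in $\Pic^0_{\mathcal C/A}(A)$. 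We still need it trivial as a line bundle on $\mathcal C$. This follows because $\Pic_{\mathcal C/A}(A)$ agrees with $\Pic(\mathcal C)$ modulo $\Pic(A)=0$ ($A$ local), using the section $\widetilde P_0$; hence $\mathcal O_{\mathcal C}(1)\cong\mathcal O_{\mathcal C}(3\widetilde P_0)$.

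For exact order $n$: if $d\,x_1=0$ with $0<d<n$, then reducing to $k$ gives $d\,x_1=0$ there, which contradicts Theorem~\ref{thm:realization-standard}(iii) or its analogues for $n=10,11$. In a relative setting one should also check that $x_1$ generates a subgroup scheme of rank $n$. This holds because the $n$ sections $i\,x_1$ have pairwise distinct special fibers, so their union is a disjoint union of $n$ sections, hence étale of rank $n$.

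The main obstacle is Step 3's passage from relative collinearity to the Picard relation. Concretely, one must make sure that the relative line through $\widetilde P_a,\widetilde P_b$ cuts out exactly the divisor $\widetilde P_a+\widetilde P_b+\widetilde P_c$ on $\mathcal C$. This is a length count on the special fiber, where the line meets $C$ transversally in three distinct smooth points, together with flatness. The identification of $\Pic^0$ relations with line-bundle isomorphisms is the other delicate point, and I expect both to be handled by the Deligne--Rapoport structure invoked in Lemma~\ref{lem:relative-picard-chord}.
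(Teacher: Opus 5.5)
Your proposal is correct and follows essentially the same route as the paper. Both arguments obtain smoothness of the sections from openness of the smooth locus over the one-point base, derive the chord relations $x_a+x_b+x_c=\lambda$ from Lemma~\ref{lem:relative-picard-chord}, apply Lemma~\ref{lem:abstract-group-law}, and get exact order by reducing to the special fiber.

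The differences are minor. The paper makes the relative line explicit as the unimodular cross product $v_a\times v_b$, and you should state this, since it is what turns the vanishing determinant into an actual line. In the other direction, you are more careful than the paper in passing from $\lambda=0$ in $\Pic^0_{\mathcal C/A}(A)$ to an isomorphism of line bundles on $\mathcal C$, using the section $\widetilde P_0$ and $\Pic(A)=0$.
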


\begin{proof}
By Remark~\ref{rem:relative-cubic-flat}, $\mathcal C$ is a flat relative effective Cartier divisor on $\PP^2_A$, and its special fiber is the irreducible smooth-or-nodal cubic reconstructed in Part~\ref{part:field}.  All marked points of the special fiber lie in its smooth locus.  Since smoothness is open and each $\widetilde P_i$ is a section specializing to a smooth point, all $\widetilde P_i$ lie in $\mathcal C^{\rm sm}(A)$.

Let $H$ denote the class of a relative line section, and set
\[
        x_i:=[\widetilde P_i-\widetilde P_0]
        \in \Pic^0_{\mathcal C/A}(A),
        \qquad
        \lambda:=[H-3\widetilde P_0].
\]
Whenever $a,b,c$ are distinct and satisfy $a+b+c=0$ in $\Z/n\Z$, the representation condition gives
\[
\det(v_a,v_b,v_c)=0
\]
for unimodular homogeneous lifts $v_a,v_b,v_c\in A^3$.  The cross product
$w=v_a\times v_b$ annihilates all three lifts.  Moreover, $w$ is unimodular:
its coordinates are the $2\times2$ minors of $(v_a\mid v_b)$, and one of
these minors is a unit because the special fibers of $\widetilde P_a$ and
$\widetilde P_b$ are distinct.  Thus $w$ defines a relative line containing
the three sections.  By \cref{lem:relative-picard-chord},
\[
        x_a+x_b+x_c=\lambda.
\]
Applying Lemma~\ref{lem:abstract-group-law} in the abelian group $\Pic^0_{\mathcal C/A}(A)$ gives
\[
        \lambda=0
        \qquad\text{and}\qquad
        x_i=i\cdot x_1
\]
for all $i\in\Z/n\Z$.

The equality $\lambda=0$ is exactly
\[
        \mathcal O_{\mathcal C}(1)\cong
        \mathcal O_{\mathcal C}(3\widetilde P_0).
\]
Since the indices are periodic modulo $n$, we have $x_n=x_0=0$, so $n x_1=0$.  If $m x_1=0$ for some integer $0<m<n$, then $d x_1=0$ for $d=\gcd(m,n)$; here $d$ is a proper divisor of $n$.  Reducing modulo the maximal ideal of $A$ would give $d x_{1,k}=0$ on the
special fiber, contradicting the exact order $n$ statement from
Part~\ref{part:field}.  Thus $x_1$ has exact order $n$.  Since $n$ is invertible in $A$, the group scheme $\mathcal C^{\rm sm}[n]$
is finite \etale{} over $A$.  The homomorphism
\[
(\Z/n\Z)_A\longrightarrow\mathcal C^{\rm sm}[n],
\qquad 1\longmapsto\widetilde P_1
\]
is a closed immersion because its special fiber is injective.  Since the
special fiber of $\mathcal C$ is irreducible, the resulting cyclic subgroup
meets every irreducible component.  Hence $\widetilde P_1$ is a
$\Gamma_1(n)$-structure in the sense of
\cite[Definition~2.4.1]{Conrad-KM}.
\end{proof}

\subsection{The Artinian reconstruction theorem}

\begin{theorem}[Artinian reconstruction]
\label{thm:artinian-reconstruction}
Let $\bar y=(C,O,P)\in X_1(n)^\circ(k)$ and let $\bar p$ be the
corresponding normalized representation of $\cT_n$.  For every local
Artinian $\Z[1/n]$-algebra $A$ with residue field $k$, the construction
\[
        (\mathcal C,\widetilde O,
        \widetilde P)
        \longmapsto
        (i\widetilde P)_{i\in\Z/n\Z}
\]
followed by the projective normalization of \cref{lem:projective-normalization-artinian} gives a bijection between:
\begin{enumerate}[label=\textup{(\roman*)}]
\item $A$-valued deformations of $\bar y$ in $X_1(n)^\circ$;
\item normalized $A$-valued representation deformations of $\bar p$.
\end{enumerate}
\end{theorem}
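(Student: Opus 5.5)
The plan is to build maps in both directions between the two deformation sets and check that they are mutually inverse. Throughout, I use that $X_1(n)^\circ$ represents its moduli problem ($n\ge5$). So an $A$-valued deformation of $\bar y$ is the same as a triple $(\mathcal C,\widetilde O,\widetilde P)$: a generalized elliptic curve over $A$ with irreducible fibers, together with a $\Gamma_1(n)$-structure $\widetilde P$, a chosen identification of its reduction with $(C,O,P)$, taken up to isomorphism.

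\emph{Forward map.} Take such a triple. By cohomology and base change on the fibers, $\pi_*\mathcal O_{\mathcal C}(3\widetilde O)$ is free of rank $3$ and its formation commutes with base change. Hence $|3\widetilde O|$ gives a closed immersion $\mathcal C\hookrightarrow\PP^2_A$, well defined up to $\PGL_3(A)$, with $\mathcal O_{\mathcal C}(1)\cong\mathcal O_{\mathcal C}(3\widetilde O)$. The sections $\widetilde P_i:=i\widetilde P$ have pairwise distinct smooth special fibers $P_i$. By Lemma~\ref{lem:relative-picard-chord} with $[H-3\widetilde O]=0$, each zero-sum triple of distinct indices is collinear, so its determinant vanishes. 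Every basis determinant is nonzero on the special fiber by Proposition~\ref{prop:beta-welldefined}, hence a unit in $A$. Since $(P_0,P_1,P_2,P_3)$ is a frame, Lemma~\ref{lem:projective-normalization-artinian} supplies a unique element of $\PGL_3(A)$ that normalizes the configuration. This uniqueness absorbs both the $\PGL_3(A)$-ambiguity of the embedding and the choice of representative in the isomorphism class. The result is a normalized $A$-valued deformation of $\bar p$.

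\emph{Inverse map.} Start with a normalized deformation $(\widetilde P_i)$. Lemma~\ref{lem:relative-seed-pencil} produces the relative cubic $\mathcal C$ through $\widetilde P_{-4},\dots,\widetilde P_5$, and Lemma~\ref{lem:relative-propagation} shows that it contains every $\widetilde P_i$. Its special fiber is a cubic through the ten points $P_{-4},\dots,P_5$. By the uniqueness in Corollary~\ref{cor:unique-window-cubic} it equals the normalized image of $C$, so it is smooth or nodal. Lemma~\ref{lem:relative-group-law-recovery} then gives the remaining structure:
\begin{itemize}
\item $\mathcal C$ is a generalized elliptic curve with identity $\widetilde P_0$;
\item $\widetilde P_1$ is a $\Gamma_1(n)$-structure;
\item $\mathcal O_{\mathcal C}(1)\cong\mathcal O_{\mathcal C}(3\widetilde P_0)$, so the given plane embedding is the $|3\widetilde O|$ embedding;
\item $x_i=i x_1$ in $\Pic^0_{\mathcal C/A}(A)$.
\end{itemize}
This yields a deformation $(\mathcal C,\widetilde P_0,\widetilde P_1)$ of $\bar y$.

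\emph{Mutual inverseness.} Going inverse-then-forward, the Abel--Jacobi isomorphism $\mathcal C^{\sm}\cong\Pic^0_{\mathcal C/A}$ of Lemma~\ref{lem:relative-picard-chord} turns $x_i=ix_1$ into the equality of sections $i\widetilde P_1=\widetilde P_i$. The configuration is already normalized, so we recover $(\widetilde P_i)$. Going forward-then-inverse, the normalized image of the original $\mathcal C$ is a relative cubic through the ten window sections. Lemma~\ref{lem:relative-seed-pencil} says the cubic form through these sections is unique up to a unit, so this image is exactly the reconstructed cubic. The identity and level sections are $\widetilde P_0$ and $\widetilde P_1$, and the generalized elliptic structure with given identity is unique by \cite[II, Proposition~2.7]{DR}. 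So the original triple is recovered up to the projective isomorphism.

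The step I expect to be most delicate is the bookkeeping of deformations. One must check that the isomorphism produced by projective normalization restricts on the special fiber to the fixed identification with $\bar y$, and that it respects the group structure as well as the embedding. For the group structure I would rely on the uniqueness of the generalized elliptic structure once the identity section is fixed. For the compatibility with the identification I would rely on the uniqueness in Lemma~\ref{lem:projective-normalization-artinian}, applied on the closed fiber.
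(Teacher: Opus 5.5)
Your proposal is correct and follows the paper's proof essentially step for step. The forward map uses the $|3\widetilde O|$ embedding followed by the unique projective normalization, and the inverse uses the relative seed pencil, relative propagation and relative group-law recovery. Mutual inverseness comes from $\widetilde P_i=i\widetilde P_1$ in one direction and from uniqueness of the cubic through the seed window in the other. Your extra bookkeeping fills in details the paper leaves implicit: collinearity of zero-sum triples via the relative chord law, and compatibility with the identification of the special fiber. The latter is automatic anyway, since objects of $X_1(n)^\circ$ have no nontrivial automorphisms for $n\ge5$.
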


\begin{proof}
Starting from an $A$-valued deformation of the marked cubic, choose a trivialization of the free rank-three $A$-module $H^0(\mathcal C,\mathcal O_{\mathcal C}(3\widetilde O))$
to embed $\mathcal C$ in $\PP^2_A$.  The labelled torsion sections $i\widetilde P$ then form an $A$-valued representation of $\cT_n$.  A different trivialization changes the configuration by an element of $\PGL_3(A)$, and since the ordered quadruple $(\widetilde P_0,\widetilde P_1,\widetilde P_2,\widetilde P_3)$ remains a projective frame, the unique normalization of \cref{lem:projective-normalization-artinian} is independent of this choice.

Conversely, start with a normalized $A$-valued representation.  By \cref{lem:relative-seed-pencil} there is a unique relative cubic $\mathcal C$ through the seed window.  By \cref{lem:relative-propagation}, all marked sections lie on $\mathcal C$.  By \cref{lem:relative-group-law-recovery}, the section $\widetilde P_1$ has exact order $n$, the plane embedding is induced by $|3\widetilde P_0|$, and $\widetilde P_i=i\cdot\widetilde P_1$ in the generalized elliptic curve group law.  Thus $(\mathcal C,\widetilde P_0,\widetilde P_1)$ is an $A$-valued deformation of $\bar y$ in $X_1(n)^\circ$.

The two constructions are inverse.  In one direction this is immediate from the equality $\widetilde P_i=i\cdot\widetilde P_1$.  In the other direction, the relative cubic reconstructed from a normalized representation is unique through the seed window, so it must coincide with the original cubic.
\end{proof}

\begin{corollary}[Artinian-point bijectivity]
\label{cor:artinian-point-bijectivity}
For every local Artinian $\Z[1/n]$-algebra $A$, the modular morphism induces
a bijection
\[
        \beta(A):X_1(n)^\circ(A)\xrightarrow{\sim}\cR_n(A).
\]
\end{corollary}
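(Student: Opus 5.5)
The plan is to deduce the corollary from Theorem~\ref{thm:artinian-reconstruction} by fibering both sides over the residue field. Let $A$ be a local Artinian $\Z[1/n]$-algebra with residue field $k=A/\mathfrak m_A$; note $\operatorname{char}(k)\nmid n$ since $n\in A^\times$. Reduction modulo $\mathfrak m_A$ gives maps $X_1(n)^\circ(A)\to X_1(n)^\circ(k)$ and $\cR_n(A)\to\cR_n(k)$, and $\beta(A)$ is compatible with $\beta(k)$ under these reductions by functoriality of $\beta$. So $X_1(n)^\circ(A)$ is the disjoint union, over $\bar y\in X_1(n)^\circ(k)$, of the sets of $A$-valued deformations of $\bar y$. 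Likewise $\cR_n(A)$ is the disjoint union, over $\bar x\in\cR_n(k)$, of the deformations of $\bar x$. Since $\beta(k)$ is a bijection by Corollary~\ref{cor:geometric-bijective}, it suffices to show that, for each $\bar y$ with $\bar x=\beta(k)(\bar y)$, the map $\beta(A)$ restricts to a bijection from deformations of $\bar y$ to deformations of $\bar x$.

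For the fiberwise statement, I would compose two bijections. Proposition~\ref{prop:canonical-normalized-deformations} identifies deformations of $\bar x$ in $\cR_n(A)$ with normalized $A$-valued deformations of the normalized representation $\bar p$. Theorem~\ref{thm:artinian-reconstruction} identifies the latter with deformations of $\bar y$ in $X_1(n)^\circ(A)$.

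The one point requiring care is that the composite bijection is actually the restriction of $\beta(A)$. By construction, $\beta$ sends a marked curve to the multidegree-zero Plücker coordinates of $(i\widetilde P)_i$ under any plane embedding by $|3\widetilde O|$. These coordinates are unchanged by $\PGL_3(A)$ and by rescaling the lifts, so they equal the coordinates of the normalized configuration obtained via Lemma~\ref{lem:projective-normalization-artinian}. Proposition~\ref{prop:normalized-frame-slice} then says these coordinates are precisely the point of $N_n(A)\cong\cR_n(A)$ corresponding to that normalized configuration. This compatibility check is the main (if mild) obstacle. It amounts to verifying that the Plücker-coordinate description of $\beta$ and the frame-slice identification agree on $A$-points, not just on field points, and both are functorial in $A$.

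Finally, the map $(\mathcal C,\widetilde O,\widetilde P)\mapsto(\widetilde P_i)$ in Theorem~\ref{thm:artinian-reconstruction} is defined on isomorphism classes, so $A$-points of $X_1(n)^\circ$ match deformations in the theorem's sense. This uses representability of the $\Gamma_1(n)$ moduli problem for $n\ge5$, which means there are no nontrivial automorphisms to track. Taking the union over all $\bar y$ then gives the bijection $\beta(A)$.
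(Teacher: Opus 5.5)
Your proposal is correct and is essentially the paper's own argument: you fiber both point sets over the residue-field bijection $\beta(k)$ of \cref{cor:geometric-bijective}, then identify the fibers over $\bar x=\beta(k)(\bar y)$ via \cref{prop:canonical-normalized-deformations} and \cref{thm:artinian-reconstruction}. Your explicit check that the composite fiberwise bijection is the restriction of $\beta(A)$ (invariance of the multidegree-zero Pl\"ucker coordinates under $\PGL_3(A)$ and under rescaling of lifts) spells out what the paper compresses into the phrase ``the identification is induced by $\beta$.''
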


\begin{proof}
Let $k=A/\mathfrak m_A$.  Reduction modulo $\mathfrak m_A$ gives maps
\[
        X_1(n)^\circ(A)\longrightarrow X_1(n)^\circ(k),
        \qquad
        \cR_n(A)\longrightarrow\cR_n(k)
\]
which are compatible with $\beta$.  The map
$\beta(k):X_1(n)^\circ(k)\longrightarrow\cR_n(k)$
is bijective by Corollary~\ref{cor:geometric-bijective}.

Fix $\bar y\in X_1(n)^\circ(k)$, let
$\bar x=\beta(\bar y)\in\cR_n(k)$, and let $\bar p$ be the corresponding
normalized realization.  By
Proposition~\ref{prop:canonical-normalized-deformations}, the fiber of
$\cR_n(A)\longrightarrow\cR_n(k)$
over $\bar x$ is naturally identified with the set of normalized
$A$-valued deformations of $\bar p$.  By
Theorem~\ref{thm:artinian-reconstruction}, this set is naturally identified
with the fiber of
$X_1(n)^\circ(A)\longrightarrow X_1(n)^\circ(k)$ 
over $\bar y$, and the identification is induced by $\beta$.

Thus $\beta(A)$ is bijective on the fibers over every point of the 
field-valued correspondence.  Since $\beta(k)$ is bijective, $\beta(A)$ is
bijective.
\end{proof}

\subsection{The global isomorphism}
\label{sec:global-isomorphism}

\begin{proof}[Proof of Theorem~\ref{thm:main}]
Both $X_1(n)^\circ$ and $\cR_n$ are of finite type over $\Z[1/n]$.
Apply Proposition~\ref{prop:artinian-point-criterion} to
$\beta:X_1(n)^\circ\longrightarrow\cR_n$
using Corollary~\ref{cor:artinian-point-bijectivity}.
\end{proof}

\section{Complement on modular curves and representations of elliptic matroids}
\label{sec:complement}

We begin with the following well-known description of the cusps of $X_1(n)$ which are ``missing''  from $X_1(n)^\circ$.

\begin{proposition}
\label{prop:cusps}
The complement $X_1(n)\setminus X_1(n)^\circ$ consists of the cusps represented
by reducible N\'eron polygons.  Over $\overline\Q$, the geometric points of
$X_1(n)^\circ\setminus Y_1(n)$ are represented by a N\'eron $1$-gon together with a primitive $n$-th root of
unity in its smooth locus, modulo inversion.  They are therefore indexed by
$(\Z/n\Z)^\times/\{\pm1\}$.
The Galois action is transitive, and the corresponding closed point on the
generic fiber has residue field $\Q(\zeta_n+\zeta_n^{-1})$.
In particular, none of the cusps in $X_1(n)^\circ\setminus Y_1(n)$ is
$\Q$-rational when $n\ge10$.
\end{proposition}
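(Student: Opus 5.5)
The plan is to work directly from the moduli description of cusps of $X_1(n)$ over $\Z[1/n]$, following Deligne--Rapoport \cite{DR} and Conrad \cite{Conrad-KM}. The argument has four steps, taken in order.

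\emph{Step 1: the complement.} By definition, $X_1(n)^\circ$ is the locus where the geometric fibers of the universal generalized elliptic curve are irreducible. The geometric fibers over $X_1(n)$ are smooth elliptic curves or N\'eron $d$-gons. Among these, the irreducible ones are exactly the smooth curves and the $1$-gons. Hence the complement consists of cusps whose fiber is a $d$-gon with $d\ge2$, that is, a reducible N\'eron polygon.

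\emph{Step 2: the irreducible cusps over $\overline\Q$.} Such a cusp is a N\'eron $1$-gon $C$ whose smooth locus is $\G_m$, together with a $\Gamma_1(n)$-structure $P\in\G_m(\overline\Q)$. Since $n$ is invertible, this is a point of exact order $n$, i.e.\ a primitive $n$-th root of unity $\zeta$. The ampleness condition (the subgroup meets every component) is automatic because there is only one component. The automorphism group of a $1$-gon as a generalized elliptic curve is $\{\pm1\}$, acting by inversion $x\mapsto x^{-1}$ on $\G_m$. Therefore the isomorphism classes are $\mu_n^{\times}/\{\pm1\}$, which is in bijection with $(\Z/n\Z)^\times/\{\pm1\}$ once a primitive root is fixed. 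I would cite \cite[II]{DR} for the assertion that these are all the points and that the $1$-gon has no further automorphisms.

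\emph{Step 3: Galois action and residue field.} The $1$-gon with its identity section is defined over $\Q$, so $\sigma\in\mathrm{Gal}(\overline\Q/\Q)$ acts by $(C,\zeta)\mapsto(C,\sigma(\zeta))$. Via the cyclotomic character this action is transitive on $(\Z/n\Z)^\times/\{\pm1\}$. The stabilizer of the class of $\zeta$ is the set of $\sigma$ with $\sigma(\zeta)=\zeta^{\pm1}$, which is exactly the subgroup fixing $\Q(\zeta_n+\zeta_n^{-1})$. The set of cusps therefore forms one Galois orbit, i.e.\ one closed point of the generic fiber, with residue field $\Q(\zeta_n)^+$.

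\emph{Step 4: no rational cusps.} The field $\Q(\zeta_n)^+$ has degree $\varphi(n)/2$. For $n\ge10$ we have $\varphi(n)\ge4$, so this degree exceeds $1$ and none of these cusps is $\Q$-rational. The only $n$ with $\varphi(n)\le2$ are $1,2,3,4,6$.

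\emph{Main obstacle.} The delicate point is Step 3: I need to make sure that the coarse/fine moduli identification really gives the field-of-definition statement. Since $n\ge5$, the moduli problem is representable, so a $\overline\Q$-point fixed by $\sigma$ corresponds to an isomorphism class fixed by $\sigma$. The residue field of the closed point is then the fixed field of the stabilizer, and this only requires knowing that $\mathrm{Aut}$ of the $1$-gon is exactly inversion, which I would quote from \cite{DR} or \cite{Conrad-KM}.
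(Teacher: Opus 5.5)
Your proposal is correct and follows essentially the same route as the paper: you identify the irreducible cusps as N\'eron $1$-gons with a primitive $n$-th root of unity modulo the inversion automorphism, then use surjectivity of the cyclotomic character to get a single Galois orbit with stabilizer $\{\pm1\}$, residue field $\Q(\zeta_n+\zeta_n^{-1})$, and degree $\varphi(n)/2>1$. Your extra remarks make explicit what the paper's proof leaves implicit, namely that ampleness is automatic for a $1$-gon and that representability for $n\ge5$ lets you pass from Galois-stable isomorphism classes to the residue field.
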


\begin{proof}
The cusps of $X_1(n)$ classify N\'eron polygons equipped with a point of exact
order $n$ in the smooth locus.  The fiber is irreducible exactly for a
N\'eron $1$-gon.  Its smooth locus is $\G_m$, so a point of exact order $n$
is a primitive $n$-th root of unity.  The automorphisms of the N\'eron
$1$-gon as a generalized elliptic curve restrict on $\G_m$ to the identity
and inversion.  Hence two primitive roots determine isomorphic marked
$1$-gons exactly when they differ by inversion.

The absolute Galois group acts on primitive roots through
$(\Z/n\Z)^\times$, transitively modulo $\{\pm1\}$.  The stabilizer of the
class of $\zeta_n$ is $\{\pm1\}$, whose fixed field is the maximal real
cyclotomic subfield $\Q(\zeta_n+\zeta_n^{-1})$.  Its degree is
$\varphi(n)/2>1$ for $n\ge10$.
\end{proof}

\begin{corollary}[Mazur's theorem and elliptic matroids]
\label{cor:mazur-matroid}
For every prime $p\ge11$, the elliptic matroid $\cT_p$ is not representable
over $\Q$.  Moreover, this assertion for all primes $p\ge11$ is equivalent to
Mazur's theorem that $Y_1(p)(\Q) = \emptyset$.
\end{corollary}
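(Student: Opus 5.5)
The plan is to deduce both assertions from the field-valued bijection of Theorem~\ref{thm:field-valued-bijection} with $k=\Q$, combined with Proposition~\ref{prop:cusps}. Fix a prime $p\ge 11$. Since $\operatorname{char}(\Q)=0\nmid p$, Theorem~\ref{thm:field-valued-bijection} gives a bijection
\[
\beta_\Q: X_1(p)^\circ(\Q)\xrightarrow{\sim} R_{\cT_p}(\Q).
\]
In particular, $\cT_p$ is representable over $\Q$ if and only if $X_1(p)^\circ(\Q)\neq\emptyset$. (Equivalently, by Corollary~\ref{cor:geometric-bijective}, this is the same as $\cR_p(\Q)\ne\emptyset$.)

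Next I decompose $X_1(p)^\circ(\Q)$ as $Y_1(p)(\Q)\sqcup (X_1(p)^\circ\setminus Y_1(p))(\Q)$. By Proposition~\ref{prop:cusps}, the cusps in $X_1(p)^\circ\setminus Y_1(p)$ form a single closed point of the generic fiber with residue field $\Q(\zeta_p+\zeta_p^{-1})$. This field has degree $(p-1)/2>1$ for $p\ge 11$, so none of these cusps is $\Q$-rational. Hence $X_1(p)^\circ(\Q)=Y_1(p)(\Q)$, and therefore
\[
\cT_p \text{ representable over } \Q \iff Y_1(p)(\Q)\neq\emptyset.
\]
This proves the claimed equivalence prime by prime, and a fortiori for all $p\ge 11$ simultaneously.

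For the unconditional statement, I would simply invoke Mazur's theorem. The rational torsion of an elliptic curve over $\Q$ has no point of prime order $p\ge 11$, which says exactly that $Y_1(p)(\Q)=\emptyset$. To make this translation precise, I would use the moduli interpretation: for a field $k$ of characteristic prime to $n$ and $n\ge 5$, $Y_1(n)(k)$ is the set of isomorphism classes of pairs $(E,P)$ with $P\in E(k)$ of exact order $n$. This follows from the representability of the $\Gamma_1(n)$ problem recalled in Section~\ref{sec:modular-morphism}, which gives $\Q$-points of the fine moduli scheme as actual $\Q$-rational data.

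The only delicate point I anticipate is this last identification of $\Q$-points of the coarse curve with rational pairs. It is where fine representability, valid for $n\ge 5$, is essential. Since the paper already asserts that $X_1(n)^\circ(k)$ is naturally the set of projective isomorphism classes of Part~\ref{part:field}, I would cite that remark rather than reprove descent.
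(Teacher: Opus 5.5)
Your proposal is correct and follows essentially the same route as the paper: apply Theorem~\ref{thm:field-valued-bijection} with $k=\Q$, use Proposition~\ref{prop:cusps} to get $X_1(p)^\circ(\Q)=Y_1(p)(\Q)$, and then invoke Mazur's theorem (or, for the converse, non-representability) through the moduli interpretation of $Y_1(p)(\Q)$. Your remark that the equivalence holds prime by prime is also what the paper's argument gives.
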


\begin{proof}
By Theorem~\ref{thm:field-valued-bijection}, a $\Q$-representation of
$\cT_p$ is equivalent to a point of $X_1(p)^\circ(\Q)$.  By
Proposition~\ref{prop:cusps}, the boundary
$X_1(p)^\circ\setminus Y_1(p)$ has no $\Q$-rational point.  Thus
\[
        X_1(p)^\circ(\Q)=Y_1(p)(\Q),
\]
and the latter set parametrizes elliptic curves over $\Q$ with a rational
point of order $p$.  Mazur's theorem says that no such point exists for
$p\ge11$ \cite{Mazur}.  Conversely, nonrepresentability of $\cT_p$ over
$\Q$, together with the field-valued correspondence and the boundary
calculation, implies the same prime-order assertion.
\end{proof}

\appendix

\section{The canonical realization band scheme of a matroid}
\label{sec:bands}

We construct the realization space $\bR_M$ of a matroid $M$ as a band scheme in the sense of \cite{BJ-bands}.
We then show that the scheme over $\Z$ associated to $\bR_M$ coincides with the realization scheme $\cR_M$ 
defined in the body of the paper.

We recall only the definitions and facts needed below; see \cite{BJ-bands,BL-Lafforgue} for
the general theory of bands and band schemes.

\subsection{Bands, pastures, and associated rings}

A \emph{pointed monoid} is a commutative multiplicative monoid $B$ with
identity $1$ and an absorbing element $0$.  Its ambient semiring is
\[
        B^+:=\N[B]/\langle 0\rangle,
\]
the semiring of finite formal sums of nonzero elements of $B$.

\begin{definition}\label{def:band}
A \emph{band} is a pointed monoid $B$ together with an ideal
$N_B\subset B^+$, called its \emph{null set}, such that for every $a\in B$
there is a unique element $-a\in B$ with $a+(-a)\in N_B$.  A morphism of
bands is a multiplicative map preserving $0$ and $1$ and carrying null
relations to null relations.
\end{definition}

When there is no danger of confusion, we write
\[
        a_1+\cdots+a_m = 0
\]
instead of $a_1+\cdots+a_m\in N_B$.  

Every commutative ring $R$ is likewise a band with its usual multiplicative
monoid and with
\[
        N_R=\left\{\sum a_i:\sum a_i=0\text{ in }R\right\}.
\]
This gives a fully faithful embedding of rings into bands. 

The field-like objects called pastures (of which fields, partial fields, and hyperfields are examples)
used in \cite{BL-foundations,BL-Lafforgue} and related works can also be thought of as bands.
More formally, there is a fully faithful inclusion
\[
        \Past\lhook\joinrel\longrightarrow\Band
\]
from pastures to bands.
We will identify both pastures and rings with their images in $\Band$.

\begin{definition}[Associated ring]\label{def:associated-ring-band}
Let $B$ be a band.  Its associated ordinary ring is
\[
 \rho_{\Z}(B):=
 \Z[B]\Big/\Big\langle \sum [a_i]\ \Big|\ \sum a_i\in N_B\Big\rangle,
\]
where $\Z[B]$ is the monoid ring of the pointed monoid $B$, with $[0]=0$.
\end{definition}

\begin{lemma}\label{lem:associated-ring-universal}
For every commutative ring $R$, regarded as a band, composition with the
canonical map $B\to\rho_{\Z}(B)$ gives a natural bijection
\[
 \Hom_{\mathrm{Ring}}(\rho_{\Z}(B),R)
 \cong
 \Hom_{\Band}(B,R).
\]
\end{lemma}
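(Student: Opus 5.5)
The plan is to construct the two maps directly and check that they are mutually inverse and natural, as one does for an ordinary group-ring adjunction. Let $\iota:B\to\rho_{\Z}(B)$ be the canonical map $a\mapsto[a]$. First I will check that $\iota$ is a band morphism, with $\rho_{\Z}(B)$ regarded as a band. Multiplicativity and the conditions $\iota(0)=0$, $\iota(1)=1$ hold because $\Z[B]$ is the monoid ring with $[0]=0$. Null relations are preserved because each generator $\sum[a_i]$ with $\sum a_i\in N_B$ has been killed in the quotient. So $\iota(a_1)+\cdots+\iota(a_m)=0$ in $\rho_{\Z}(B)$, which is exactly membership in $N_{\rho_{\Z}(B)}$. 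It follows that for a ring homomorphism $f:\rho_{\Z}(B)\to R$ the composite $f\circ\iota$ is a band morphism. The reason is that ring homomorphisms are band morphisms, since they preserve multiplication, $0$, $1$, and vanishing sums, and composites of band morphisms are band morphisms.

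For the inverse, start from a band morphism $g:B\to R$. Because $g$ is multiplicative and preserves $0$ and $1$, the universal property of the (pointed) monoid ring gives a unique ring homomorphism $\tilde g:\Z[B]\to R$ with $\tilde g([a])=g(a)$ and $\tilde g([0])=0$. The single substantive point is that $\tilde g$ kills the defining ideal. For each generator with $\sum a_i\in N_B$, the morphism $g$ sends the null relation to a null relation of $R$, meaning $\sum g(a_i)\in N_R$. By the definition of $N_R$ this says $\sum g(a_i)=0$ in $R$, so $\tilde g(\sum[a_i])=0$. Hence $\tilde g$ descends to $\bar g:\rho_{\Z}(B)\to R$ with $\bar g\circ\iota=g$.

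Next I check the bijection. One composite sends $g$ to $\bar g\circ\iota=g$. The other sends $f$ to $\overline{f\circ\iota}$, and this equals $f$: both are ring homomorphisms that agree on the classes $[a]$, and these generate $\rho_{\Z}(B)$ additively, so the two maps coincide. Naturality in $R$ holds because the bijection is simply precomposition with $\iota$. Naturality in $B$, if wanted, holds because a band morphism $h:B\to B'$ induces $\rho_{\Z}(h)$ with $\rho_{\Z}(h)\circ\iota_B=\iota_{B'}\circ h$. That induced map exists by the same descent argument, since $h$ carries null relations to null relations.

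The only real obstacle is the compatibility between null sets, which here is just a matter of bookkeeping. I would state explicitly that $N_R$ consists exactly of the formal sums that vanish in $R$. I would also state that a band morphism carries $\sum a_i\in N_B$ to $\sum g(a_i)\in N_R$, where terms with $g(a_i)=0$ are dropped in $R^+$; dropping them does not affect the vanishing of the sum in $R$.
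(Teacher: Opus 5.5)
Your proposal is correct and follows essentially the same route as the paper. You extend a band morphism $g$ to $\Z[B]$ by the monoid-ring universal property, note that it kills the defining relations because null relations in $B$ go to vanishing sums in $R$, and invert by precomposing with $B\to\rho_{\Z}(B)$; you also spell out what the paper calls ``easily verified'', namely that the canonical map is itself a band morphism and that the classes $[a]$ generate $\rho_{\Z}(B)$ additively.
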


\begin{proof}
A band morphism $B\to R$ extends uniquely to a ring homomorphism
$\Z[B]\to R$.  It factors through $\rho_{\Z}(B)$ precisely because every
null relation in $B$ is sent to an ordinary additive relation in $R$.  Moreover, this
construction is easily verified to be reversible.
\end{proof}

We write $\Spec B$ for the affine band scheme representing the functor
$P\mapsto\Hom_{\Band}(B,P)$.  Its base extension to ordinary schemes is
$\Spec\rho_{\Z}(B)$.  These affine constructions are all that we need from
the general theory of band schemes.

\subsection{The universal band}

Let $E$ be a finite totally ordered set, and let $M$ be a matroid of rank $r \geq 2$ on $E$.
(The evident analogous definitions in ranks $0$ and $1$, where there are no Pl\"ucker relations, will be omitted here for simplicity; 
these are degenerate and uninteresting cases from the point of view of realization spaces.)
Write $\mathcal B(M)$ for its set of bases.  
For an $(r-2)$-subset $I$ and distinct elements $i,j,k,l\notin I$, we use the usual
alternating convention for symbols $p_{Iij}$, and set $p_J=0$ when $J$ is not
a basis of $M$.

\begin{definition}
\label{def:fixed-support-plucker-band}
The \emph{universal band} $U_M$ is the band generated by units
$p_B$, one for each $B\in\mathcal B(M)$, subject to the three-term Pl\"ucker
relations
\[
 p_{Iij}p_{Ikl}-p_{Iik}p_{Ijl}+p_{Iil}p_{Ijk} = 0.
\]
\end{definition}

Changing the auxiliary total order on $E$ only changes alternating symbols
by the corresponding signs and yields a canonical isomorphism.  Thus $U_M$
and the constructions derived from it are independent of this order, up to
canonical isomorphism.

\begin{lemma}\label{lem:UM-represents-fixed-support}
For every pasture $P$, morphisms
\[
U_M\longrightarrow P
\]
are naturally in bijection with weak Grassmann--Pl\"ucker functions
over $P$ having support $\mathcal B(M)$.
\end{lemma}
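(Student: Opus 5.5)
The plan is to compare universal properties directly. Recall the definition of a weak Grassmann--Pl\"ucker function over a pasture $P$ with support $\mathcal B(M)$. It is a function $\varphi:\binom{E}{r}\to P$, taken up to the usual alternating convention, which satisfies three conditions. First, $\varphi(J)\neq 0$ exactly when $J\in\mathcal B(M)$. Second, it obeys the alternating sign rule. Third, it satisfies the three-term Pl\"ucker relations
\[
\varphi(Iij)\varphi(Ikl)-\varphi(Iik)\varphi(Ijl)+\varphi(Iil)\varphi(Ijk)\in N_P
\]
for all $(r-2)$-subsets $I$ and distinct $i,j,k,l\notin I$. Here terms with a non-basis index are set to $0$. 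For the band $U_M$ I will use its presentation: it is freely generated by the units $p_B$, subject to the displayed null relations. Its defining universal property is that band morphisms $U_M\to P$ are the same as assignments $p_B\mapsto x_B\in P^\times$ under which every three-term relation is sent into $N_P$.

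In the first direction, take a morphism $f:U_M\to P$. Set $\varphi(B)=f(p_B)$ for $B\in\mathcal B(M)$ and $\varphi(J)=0$ otherwise, extended by the alternating convention. The image $f(p_B)$ is a unit because $p_B$ is a unit and $f$ preserves $1$. Hence the support of $\varphi$ is exactly $\mathcal B(M)$. Since $f$ carries null relations to null relations, each three-term Pl\"ucker relation holds for $\varphi$. One small point needs care: the sign $-1$ in the relation must go to $-1\in P$. This holds because $-1$ is characterized by uniqueness of additive inverses in bands, and morphisms preserve the relation $1+(-1)\in N$. For the same reason the alternating signs in $p_{Iij}$ are respected.

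In the second direction, take a weak Grassmann--Pl\"ucker function $\varphi$ with support $\mathcal B(M)$. Send each generator $p_B$ to $\varphi(B)\in P^\times$; this defines a multiplicative map from the free pointed monoid on the units $p_B^{\pm1}$. The three-term relations are imposed in $U_M$ and hold for $\varphi$ in $N_P$. By the universal property of a band presented by generators and null relations, the assignment therefore extends uniquely to a band morphism. In that extension, non-basis symbols correspond to $0$, consistent with $\varphi(J)=0$. Uniqueness of the extension shows the two constructions are mutually inverse. Naturality in $P$ is immediate, since both sides are computed by postcomposition.

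The main obstacle is justifying that the presented band $U_M$ really has this universal property. This requires checking three things: null sets are generated as ideals in $B^+$; the relations generated by the three-term ones are closed under multiplication by monomials; and the resulting object is still a band, meaning unique additive inverses survive rather than collapsing. I would cite the general construction of free bands with relations from \cite{BJ-bands}, which guarantees existence and the expected universal property for maps into any band. Pastures embed fully faithfully into bands, so morphisms into a pasture $P$ computed in $\Band$ agree with those computed among pastures, and the bijection follows.
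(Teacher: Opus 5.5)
Your proposal is correct and takes the same approach as the paper. The paper's proof simply observes that a morphism out of the presented band $U_M$ assigns a unit to each basis symbol and $0$ to each non-basis symbol, and carries the three-term Pl\"ucker relations into $N_P$; these are exactly the axioms of a weak Grassmann--Pl\"ucker function with support $\mathcal B(M)$. Your additional checks (that $-1\mapsto -1$ by uniqueness of additive inverses, and that a band presented by generators and null relations has the expected universal property, via \cite{BJ-bands}) fill in details the paper leaves implicit.
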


\begin{proof}
A morphism assigns a unit of $P$ to every basis symbol, assigns zero
to every nonbasis symbol, and carries the defining three-term
Pl\"ucker relations to null relations in $P$. These are precisely the
axioms for a weak Grassmann--Pl\"ucker function with support
$\mathcal B(M)$.
\end{proof}

\subsection{The degree-zero band and its associated scheme}

Let
\[
        G_M:=\Z\oplus\Z^E
\]
and grade $U_M$ by
\[
        \mdeg(p_B)=\left(1,\sum_{e\in B}e_e\right).
\]
The first coordinate records global scaling and the remaining coordinates
record (the band-theoretic analogue of) column rescaling.  

\begin{definition}
\label{def:degree-zero-band}
The \emph{realization band} of $M$ is the multidegree-zero sub-band
\[
        R_M :=(U_M)_0.
\]
\end{definition}

We retain the notation
\[
C_M,\qquad J_M,\qquad S_M=C_M/J_M,\qquad
A_M=(S_M)_0
\]
from Section~\ref{sec:schemes}.

\begin{lemma}\label{lem:associated-ring-fixed-support}
There is a natural isomorphism
\[
        \rho_{\Z}(U_M)\cong S_M.
\]
\end{lemma}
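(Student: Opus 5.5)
The plan is to compare universal properties: I will show that $\rho_{\Z}(U_M)$ and $S_M$ corepresent the same functor on commutative rings, and then invoke Yoneda. By Lemma~\ref{lem:associated-ring-universal}, for every ring $R$ there is a natural bijection $\Hom_{\mathrm{Ring}}(\rho_{\Z}(U_M),R)\cong\Hom_{\Band}(U_M,R)$. It therefore suffices to identify $\Hom_{\Band}(U_M,R)$ naturally with $\Hom_{\mathrm{Ring}}(S_M,R)$.

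The first step is to unwind the presentation of $U_M$. A band morphism $U_M\to R$ assigns to each basis $B$ a unit $x_B\in R^\times$. The generators $p_B$ are units in $U_M$, so their images must be units. The assignment has to send every three-term Pl\"ucker relation to an additive identity in $R$, with $p_J\mapsto 0$ for nonbases $J$. In the other direction, ring maps $S_M=C_M/J_M\to R$ are assignments $p_B\mapsto x_B\in R^\times$, since $C_M$ is a Laurent polynomial ring. These assignments must kill $\varphi_M(I_{r,E})$, the full Pl\"ucker ideal with the nonbasis variables set to zero. The two functors therefore agree provided the following holds: for units $x_B$, the specialized three-term relations imply the specialized full Pl\"ucker relations.

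That implication is the main obstacle. In general the three-term relations do not generate the Pl\"ucker ideal, and over arbitrary rings the weak and strong Grassmann--Pl\"ucker conditions differ. The saving feature here is that every basis coordinate is invertible. Modulo the three-term relations, I will show that every long Pl\"ucker relation lies in the ideal they generate inside the Laurent ring $C_M$. The argument uses Lemma~\ref{lem:graded-ring} and a frame slice in the style of Proposition~\ref{prop:normalized-frame-slice}. Fix a basis $B_0$. Using the invertible $p_{B_0}$, the three-term relations with $B_0$ in the index express each $p_J$ as an $r\times r$ minor of the normalized matrix $(I_r\mid X)$. Here the entries of $X$ are $\pm p_{B_0-b+e}/p_{B_0}$. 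This is the standard argument that, given one invertible coordinate, the three-term relations force the vector to be the Pl\"ucker vector of a matrix. One proves it by induction on $|J\setminus B_0|$, using at each step a three-term relation whose leading coefficient is a unit. Once $p_J$ equals a matrix minor, all Pl\"ucker relations hold automatically.

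A subtlety arises because the inductive step needs a unit coefficient at each stage, and some intermediate coordinates may be nonbasis and hence zero. I will handle this by running the induction in $C_M/J'_M$, where $J'_M$ is the ideal generated by the three-term relations. There I will check that the needed pivot can always be chosen among bases, via basis exchange. If that fails, I would fall back on the Baker--Lorscheid result that for a matroid the foundation, and the three-term relations with invertible basis coordinates, already determine strong Pl\"ucker vectors, since weak and strong agree when the support is a matroid. Granting this, $J_M=J'_M$ and both functors coincide naturally in $R$. Yoneda then gives $\rho_{\Z}(U_M)\cong S_M$. The isomorphism sends $[p_B]\mapsto p_B$, so it is visibly compatible with the multigradings.
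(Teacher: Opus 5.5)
Your reduction is the same as the paper's. Via \cref{lem:associated-ring-universal} both sides become functors on rings. The only issue is whether a unit-valued assignment on $\mathcal B(M)$ satisfying the three-term relations satisfies every Pl\"ucker relation, and Yoneda finishes. The paper settles this step by citation: the values lie in the partial field $R^\times\cup\{0\}$, partial fields are perfect, so weak and strong Grassmann--Pl\"ucker functions agree (Baker--Bowler).

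Your direct proof of this step fails as written. The induction on $d=|J\setminus B_0|$ must also cover nonbases, where you have to show the minor $\Delta_J$ vanishes. At each step it needs $e,f\in J\setminus B_0$ and $a,b\in B_0\setminus J$ with $J-e-f+a+b$ a basis. Such a pivot exists in two cases:
\begin{itemize}
\item $J$ is a basis: exchange twice.
\item $J$ is a nonbasis of rank $r-1$: take $e$ in the unique circuit of $J$, any other $f\in J\setminus B_0$, and augment the independent set $J-e-f$ from $B_0$.
\end{itemize}
But if $\rk(J)\le r-3$, every set $J-e-f+a+b$ has rank at most $r-1$. In fact, in every three-term relation containing $p_J$ the partner coordinate is then a nonbasis. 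So the relations give no handle on $\Delta_J$, and your claim that ``the needed pivot can always be chosen among bases'' is false.

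The repair is a Laplace expansion along a column $f\in J\setminus B_0$:
\[
\Delta_J=\sum_{a\in B_0\setminus J}\pm X_{af}\,\Delta_{J-f+a}.
\]
When $\rk(J)\le r-2$, each $J-f+a$ is a nonbasis with smaller $d$, so every term vanishes by induction. With this case split, your argument becomes a complete, self-contained proof of the step over an arbitrary ring, which the paper only cites.

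Your fallback is also misstated. Weak Grassmann--Pl\"ucker functions have matroid support by definition, yet weak and strong differ over general tracts; that is why Baker--Bowler need a perfectness hypothesis. So ``weak and strong agree when the support is a matroid'' is not a theorem. The correct statement, and the one the paper uses, is this: $R^\times\cup\{0\}$, with null set inherited from $R$, is a partial field, and partial fields are perfect. That result is due to Baker--Bowler, not Baker--Lorscheid.
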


\begin{proof}
It is enough to compare the functors represented by the two rings.  Let $R$ be
a commutative ring.  A band morphism $U_M\to R$ assigns a unit of $R$ to each
basis coordinate and zero to each nonbasis coordinate, subject to the three-term
Pl\"ucker relations.  Its values therefore lie in the partial field
$R^\bullet=R^\times\cup\{0\}$ associated with $R$.  Partial fields are
perfect, so weak and strong Grassmann--Pl\"ucker functions over
$R^\bullet$ coincide \cite{BB}.  Consequently, band morphisms $U_M\to R$
are naturally the same as strong Grassmann--Pl\"ucker functions over $R$
with support $M$, which in turn are naturally the same as ring homomorphisms
$S_M\to R$.  The universal property of $\rho_{\Z}(U_M)$ and Yoneda's lemma
give the asserted isomorphism.
\end{proof}

\begin{lemma}\label{lem:associated-ring-degree-zero}
There is a natural isomorphism
\[
        \rho_{\Z}(R_M)\cong A_M.
\]
\end{lemma}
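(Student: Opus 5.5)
The plan is to deduce the statement from Lemma~\ref{lem:associated-ring-fixed-support}, $\rho_{\Z}(U_M)\cong S_M$, by checking that taking the multidegree-zero part commutes with forming the associated ring. First I will set up the grading on $\rho_{\Z}(U_M)$. The band $U_M$ is $G_M$-graded: every nonzero element is $\pm$ a Laurent monomial in the $p_B$, and the defining three-term Plücker relations are multihomogeneous. So the monoid ring $\Z[U_M]$ inherits a $G_M$-grading. The ideal generated by the null relations is generated by homogeneous elements, because the null set of $U_M$ is generated (as an ideal of $U_M^+$) by the homogeneous three-term relations together with the relations $a+(-a)$. Hence $\rho_{\Z}(U_M)$ is $G_M$-graded, and the isomorphism with $S_M$ is graded, since it sends $p_B\mapsto p_B$.

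Second, I will compare the two candidate rings through the natural map $\rho_{\Z}(R_M)\to\rho_{\Z}(U_M)_0\cong(S_M)_0=A_M$, which is induced by the inclusion $R_M\subset U_M$. Surjectivity is easy. The degree-zero part of $\Z[U_M]$ is spanned by $[a]$ with $a\in U_M$ of degree $0$, that is, by $\Z[R_M]$, and it surjects onto $(\rho_{\Z}(U_M))_0$ because the grading is preserved by the quotient.

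Injectivity is the main obstacle. Here I must show that every degree-zero element of the relation ideal of $\Z[U_M]$ already lies in the relation ideal of $\Z[R_M]$. Such an element has the form $\sum_j f_j\,r_j$ with $r_j$ homogeneous null relations. After projecting onto degree $0$, I may assume each term $f_j r_j$ is homogeneous of degree $0$, and I may take each $f_j$ to be a single monomial $[u_j]$ with $u_j$ a unit of degree $-\deg r_j$.

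I then need $u_j r_j$ to be a null relation of $R_M$. This requires the precise meaning of the sub-band $(U_M)_0$: its null set should be $N_{U_M}\cap R_M^+$. Under that definition $u_j r_j\in N_{U_M}$ has all its terms in $R_M$, so the claim holds.

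If instead $R_M$ is defined with the null set generated by degree-zero translates of the relations, the same computation shows the two agree. The one point to check is that $N_{U_M}$ is generated as an ideal by the homogeneous relations, which holds because $U_M$ is presented by them.

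Finally, I will combine the two steps to get $\rho_{\Z}(R_M)\cong(\rho_{\Z}(U_M))_0\cong(S_M)_0=A_M$. To confirm that the isomorphism is natural, I will check the functor-of-points description. A ring map $A_M\to R$ corresponds, by Lemma~\ref{lem:associated-ring-universal}, to a band map $R_M\to R$, namely a compatible assignment of values to degree-zero Plücker monomials.
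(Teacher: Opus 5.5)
Your argument is correct, but it takes a different route from the paper's.

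\textbf{The paper's route.} It chooses a multiplicative splitting of $1\to R_M^\times\to U_M^\times\to D_M\to 0$, which exists because $D_M$ is free. It uses this splitting to present $U_M$ as the Laurent extension of $R_M$ by $D_M$, deduces a graded isomorphism $S_M\cong\rho_{\Z}(R_M)[D_M]$, and then reads off the multidegree-zero part.

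\textbf{Your route.} You show directly that forming the associated ring commutes with taking multidegree zero. The relation ideal $I_U\subset\Z[U_M]$ is generated by homogeneous elements, so $(\rho_{\Z}(U_M))_0=\Z[R_M]/(I_U\cap\Z[R_M])$. Your monomial-translation argument then gives $I_U\cap\Z[R_M]=I_R$. The inclusion $I_R\subseteq I_U\cap\Z[R_M]$ is automatic.

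\textbf{Comparison.} Your version needs no choice of splitting, so the isomorphism is visibly canonical: it is induced by the inclusion $R_M\subset U_M$. It also makes explicit two points the paper leaves tacit. First, the null set of the sub-band must be $N_{U_M}\cap R_M^+$. Second, $N_{U_M}$ is generated by homogeneous relations, so it is a graded ideal. The paper's step of ``division by the chosen element of its common multidegree'' silently relies on exactly these facts when it identifies $U_M$ with $R_M[D_M]$.

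In exchange, the paper's route yields the stronger structural statement $S_M\cong\rho_{\Z}(R_M)[D_M]$. This parallels Lemma~\ref{lem:graded-ring}, and the same splitting is reused in the proof of Proposition~\ref{prop:degree-zero-rescaling}.

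\textbf{One small caveat.} Your closing functor-of-points check is circular as phrased. Identifying ring maps $A_M\to R$ with band maps $R_M\to R$ via Lemma~\ref{lem:associated-ring-universal} presupposes the isomorphism you are proving. It is also unnecessary: naturality is already clear, because your map is induced by $R_M\subset U_M$ and the graded isomorphism $\rho_{\Z}(U_M)\cong S_M$ sends $p_B\mapsto p_B$.
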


\begin{proof}
Let $D_M\subset G_M$ be the subgroup generated by the degrees of the basis
symbols.  Since $D_M$ is free, the exact sequence
\[
1\longrightarrow (R_M)^\times
\longrightarrow U_M^\times
\longrightarrow D_M\longrightarrow0
\]
admits a multiplicative splitting.  With such a splitting fixed, every
nonzero homogeneous element of $U_M$ can be written uniquely as a product of
a multidegree-zero element and a chosen element of its degree.  Moreover, every
defining Pl\"ucker relation is homogeneous, so division by the chosen
element of its common multidegree turns it into a defining multidegree-zero 
relation.  Thus the presentation of $U_M$ is the Laurent extension of
$R_M$ by the group $D_M$.  Applying the associated-ring functor and
using Lemma~\ref{lem:associated-ring-fixed-support} gives a graded
isomorphism
\[
        S_M\cong \rho_{\Z}(R_M)[D_M].
\]
Taking multidegree zero elements on both sides now proves the claim.
\end{proof}

\begin{definition}
\label{def:canonical-realization-scheme}
The canonical realization band scheme and its associated
ordinary scheme are
\[
        \bR_M:=\Spec R_M,
        \qquad
        \cR_M:=\Spec A_M.
\]
Thus $\cR_M$ is the base extension of $\bR_M$ from $\Fpm$ to $\Z$.
\end{definition}

\subsection{Foundations and reduced Dressians}

Let $P$ be a pasture. Two weak $P$-representations $\Delta$ and $\Delta'$ of $M$ are
\emph{rescaling equivalent} if there are units $u,t_e\in P^\times$ such
that
\[
        \Delta'(B)=u\left(\prod_{e\in B}t_e\right)\Delta(B)
        \qquad(B\in\mathcal B(M)).
\]
Let $\mathcal X_M(P)$ be the set of rescaling classes of weak $P$-representations of $M$.  By the work of
Baker--Lorscheid, this functor on pastures is represented by the foundation
$F_M$ of $M$:
\[
        \mathcal X_M(P)\cong\Hom_{\Past}(F_M,P)
\]
functorially in $P$ \cite{BL-moduli,BL-foundations}.

\begin{proposition}\label{prop:degree-zero-rescaling}
For every pasture $P$, restriction to multidegree zero induces a natural bijection
\[
        \mathcal X_M(P)
        \xrightarrow{\sim}
        \Hom_{\Band}(R_M,P).
\]
\end{proposition}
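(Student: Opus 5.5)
The plan is to use the structural decomposition already established in the proof of Lemma~\ref{lem:associated-ring-degree-zero}. There, a multiplicative splitting $s\colon D_M\to U_M^\times$ of the surjection $U_M^\times\to D_M$ was fixed. This exhibits $U_M$ as the Laurent extension of $R_M$ by $D_M$: every nonzero homogeneous element is uniquely $r\cdot s(d)$ with $r\in R_M$, and the null set of $U_M$ is generated by the multidegree-zero relations obtained by dividing each Pl\"ucker relation by $s$ of its common degree. Combining this with Lemma~\ref{lem:UM-represents-fixed-support}, weak $P$-representations of $M$ correspond to band morphisms $U_M\to P$. The map in question is $\Delta\mapsto \Delta|_{R_M}$. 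It is well defined on rescaling classes because rescaling by $(u,t_e)$ multiplies a Laurent monomial $p^a$ of multidegree $\delta_M(a)$ by the character $\chi(\delta_M(a))$, where $\chi(x,y)=u^x\prod_e t_e^{y_e}$. This factor is trivial in multidegree zero.

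For \emph{surjectivity}, take a band morphism $f\colon R_M\to P$. Extend it to $U_M$ by setting $\tilde f(r\,s(d))=f(r)$, i.e.\ send each $s(d)$ to $1$. This is multiplicative because $s$ is a homomorphism, and it preserves null relations by the Laurent-extension description of $N_{U_M}$. Hence $\tilde f$ is a weak $P$-representation restricting to $f$.

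For \emph{injectivity}, suppose $\Delta,\Delta'$ restrict to the same $f$. Define $\psi(d)=\Delta'(s(d))\Delta(s(d))^{-1}\in P^\times$. Since both restrictions agree on $R_M$, $\psi$ is a well-defined homomorphism $D_M\to P^\times$ with $\Delta'(p_B)=\psi(\mdeg p_B)\Delta(p_B)$. The remaining task is to extend $\psi$ to a character of $G_M=\Z\oplus\Z^E$, so that $\psi$ has the form $(x,y)\mapsto u^x\prod t_e^{y_e}$. This is exactly a rescaling.

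I expect this extension to be the main obstacle, since $P^\times$ need not be divisible. The extension exists if $D_M$ is a direct summand of $G_M$, i.e.\ if $G_M/D_M$ is torsion-free. I would show this by identifying $D_M$ explicitly. The argument in the lemma on the multidegrees of $W$ (for $\cT_n$) generalizes to connected $M$ via basis exchange: any two bases $B,B'$ with $B'=B-e+f$ give $e_f-e_e\in D_M$. So $D_M$ is $\{(x,y):\sum_e y_e=rx\}$, up to contributions from connected components. This is a saturated sublattice, being the kernel of the map to $\Z$ sending $(x,y)\mapsto \sum y_e-rx$, and a further such map per component for disconnected $M$. In general $D_M$ is the kernel of the map $G_M\to\Z^{c}$, $(x,y)\mapsto(\sum_{e\in E_i}y_e-r_ix)_i$ over the connected components $E_i$ of ranks $r_i$, with loops contributing coordinates $y_e=0$. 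Kernels of maps to free groups are saturated, so $G_M/D_M$ embeds in a free group and the sequence splits. Once the splitting $G_M=D_M\oplus D'$ is established, $\psi$ extends by being trivial on $D'$. Naturality in $P$ is then immediate, since restriction commutes with composition by pasture morphisms.
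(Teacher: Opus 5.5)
Your proposal is correct and follows essentially the same route as the paper: split $U_M^\times\to D_M$, extend a degree-zero morphism by sending the splitting to $1$, and for injectivity extend the ratio character $D_M\to P^\times$ to $G_M$ using that $D_M=\ker\bigl(G_M\to\Z^c\bigr)$ is a direct summand. The one step you only sketch is that exchange differences $e_f-e_e$ span all differences within a connected component; the paper handles it by taking a circuit $C$ through $e,e'$, extending $C\setminus\{e'\}$ to a basis $B$, and noting that $B-e+e'$ is again a basis.
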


\begin{proof}
Let $D_M\subset G_M$ be the subgroup generated by the degrees of the basis
symbols.  We first note that $D_M$ is a direct summand of $G_M$.  Let
$E_1,\dots,E_c$ be the connected components of $M$, and put
$r_j=\rk(M|E_j)$.  Every basis $B$ contains exactly $r_j$ elements of $E_j$,
so $D_M$ is contained in the kernel of
\[
 \psi:\Z\oplus\Z^E\longrightarrow\Z^c,
 \qquad
 (a,x)\longmapsto
 \left(\sum_{e\in E_j}x_e-r_j a\right)_{j=1}^c.
\]
Conversely, fix a basis $B_0$.  It is not hard to check that differences of incidence vectors of bases
generate all vectors $e-e'$ with $e$ and $e'$ in the same connected component
of $M$.
Indeed, if $e$ and $e'$ belong to the same connected component, then
there is a circuit $C$ containing both.  Extend the independent set
$C\setminus\{e'\}$ to a basis $B$.  Then $B-e+e'$
is also a basis, and the difference of their incidence vectors is
$e'-e$.  Hence all such differences are generated by differences of
basis incidence vectors.

Thus, if $(a,x)\in\ker(\psi)$, then
\[
        (a,x)-a \cdot \mdeg(p_{B_0})
\]
has first coordinate zero and coordinate sum zero on every component, and is
therefore an integral combination of differences of basis degrees.  Hence
$D_M=\ker(\psi)$.  The map $\psi$ is surjective (use a coordinate vector
from each $E_j$), and its target is free.  Consequently $D_M$ is saturated and
is a direct summand of $G_M$.

The multidegree map gives an exact sequence of abelian groups
\[
 0\longrightarrow (R_M)^\times
 \longrightarrow U_M^\times
 \longrightarrow D_M\longrightarrow0.
\]
Since $D_M$ is free, choose a splitting.  If
$f:R_M\to P$ is a band morphism, send the chosen homogeneous
splitting elements to $1\in P^\times$ and use $f$ on the multidegree-zero factor.
Every defining 3-term Pl\"ucker relation is homogeneous, and after division by
the splitting element of its common multidegree, it becomes a null relation in
$R_M$.  Thus the extension sends the generators of the null set
of $U_M$ to null relations in $P$, and hence is a band morphism
$U_M\to P$.  Equivalently, it defines a weak $P$-representation of $M$.

Two such extensions have the same restriction to multidegree zero precisely when
their ratio is a character $D_M\to P^\times$.  Because $D_M$ is a direct
summand of $G_M$, this character extends to $G_M$.  A character of
$G_M=\Z\oplus\Z^E$ is exactly the choice of a global scalar and independent
column scalars, so the two representations are rescaling equivalent.
Conversely, every rescaling character is trivial on multidegree-zero elements.
This proves the claimed bijection.
\end{proof}

Proposition~\ref{prop:degree-zero-rescaling} explains the compatibility of our notation with Part~\ref{part:field}:
when $k$ is a field,
\[
R_M(k)
\cong
\Hom_{\Band}(R_M,k).
\]
Thus the set denoted by $R_M(k)$ in Part~\ref{part:field} is the
set of $k$-points of the realization band.

\begin{corollary}\label{cor:foundation-comparison}
For every pasture $P$, there is a natural bijection
\[
 \Hom_{\Band}(R_M,P)
 \cong
 \Hom_{\Past}(F_M,P).
\]
Equivalently, $F_M$ represents the restriction of the functor of points of
$\bR_M$ to the full subcategory of pastures.
\end{corollary}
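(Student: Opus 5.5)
The plan is to compose the two natural bijections already established, one on the band side and one on the pasture side. Fix a pasture $P$.

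First, Proposition~\ref{prop:degree-zero-rescaling} gives a bijection $\mathcal X_M(P)\xrightarrow{\sim}\Hom_{\Band}(R_M,P)$, natural in $P$. This is induced by restricting a band morphism $U_M\to P$, equivalently a weak $P$-representation (Lemma~\ref{lem:UM-represents-fixed-support}), to the multidegree-zero sub-band. Second, the Baker--Lorscheid representability theorem gives a bijection $\mathcal X_M(P)\cong\Hom_{\Past}(F_M,P)$, also natural in $P$.

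Inverting the first bijection and composing with the second yields
\[
\Hom_{\Band}(R_M,P)\cong\mathcal X_M(P)\cong\Hom_{\Past}(F_M,P).
\]
This composite is natural in $P$, since both bijections are natural transformations of functors on $\Past$.

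The only point needing care is naturality of the first bijection with respect to morphisms of pastures $f\colon P\to P'$, viewed as band morphisms through the fully faithful inclusion $\Past\hookrightarrow\Band$. Pushing a weak representation forward along $f$ and then restricting to multidegree zero is the same as restricting first and then composing with $f$. Rescaling classes are also preserved, because $f$ maps $P^\times$ to $P'^\times$. So the square commutes.

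For the "equivalently" clause, note that the functor of points of $\bR_M=\Spec R_M$ sends a band $B$ to $\Hom_{\Band}(R_M,B)$. Restricted to the full subcategory of pastures, this is exactly the left-hand side above, so the displayed bijection says $F_M$ represents this restriction. I expect no real obstacle; the substance lies entirely in Proposition~\ref{prop:degree-zero-rescaling} and in the cited representability of $\mathcal X_M$ by $F_M$.
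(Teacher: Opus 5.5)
Your proposal is correct and follows the paper's own argument. The paper's proof is the one-line observation that both $\Hom_{\Band}(R_M,-)$ and $\Hom_{\Past}(F_M,-)$ represent $\mathcal X_M$ on pastures, via Proposition~\ref{prop:degree-zero-rescaling} and the Baker--Lorscheid representability theorem; your explicit naturality check and your unpacking of the ``equivalently'' clause simply spell out details the paper leaves implicit.
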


\begin{proof}
Both sides represent the functor $\mathcal X_M$ on pastures.
\end{proof}

Let $\T$ denote the tropical hyperfield: its underlying set is
$\mathbb R_{\ge0}$, its multiplication is the usual one, and its hyperaddition is
\[
a\boxplus b=
\begin{cases}
\max\{a,b\},&a\ne b,\\
[0,a],&a=b.
\end{cases}
\]
As an application of the preceding comparison, we identify the $\T$-points of the realization band scheme with the reduced Dressian.

\begin{corollary}\label{cor:reduced-dressian}
There is a canonical identification
\[
        \bR_M(\T)
        =\Hom_{\Band}(R_M,\T)
        \cong\underline{\operatorname{Dr}}(M),
\]
where the right-hand side is the reduced Dressian of $M$, i.e., the set of
valuated matroids with underlying matroid $M$ modulo rescaling.
\end{corollary}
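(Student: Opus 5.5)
The plan is to reduce the statement to Corollary~\ref{cor:foundation-comparison} together with the known description of the $\T$-points of the foundation. First, observe that $\T$ is a hyperfield, hence a pasture. Proposition~\ref{prop:degree-zero-rescaling} applied with $P=\T$ then gives a natural bijection between $\Hom_{\Band}(R_M,\T)$ and $\mathcal X_M(\T)$, the set of rescaling classes of weak $\T$-representations of $M$. So it remains to identify $\mathcal X_M(\T)$ with the reduced Dressian.

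Next, I will identify weak $\T$-representations with support $\mathcal B(M)$ with valuated matroids whose underlying matroid is $M$. By Lemma~\ref{lem:UM-represents-fixed-support}, such a representation is a function $\Delta:\binom Er\to\T$ with $\Delta(B)\neq0$ exactly for $B\in\mathcal B(M)$, satisfying the three-term Pl\"ucker relations in $\T$. Since $\T$ has trivial sign ($-1=1$), each relation says that the maximum of $\Delta(Iij)\Delta(Ikl)$, $\Delta(Iik)\Delta(Ijl)$, $\Delta(Iil)\Delta(Ijk)$ is attained at least twice. Passing to $v=-\log\Delta$ turns this into the three-term tropical Pl\"ucker relations. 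The key input here is the known fact that $\T$ is perfect, so weak and strong $\T$-matroids coincide; see \cite{BB}. It follows that the three-term relations already imply all tropical Pl\"ucker relations, and $\Delta$ is precisely a valuated matroid with support $M$. This is the step I expect to be the main obstacle, since it is the one nontrivial matroid-theoretic ingredient. If citing perfection is not acceptable, I would fall back on the classical result, due to Dress--Wenzel, that for fixed support the local (three-term) tropical Pl\"ucker relations imply the global ones.

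Finally, I will check that the two notions of equivalence agree. Rescaling equivalence over $\T$ multiplies $\Delta(B)$ by $u\prod_{e\in B}t_e$ with $u,t_e\in\mathbb R_{>0}$. After taking logarithms, this is exactly adding a global constant and a linear function $\sum_{e\in B}\tau_e$, which is the equivalence used to define $\underline{\operatorname{Dr}}(M)$. Composing the bijections gives the identification. Naturality and canonicity follow because every map involved is canonical: restriction to multidegree zero in Proposition~\ref{prop:degree-zero-rescaling} and $-\log$ on $\T$. Equivalently, one can phrase the whole argument via Corollary~\ref{cor:foundation-comparison}, by quoting the Baker--Lorscheid identification of $\Hom(F_M,\T)$ with the reduced Dressian.
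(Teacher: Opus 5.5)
Your proposal is correct and follows essentially the paper's argument: by perfection of $\T$ \cite{BB}, weak $\T$-representations with support $\mathcal B(M)$ are exactly valuated matroids, and this is combined with the identification of $\Hom_{\Band}(R_M,-)$ with rescaling classes. The only difference is that you apply Proposition~\ref{prop:degree-zero-rescaling} directly with $P=\T$ and check by hand that the two rescaling equivalences agree. The paper instead routes through the foundation via Corollary~\ref{cor:foundation-comparison}, citing \cite{BL-moduli,BL-Lafforgue} for $\Hom_{\Past}(F_M,\T)$, so your version is slightly more self-contained because it never needs $F_M$.
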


\begin{proof}
Weak matroids over the tropical hyperfield are the same thing as valuated matroids in the sense of Dress--Wenzel, cf.
\cite{BB}.  The foundation construction identifies
$\Hom_{\Past}(F_M,\T)$ with their rescaling classes
\cite{BL-moduli,BL-Lafforgue}.  Now apply
Corollary~\ref{cor:foundation-comparison}.
\end{proof}

\begin{remark}[Intrinsic tropicalization]\label{rem:intrinsic-tropicalization}
We define the \emph{intrinsic tropicalization} of the realization space associated to a matroid $M$ by
\[
        \operatorname{Trop}(\cR_M)
        :=\bR_M(\T).
\]
Corollary~\ref{cor:reduced-dressian} identifies the right-hand side canonically with
$\underline{\operatorname{Dr}}(M)$.  
\end{remark}

In particular, Theorem~\ref{thm:main} and
Remark~\ref{rem:intrinsic-tropicalization} endow $X_1(n)^\circ$ with the
intrinsic tropicalization $\bR_{\cT_n}(\T)$; the geometry of $\underline{\operatorname{Dr}}(\cT_n)$ (especially when $n=p$ is prime) 
will be explored in \cite{BES}.

\end{document}